\documentclass[10pt]{amsart}
\usepackage{geometry}
\usepackage[foot]{amsaddr}
\usepackage{amssymb,amsopn,bm}
\usepackage{graphicx,float}
\usepackage{enumerate,array,multirow,bbm,xcolor}
\usepackage{aliascnt,stmaryrd}

\newcommand{\R}{\mathbb{R}}
\newcommand{\C}{\mathbb{C}}
\newcommand{\Z}{\mathbb{Z}}
\newcommand{\Stau}{S_\tau}
\newcommand{\vxi}{{\bm{\xi}}}
\newcommand{\vx}{{\bm{x}}}

\newcommand{\rmd}{\mathrm{d}}
\newcommand{\vep}{\varepsilon}
\newcommand{\vphi}{\varphi}

\DeclareMathOperator*{\esssup}{ess\,sup}

\usepackage[hidelinks]{hyperref}
\usepackage[capitalize,nameinlink,noabbrev]{cleveref}
\crefname{equation}{}{}

\theoremstyle{plain}
\newtheorem{theorem}{Theorem}[section]

\newaliascnt{lemma}{theorem}
\newtheorem{lemma}[lemma]{Lemma}
\aliascntresetthe{lemma}

\newaliascnt{proposition}{theorem}
\newtheorem{proposition}[proposition]{Proposition}
\aliascntresetthe{proposition}

\newaliascnt{corollary}{theorem}

\aliascntresetthe{corollary}

\theoremstyle{remark}
\newaliascnt{remark}{theorem}
\newtheorem{remark}[remark]{Remark}
\aliascntresetthe{remark}

\numberwithin{equation}{section}
\numberwithin{figure}{section}
\numberwithin{table}{section}

\begin{document}

	
	\title[EWI for NLSE with highly singular potential]{Optimal error bounds on the exponential wave integrator for nonlinear Schr\"odinger equations with highly singular potential}
	
	\author[W. Bao]{Weizhu Bao}
	\address{Department of Mathematics, National University of Singapore, Singapore 119076}
	\email{matbaowz@nus.edu.sg}
	
	\author[C. Wang]{Chushan Wang}
	\address{Department of Statistics and Committee on Computational and Applied Mathematics, University of Chicago, Chicago, IL 60637}
	\email{chushanwang@uchicago.edu}
	
	\author[Y. Wu]{Yifei Wu}
	\address{School of Mathematical Sciences, Nanjing Normal University, Nanjing 210046, China}
	\email{yerfmath@gmail.com}
	
	\begin{abstract}
		We establish error estimates of the first-order exponential wave integrator (EWI) for the nonlinear Schr\"odinger equation (NLSE) with a highly singular potential in $\R^d$ with $1 \leq d \leq 3$. Our results deal with singular potentials in $L^p_\text{\rm loc}(\R^d)$ with $p > \frac{d}{2}$ and $p \geq 1$, which is (almost) the weakest regularity of the potential required by the well-posedness of the NLSE. First, for $L^p_\text{loc}$-potentials with $p>2$, we establish an optimal first-order $L^2$-norm convergence for the EWI, with the convergence order slightly reduced to $1^-$ when $p=2$. To the best of our knowledge, the optimal first-order convergence for the three-dimensional $L^2$-potential is for the first time in the literature. The optimality of such an error bound is two-fold: (i) the first-order $L^2$-norm convergence is optimal for the EWI (and its higher-order versions) under the given $L^2$-regularity assumption on the potential, and (ii) to achieve the first-order $L^2$-norm convergence for the EWI, such an assumption is optimally weak. For more singular potentials in $L^p_\text{\rm loc}(\R^d)$ with $\frac{d}{2} < p < 2$ and $p \geq 1$, we prove that the $L^2$-norm convergence is (almost) of $(1 - \alpha)$-order when $d=1, 2$, and of $(1 - \frac{3}{2}\alpha)$-order when $d=3$, where $\alpha := d(1/p - 1/2)$ when $d =1,2,3$, $p>1$ and $\alpha := \frac{1}{2}^+$ when $d=1$, $p=1$. Notably, this result pushes the error estimate to the threshold regularity of the potential that matches the threshold regularity for the well-posedness of the NLSE, which is also for the first time. Two main ingredients are adopted in the proof: (i) the use of discrete space-time Lebesgue spaces together with discrete Strichartz estimates to establish the stability of the numerical scheme, and (ii) the use of normal form transformation and frequency decompositions to obtain optimal error bounds. 
	\end{abstract}
	
	\maketitle
	
	\textbf{Key words.} nonlinear Schr\"odinger equation, exponential wave integrator, singular potential, discrete Strichartz estimate, normal form transformation, optimal error bound
	
	\vspace{0.5em}
	
	\textbf{MSC codes.} 35Q55, 65M15, 65M70, 81-08  
	
	\section{Introduction}
	We consider the Cauchy problem of the following (nonlinear) Schr\"odinger equation (NLSE) with a singular potential in $\R^d$ with $1 \leq d \leq 3$: 
	\begin{equation}\label{NLSE}
		\left\{
		\begin{aligned}
			&i \partial_t \psi(\vx, t) = -\Delta \psi(\vx, t) + V(\vx) \psi(\vx, t) + \beta |\psi(\vx, t)|^2\psi(\vx, t), \quad t>0, \quad \vx \in \R^d, \\
			&\psi(\vx, 0) = \psi_0(\vx), \quad \vx \in \R^d, 
		\end{aligned}
		\right.
	\end{equation}
	where $V = V(\vx)$ is a real-valued potential that may be singular and unbounded and $\beta \in \R$ is a given constant. We make the following assumptions on the potential $V$:  
	\begin{equation}\label{eq:V_assumption}
		V \in L^p(\R^d) + L^\infty(\R^d) := \{ aV_1 + bV_2 \,|\, V_1 \in L^p(\R^d), \ V_2 \in L^\infty(\R^d), \ a, b \in \R \}, \quad  \frac{d}{2} < p < \infty, \quad p \geq 1. 
	\end{equation}
	Here, $p = \frac{d}{2}$ is a critical regularity of the potential $V$ in the sense of scaling: under the natural scaling of the Schr\"odinger equation $V(\vx) \rightarrow \lambda^2 V(\lambda \vx)$, the $L^\frac{d}{2}$-norm is preserved. It is also a critical regularity of potential such that the potential $V$ as a multiplication operator is relatively (form) bounded with respect to the Laplacian \cite{book_Teschl}. This fact also plays an essential role in our error estimates for the three-dimensional (3D) case. Notably, \cref{eq:V_assumption} is (up to the end-points) the weakest regularity that can ensure the well-posedness of the NLSE \cref{NLSE} \cite{wu2025_singular1d,wu2025_singularhd}. We also remark here that we focus on the singular potential with local singularity, which is locally unbounded. This is different from the works in the literature concerning smooth confining potentials that are unbounded at infinity \cite{remi2025,splitting_singular_focm2025}. 
	
	The (nonlinear) Schr\"odinger equation arises from various physical applications, such as quantum physics and chemistry, laser beam propagation, deep water waves, plasma physics, and Bose-Einstein condensates \cite{BEC_book,book_NLSE_Sulem,book_NLSE_Gadi,review_2013,ESY}. In many of these applications, the potential $V$ may be singular or very rough. The most fundamental and important example is the Coulomb potential in 3D with $V(\vx) = -\frac{Z}{|\vx|}$ for some $Z > 0$, which models the Coulomb force at the quantum mechanical level \cite{SE}. Other types of singular or rough potential also arise frequently in quantum mechanics \cite{singular_poten_1964,singular_poten_PR,singular_poten_RevModPhys,defect}, and in the model of wave propagation through disordered media \cite{Anderson,disorder_science,anderson_transverse}. 
	
	In addition to its strong physical relevance, the NLSE \cref{NLSE} with singular potential itself is of significant mathematical interest, presenting substantial challenges in both the analytical study and the numerical approximation. From the analytical point of view, a fundamental question is, under sufficient smooth initial data $\psi_0$, how does the regularity of the potential influences the regularity of the solution. In \cite{cazenave2003}, it is proved that when $V \in L^2(\R^d) + L^\infty(\R^d)$, the NLSE \cref{NLSE} is locally well-posed in $H^2$. Very recently, sharp well-posedness results are obtained, which give a complete characterization of the regularity of the solution under general singular potentials on the whole space $\R^d $ \cite{wu2025_singular1d,wu2025_singularhd} and on the one-dimensional torus \cite{zhao2024}. To be precise, it is proved that for singular potential $V$ satisfying \cref{eq:V_assumption}, the NLSE \cref{NLSE} is sharply well-posed in $H^{2-\alpha}$, where $\alpha = \alpha(p)$ is defined as
	\begin{equation}\label{eq:alpha_def}
		\alpha := \left\{
		\begin{aligned}
			&d\left( \frac{1}{p} - \frac{1}{2} \right), && \frac{d}{2} < p \leq 2, \  d=2, 3, \\
			&\frac{1}{p} - \frac{1}{2}, && 1 < p \leq 2, \  d=1, \\
			&\frac{1}{2}^+, && p = 1, \  d = 1,  
		\end{aligned}
		\right.
		\quad
		\text{ and we note that }   
		\alpha \in \left\{
		\begin{aligned}
			&[0, \frac{1}{2}^+], &&d=1, \\
			&[0, 1), &&d=2, \\
			&[0, \frac{1}{2}), && d=3. 
		\end{aligned}
		\right.
	\end{equation}
	The sharpness lies in the fact that $H^{2-\alpha}$ is the maximal regularity that can be attained by the solution under an $L^p_\text{\rm loc}$-potential. Moreover, it is proved that the equation is ill-posed in any $H^s$-space $(s \in \R)$ for $L^p_{\rm loc}$-potential with $p<\frac{d}{2}$ or $p<1$. We remark here that as $2-\alpha > d/2$, we always have $\psi(t) \in L^\infty(\R^d)$. 
	
	Although much progress has been made in the analytical study of the NLSE with singular potential, most of the numerical theories of the NLSE are still restricted to bounded and sufficiently smooth potentials (usually assumed to be $H^2$ or $H^1$) \cite{jahnke2000,bao2023_semi_smooth,tree2,ExpInt,splitting_acta}. Some recent works have successfully weakened the regularity requirement on the potential to $L^\infty$, allowing for rough but bounded potentials \cite{henning2017,bao2023_EWI,bao2023_improved,bao2023_sEWI,bao2024,lin2024}. In particular, under the assumption of $L^\infty$-potential, the optimal error bounds have been established at the full-discretization level, for some standard time integrators, such as the time-splitting methods \cite{bao2023_improved} and the exponential wave integrators (EWI) \cite{bao2023_EWI,bao2023_sEWI}, coupled with Fourier spectral based spatial discretizations \cite{bao2024,lin2024}. However, the extension of these results to the singular case for $L^p_{\rm loc}$-potential with $p < \infty$ is not straightforward, and the existing results on singular potentials are quite limited. 
	For the Schr\"odinger equation (i.e. \cref{NLSE} with $\beta = 0$) with Coulomb potential in 3D (which is in $L^{3^-}_{\rm loc}$), the time-splitting method is analyzed, for the eigenstates initial data in \cite{PRR} and for general initial data in \cite{splitting_singular_focm2025,fang2025}, based on commutator estimates \cite{jahnke2000,lubich2008}. Their results show that (roughly) $\frac{1}{4}$-order convergence in $L^2$-norm can be achieved by time-splitting methods under 3D Coulomb potential, and the convergence order deteriorates to $0$ as the potential becomes more singular, e.g., $ V \in L_{\rm loc}^2$. Compared to the known $H^2$-regularity of the exact solution, such convergence order, though optimal (at the time semidiscretization level) as confirmed by the numerical results, is rather low. In \cite{zhao2024}, a novel exponential-type integrator is proposed and analyzed for the one-dimensional (1D) NLSE on the torus with (highly) singular potential that may not even lie in $L^2_{\rm loc}$. Owing to the special construction of the method, they are able to prove in the $L^2$-norm $\frac{3}{4}$-order convergence under $L_{\rm loc}^2$-potential and $\frac{1}{4}$-order convergence under $L_{\rm loc}^1$-potential. Very recently, with the use of discrete Strichartz estimates, a filtered version of the standard EWI (the filtered exponential Euler method) is analyzed in \cite{bao2025_singular}, and (nearly) first-order $L^2$-norm convergence is obtained for singular potentials that lie in $L^2_{\rm loc}$ in 1D and 2D, and $L_{\rm loc}^{\frac{7}{4}}$ in 3D (hence covers the 3D Coulomb case), and the convergence order reduced to $\frac{3}{4}$-order for $L^2_{\rm loc}$ potentials in 3D. Such convergence estimates have significantly improved the existing results, indicating that the standard exponential integrators would have significant advantages in simulating the NLSE with singular potentials. Interestingly, however, the convergence order reduction in 3D for $L^2_{\rm loc}$-potentials is not observed in the numerical experiments \cite{bao2025_singular}. 
	
	Considering the significantly better performance of the EWI in both the theoretical analysis and the practical computations, in this work, we continue to focus on the (filtered) exponential Euler method: 
	Choosing $\tau > 0$ to be the time step size, and denoting the time steps as $t_n = n \tau$ for $n = 0, 1, \cdots$,  with $\psi_\tau^n$ being the approximation to $\psi(\cdot, t_n)$, the filtered EWI reads \cite{bao2025_singular}
	\begin{equation}\label{eq:EWI_scheme}
		\begin{aligned}
			&\psi_\tau^{n+1} = e^{i \tau \Delta} \psi^n_\tau - i \tau \vphi_1(i \tau \Delta) \Pi_\tau (V \psi_\tau^n + \beta |\psi_\tau^n|^2 \psi_\tau^n), \quad n \geq 0, \\
			&\psi^0_\tau = \Pi_\tau \psi_0, 
		\end{aligned}
	\end{equation}
	where $\vphi_1(z) = \frac{e^z - 1}{z}$ for $z \in \C$, and $\vphi_1(i \tau \Delta)$ is the Fourier multiplier with symbol $\vphi_1(- i \tau |\vxi|^2)$, and $\Pi_\tau$ is a smooth filter function in the Fourier space defined in \cref{eq:Pitau_def}. 
	
	Our first contribution is an improvement of the error analysis on \cref{eq:EWI_scheme} in \cite{bao2025_singular} for $L^p_\text{\rm loc}$-potentials with $p>2$, extending the optimal first-order $L^2$-norm convergence to 3D. Choose a final time $0<T<T_\text{max}$ with $T_\text{max}$ being the maximal existing time of the exact solution to \cref{NLSE}. 
	
	\begin{theorem}\label{thm:main1}
		Assume that $V \in L^{p}(\R^d) + L^\infty(\R^d)$ with $p > 2 $ and $\psi_0 \in H^2(\R^d)$ for $d = 1, 2, 3$. There exists $\tau_0>0$ sufficiently small such that when $0<\tau\leq\tau_0$, we have
		\begin{equation*}
			\| \psi(t_n) - \psi^n_\tau \|_{L^2} \lesssim \tau, \qquad 0 \leq n \leq T/\tau. 
		\end{equation*}
	\end{theorem}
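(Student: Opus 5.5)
We compare the scheme \cref{eq:EWI_scheme} with Duhamel's formula and split the $L^2$-error $e^n := \psi(t_n) - \psi^n_\tau$ into a local truncation part and a stability part. From the exact solution,
\begin{equation*}
\psi(t_{n+1}) = e^{i\tau\Delta}\psi(t_n) - i\int_0^\tau e^{i(\tau-s)\Delta}\bigl(V\psi + \beta|\psi|^2\psi\bigr)(t_n+s)\,\rmd s .
\end{equation*}
We therefore write
\begin{equation*}
e^{n+1} = e^{i\tau\Delta}e^n - i\tau\vphi_1(i\tau\Delta)\Pi_\tau\bigl(V e^n + \beta(|\psi|^2\psi(t_n) - |\psi^n_\tau|^2\psi^n_\tau)\bigr) + \mathcal{L}^n,
\end{equation*}
where $\mathcal{L}^n$ collects three terms: the filter error $(1-\Pi_\tau)(\cdots)$, the freezing error coming from replacing $F(t_n+s)$ by $F(t_n)$, and the phase error coming from replacing $e^{i(\tau-s)\Delta}$ by $\vphi_1$.

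\textbf{Stability.} Unrolling the recursion gives a discrete Duhamel sum $\sum_k \tau e^{i(n-k)\tau\Delta}\vphi_1\Pi_\tau(\cdot)$. We control it with the discrete Strichartz estimates for $e^{ik\tau\Delta}\Pi_\tau$ in the discrete space-time norms $\ell^q_\tau L^r$, which we take from \cite{bao2025_singular}. The key term is $V e^n$ with $V = V_1 + V_2$, $V_1 \in L^p$ and $p>2$. H\"older gives $\|V_1 e\|_{L^{r'}} \le \|V_1\|_{L^p}\|e\|_{L^r}$ with $1/r' = 1/p + 1/r$. Since $p>2$, $r$ is strictly subcritical, so we can pick an admissible pair and gain a small power $T^\theta$ from H\"older in time. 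The cubic term is handled the same way, using $\psi\in L^\infty$ (from $H^2$) and an a priori bound on $\|\psi^n_\tau\|_{L^\infty}$ obtained by induction or bootstrap. Combined with discrete Gronwall on short intervals, this gives
\begin{equation*}
\|e\|_{\ell^\infty L^2} \lesssim \Bigl\|\sum_k e^{i(n-k)\tau\Delta}\mathcal{L}^k\Bigr\|_{\ell^\infty L^2},
\end{equation*}
so it suffices to bound the accumulated local errors by $O(\tau)$ in a dual Strichartz norm.

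\textbf{Local errors.} The filter term is $O(\tau)$ because $\Pi_\tau$ cuts frequencies at $\tau^{-1/2}$, and $V\psi$ is controlled in $H^{-\sigma}$-type norms. The phase error $(e^{i(\tau-s)\Delta} - \vphi_1)$ costs one derivative per power of $\tau$, i.e. it needs $V\psi\in H^2$, which fails. The standard estimate gives only $\tau^{1-\alpha'}$ locally, with $\alpha'$ the loss from $V\psi\in L^2$ but not $H^\epsilon$. Here we use the normal form transformation. We write the phase error as $\int_0^\tau e^{i(\tau-s)\Delta}(1-e^{is\Delta})\ldots$ and move to the profile $e^{-it\Delta}\psi$. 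The time oscillation $e^{-it_k\Delta}$ summed over $k$ yields, after summation by parts in $k$ (discrete integration by parts in time), a gain of $\Delta^{-1}$ that compensates the derivatives. The remaining commutator terms involve $\partial_t\psi \in L^2$ and $\partial_t(V\psi) = V\partial_t\psi$, which are again controlled by Strichartz, since $\partial_t\psi\in L^\infty_tL^2\cap$ Strichartz spaces. The freezing error is handled the same way, because it equals $\int (V(\psi(t_n+s)-\psi(t_n)))$ and $\psi(t_n+s)-\psi(t_n) = O(s)$ in $L^2$.

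\textbf{Main obstacle.} The hardest step is the phase error for $V\psi$ in 3D. There $V\psi$ has essentially no positive Sobolev regularity, and summation by parts must be done with a frequency decomposition: low frequencies $|\vxi|^2\tau\lesssim 1$, where Taylor expansion of $\vphi_1$ suffices, and high frequencies, where we divide by the phase. Only after this split is the resulting bound summable to exactly $O(\tau)$ rather than $O(\tau^{1-})$. The condition $p>2$ is what lets us place $V\psi$ in a dual Strichartz space with room to spare; at $p=2$ this produces a logarithmic loss, consistent with the $1^-$ statement.
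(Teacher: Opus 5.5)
You place the difficulty in a term that is identically zero, and you skip the term that actually carries it.

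\textbf{There is no phase error.} For \cref{eq:EWI_scheme} you have $\tau\vphi_1(i\tau\Delta)=\int_0^\tau e^{i(\tau-s)\Delta}\,\rmd s$ exactly. Comparing with Duhamel's formula therefore leaves only the freezing error $\int_0^\tau e^{i(\tau-s)\Delta}\Pi_\tau\bigl(V(\psi(t_n+s)-\psi(t_n))\bigr)\,\rmd s$. The paper bounds this directly in \cref{prop:E2}, using dual Strichartz together with the space--time bound on $\langle\nabla\rangle^{-\alpha}\partial_t\psi$ in $L^{q_0}L^{r_0}$ from \cref{eq:regularity}. Note that ``$O(s)$ in $L^2$'' is not enough here: since $V_1\in L^p$, the increment must be measured in $L^{r_0}$ with $r_0>2$. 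No normal form or summation by parts is involved in this step. Your ``main obstacle'', and the frequency-split summation by parts you design for it, address nothing.

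\textbf{The real obstacle is the filtration term, and your proposal does not handle it.} That term is the interaction of $V_1$ with the high frequencies of the exact solution, $\int_0^{t_n}\Stau(t_n-s)\Pi_\tau\bigl(V_1(I-\Pi_\tau)\psi(s)\bigr)\,\rmd s$.

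Your justification, that $V\psi$ is controlled in $H^{-\sigma}$-type norms, gives no decay of $(1-\Pi_\tau)(V\psi)$ at all. With your unfiltered error $e^n=\psi(t_n)-\psi^n_\tau$, this term is hidden inside your stability term $\Pi_\tau(Ve^n)$, because $e^n$ contains $(I-\Pi_\tau)\psi(t_n)$. Closing your stability argument then requires $\|e\|_{\ell^{q_0}_\tau L^{r_0}}=O(\tau)$, with $r_0\approx 3$ in 3D when $p$ is near $2$. But Sobolev only gives $\|(I-\Pi_\tau)\psi\|_{L^{r_0}}\lesssim N_\tau^{-3/2}\|\psi\|_{H^2}\sim\tau^{3/4}$. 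That is exactly the earlier $\tau^{3/4}$ bound this theorem improves. Also, the discrete Strichartz estimates require the frequency localization in $\Stau$, so they do not apply to the unfiltered $e^n$ directly.

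In 1D, in 2D, and in 3D with $p\ge 3$, the dual pair $(2p/d,\,2p/(p-2))$ is admissible, and direct Strichartz does give $O(\tau)$. For $2<p<3$ in 3D it is not admissible, and that is the case the theorem has to handle.

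\textbf{The missing idea is \cref{prop:R1}.} The steps are:
\begin{enumerate}[(i)]
\item Work with the filtered error $\Pi_\tau\psi(t_n)-\psi^n_\tau$ in $X_\tau$ and isolate $R=\int\Stau\Pi_\tau(V_1P_{>N}\psi)$.
\item Write $P_{>N}\psi=|\nabla|^{-2}P_{>N}(-\Delta\psi)$ and substitute the equation $-\Delta\psi=i\partial_t\psi-V\psi$. The $\partial_t\psi$ and $V_2\psi$ parts are then $O(N^{-2})$ by the Strichartz bounds on $\partial_t\psi$.
\item For the quadratic part $V_1|\nabla|^{-2}P_{>N}(V_1\psi)$, decompose by output frequency $P_M$ and split $\psi=P_{\gg M}\psi+P_{\lesssim M}\psi$.
\item Sum the high part using Bernstein and the $H^2$ bound.
\item Treat the low part by a partial normal form: integration by parts in continuous time, gaining $|\nabla|^{-2}\sim M^{-2}$ on the output. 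This produces boundary terms and a $\partial_t\psi$ term, each $O(N^{-2})$.
\end{enumerate}
So the normal form in the paper acts on the spatial filtration error in continuous time, not on a temporal error via discrete summation by parts.
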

	When $d=3$, the above result improves the previous $\frac{3}{4}$-order convergence to optimal first order. The optimality of the first-order $L^2$-norm convergence is two-fold: (i) the convergence order is optimal for the EWI under the assumption of $L_{\rm loc}^2$-potentials and (ii) the assumption of $L_{\rm loc}^2$-regularity of the potential is the weakest regularity for the EWI to achieve first-order convergence due to the aforementioned sharp $H^2$ well-posedness result. 
	
	We further extend the error analysis to highly singular potentials in $L^p_{\rm loc}$ with $d/2 < p \leq 2$ and $ p \geq 1$. 
	\begin{theorem}\label{thm:main2}
		Assume that $V \in L^{p}(\R^d) + L^\infty(\R^d)$ with $ d/2 < p \leq 2, \ p \geq 1$ and $\psi_0 \in H^{2 - \alpha}(\R^d)$ with $\alpha$ given in \cref{eq:alpha_def} for $d=1, 2, 3$. There exists $\tau_0>0$ sufficiently small such that when $0<\tau\leq\tau_0$, we have
		\begin{equation*}
			\| \psi(t_n) - \psi^n_\tau \|_{L^2} \lesssim
			\left\{
			\begin{aligned}
				&\tau^{1 - \alpha}, && d=1, \  1 < p \leq 2,  \\
				&\tau^{\frac{1}{2}^-}, && d=1, \  p = 1, \\
				&\tau^{1^- - \alpha}, &&d=2, \ 1 < p \leq 2, \\
				&\tau^{1^- - \frac{3}{2}\alpha}, &&d=3, \ \frac{3}{2} < p \leq 2,  
			\end{aligned}
			\right.
			\qquad 0 \leq n \leq T/\tau.  
		\end{equation*} 
	\end{theorem}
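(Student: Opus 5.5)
The plan is to follow the same architecture as for \cref{thm:main1}: a stability estimate in discrete Strichartz spaces, plus a local error analysis sharpened by a normal form transformation, now with the regularity of $\psi$ lowered to $H^{2-\alpha}$. We start from the well-posedness theory \cite{wu2025_singular1d,wu2025_singularhd}, which gives $\psi \in C([0,T];H^{2-\alpha})$ together with Strichartz-type space-time control. We split $V = V_1 + V_2$ with $V_1 \in L^p$ and $V_2 \in L^\infty$; the bounded part $V_2$ is handled exactly as in \cite{bao2023_EWI,bao2025_singular}. Writing $e^n = \psi(t_n) - \psi^n_\tau$, Duhamel's formula gives
\[
e^{n+1} = e^{i\tau\Delta} e^n - i\tau\vphi_1(i\tau\Delta)\Pi_\tau\big(V e^n + \beta(|\psi(t_n)|^2\psi(t_n) - |\psi^n_\tau|^2\psi^n_\tau)\big) + \mathcal{L}^n .
\]
Here $\mathcal{L}^n$ is the local truncation error, which collects three contributions: the filter error $(1-\Pi_\tau)$, the quadrature error from freezing $e^{-is\Delta}(V\psi(t_n+s))$ at $s=0$, and the time variation of $\psi$.

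The first step is stability. We sum the error recursion as a discrete Duhamel series and apply the discrete Strichartz estimates for $e^{i\tau\Delta}\Pi_\tau$ on the filtered frequency range $|\vxi| \lesssim \tau^{-1/2}$. The goal is
\[
\|e^n\|_{l^\infty L^2} + \|e^n\|_{l^q L^r} \lesssim \Big\|\sum_k e^{i(n-k)\tau\Delta}\mathcal{L}^k\Big\|_{l^\infty L^2 \cap l^q L^r},
\]
at least on short time intervals, which we then iterate up to $T$. The potential term is placed in a dual Strichartz space using H\"older: $\|V_1 e\|_{L^{r'}} \le \|V_1\|_{L^p}\|e\|_{L^r}$ with $1/r' = 1/p + 1/r$. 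This is admissible precisely because $p > d/2$, with short intervals supplying the smallness. The nonlinearity is controlled using an a priori $L^\infty$ bound on $\psi^n_\tau$, obtained by induction through the inverse estimate on the filtered range (since $2-\alpha > d/2$).

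The second step, and the main one, is the local error; the dominant term is
\[
\int_0^\tau e^{i(\tau-s)\Delta}\big(e^{is\Delta}-1\big)\big[V\psi(t_n)\big]\,ds ,
\]
up to the filter. A naive estimate would require $V\psi \in H^2$, which fails. Instead we perform a dyadic frequency decomposition of $V\psi(t_n)$, and also of $\psi$, with a Littlewood--Paley cutoff at $|\vxi| \sim \tau^{-1/2}$.
\begin{itemize}
\item For the low frequencies, we use $|e^{is|\vxi|^2}-1| \lesssim s|\vxi|^2$, which gains powers of $\tau$.
\item For the high frequencies, we write $e^{is\Delta}$ as a time derivative of itself divided by the symbol. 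This is the normal form step: it converts the oscillatory sum over $n$ into a telescoping sum plus boundary terms, and the boundary terms can be absorbed after summation in $n$ through the discrete Strichartz norms.
\end{itemize}
The key regularity input is $V\psi \in H^{-\alpha}$-type control, or more precisely $V_1\psi \in L^p$ embedding into $H^{-\alpha}$ with $\alpha = d(1/p-1/2)$. This loses $\tau^{\alpha}$ relative to the $H^2$ case, which yields $\tau^{1-\alpha}$ in 1D. In 2D, the failure of the endpoint Strichartz estimate costs a further $\tau^{0^-}$. The filter error $(1-\Pi_\tau)\psi$ is $O(\tau^{(2-\alpha)/2}) = O(\tau^{1-\alpha/2})$, which is harmless.

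The hardest part will be 3D. There the Sobolev embedding used to put $V_1 e$ and the high-frequency part of $V\psi$ into Strichartz-dual spaces costs an additional $\tau^{\alpha/2}$. We will try to recover it using the relative form boundedness of $V$ with respect to $-\Delta$ (valid since $p > d/2$), treating the error in an $H^{-1/2}$-type intermediate norm and interpolating. We expect, however, that only $1^- - \tfrac{3}{2}\alpha$ survives, which matches the statement. For $d=1$, $p=1$, we use $\alpha = \tfrac12^+$ via the embedding $L^1 \hookrightarrow H^{-1/2^-}$, giving $\tau^{1/2^-}$. Finally, a discrete Gronwall argument combined with induction on the $L^\infty$ bound for $\psi^n_\tau$ closes the estimate for $\tau \le \tau_0$.
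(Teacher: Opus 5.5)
\textbf{Verdict: there is a genuine gap.} Your stability step is essentially the paper's: H\"older with $V_1\in L^p$ into a dual Strichartz space, smallness from short intervals, then iteration. The local-error analysis, however, targets the wrong terms.

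\textbf{The dominant term you name does not occur.} Since $\tau\vphi_1(i\tau\Delta)=\int_0^\tau e^{i(\tau-s)\Delta}\,\rmd s$, the EWI integrates the free flow inside the Duhamel integral exactly. So $\int_0^\tau e^{i(\tau-s)\Delta}(e^{is\Delta}-1)[V\psi(t_n)]\,\rmd s$ never appears; it is the defect of a Lawson-type scheme that freezes the twisted integrand. The only temporal defect is $\int_0^\tau e^{i(\tau-s)\Delta}\Pi_\tau[V\Pi_\tau(\psi(t_n+s)-\psi(t_n))]\,\rmd s$. The paper handles it in \cref{prop:E2} without any normal form. It writes $\psi(t_n+s)-\psi(t_n)$ as a time integral of $\partial_t\psi$, uses the well-posedness bound $\|\langle\nabla\rangle^{-\alpha}\partial_t\psi\|_{L^q_tL^r_\vx}\lesssim 1$ for admissible $(q,r)$, and pays $\tau^{-\alpha/2}$ for $\langle\nabla\rangle^{\alpha}\Pi_\tau$. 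This gives $\tau^{1-\frac{\alpha}{2}}$ in every dimension. So the temporal error is never the bottleneck, and the rates in the theorem are not produced by a ``$V\psi\in H^{-\alpha}$'' mechanism on a temporal term.

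\textbf{The bottleneck is the term you call harmless.} What matters is not $\|(I-\Pi_\tau)\psi\|_{L^2}=O(\tau^{1-\frac{\alpha}{2}})$. It is the accumulated interaction $\mathcal{E}_1^n=-i\int_0^{t_n}\Stau(t_n-s)\Pi_\tau\big(V(I-\Pi_\tau)\psi(s)\big)\,\rmd s$. With your unfiltered $e^n$, this enters your stability bound through $\|(I-\Pi_\tau)\psi\|_{l^q_\tau L^r}$, not through its $L^2$ norm.
\begin{itemize}
\item Since $V_1\in L^p$ only, $V_1(I-\Pi_\tau)\psi$ must be placed in $L^{r_1'}$. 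That means controlling $(I-\Pi_\tau)\psi$ in $L^{r_0}$ with $\frac1p+\frac1{r_0}+\frac1{r_1}=1$.
\item In 1D the Sobolev embedding into $L^{r_0}$ costs $\alpha$ derivatives, and in 2D it costs $\alpha+\vep$. This is exactly the source of $N_\tau^{-(2-2\alpha)}=\tau^{1-\alpha}$ (resp.\ $\tau^{1^--\alpha}$).
\item In 3D the endpoint pair $(2,6)$ makes the same argument cost roughly $\alpha+\frac12$ derivatives. That gives only $\tau^{\frac34-\alpha}$, strictly worse than the claimed $\tau^{1^--\frac32\alpha}$ for every $\alpha<\frac12$.
\end{itemize}

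\textbf{The missing idea is the paper's treatment of this filter error in 3D.}
\begin{itemize}
\item Write $P_{>N}\psi=|\nabla|^{-2}P_{>N}(-\Delta\psi)$ and substitute $-\Delta\psi=i\partial_t\psi-V\psi$.
\item Control the $\partial_t\psi$ piece by its Strichartz bounds.
\item Split $V_1|\nabla|^{-2}P_{>N}(V_1\psi)$ dyadically in the output frequency $M$. For $P_{\gg M}\psi$, the derivative is transferred onto $\psi$ (\cref{lem:move_derivative}). For $P_{\lesssim M}\psi$, use a partial normal form (integration by parts in time, gaining $M^{-2}$ from $|\nabla|^{-2}P_M$), and estimate the boundary terms separately in $l^{q_0}_\tau L^{r_0}$ and $l^\infty_\tau L^2$.
\end{itemize}
Your 3D plan (an $H^{-1/2}$-type intermediate norm plus interpolation) supplies no such mechanism and only asserts the target rate. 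Relative form-boundedness of $V$ becomes usable precisely after trading $-\Delta\psi$ for $V\psi$ through the equation, and your proposal never takes that step.

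Minor: closing with a discrete Gronwall does not fit Strichartz norms. The paper instead absorbs the $Z$-term on short intervals and iterates.
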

	
	Recalling the sharp well-posedness theory of the NLSE \cite{wu2025_singular1d,wu2025_singularhd}, we have established the error estimate at the same threshold regularity of the potential as is needed by the well-posedness of the equation, which appears to be very rare in the numerical analysis literature. The optimality of the convergence order in 1D is confirmed by the numerical results, while it remains unclear and seems difficult to check numerically whether the convergence order in 2D and 3D is optimal. 
	
	\begin{remark}[$H^1$-norm error estimates]
		Similar to \cite{bao2025_singular}, due to the filteration of the numerical solution, the error bound in the $H^1$-norm can be obtained directly from the corresponding one in the $L^2$-norm with the order reduced by $\frac{1}{2}$ if the $L^2$-norm error bound has an order greater than $\frac{1}{2}$. If the $L^2$-norm error bound is first-order, then the half-order $H^1$-norm error bound is optimal (for the EWI) with respect to the $H^2$-regularity of the exact solution. 
	\end{remark}   

	We explain the main idea of the proof in the following. The first difficulty in establishing error estimates under singular potentials comes from the loss of the usual $L^2$-stability of the numerical flow. Consider a general exponential-type integrator with the numerical flow for one time step given by $\Phi^\tau(\phi) := e^{i \tau \Delta}\phi + \tau \widetilde{\Phi}^\tau(\phi)$ where $\tau>0$ is the time step size. The standard $L^2$-stability of the numerical flow takes the form: for $\phi \in L^2$, 
	\begin{equation*}
		\| \Phi^\tau(\phi) \|_{L^2} \leq \| \phi \|_{L^2} + \tau \| \widetilde{\Phi}^\tau(\phi) \|_{L^2} \leq (1 +  C\tau) \| \phi \|_{L^2}, 
	\end{equation*}
	where $C:= \| \widetilde{\Phi}^\tau \|_{L^2  \rightarrow L^2}$ is the stability constant that is independent of $\tau$ but may depend on $\phi$ in nonlinear problems. For unbounded singular potential $V \in L^p \setminus L^\infty$, the flow $\widetilde{\Phi}^\tau$ may fail to map $L^2$ into $L^2$ or the operator norm $\| \widetilde{\Phi}^\tau \|_{L^2 \rightarrow L^2}$ may fail to be bounded uniform in $\tau$, simply due to the fact that given $\phi \in L^2$, $V \phi \not \in L^2$ in general if $V \not \in L^\infty$. To overcome the loss of stability, instead of working under $L^2$-spaces, we establish the error estimate in some proper space-time spaces defined in \cref{eq:norm_discrete} with the use of discrete Strichartz estimates (see \cref{lem:dS1,lem:dS2,lem:dSc}) \cite{ignat2011,choi2021,LRI_error_1,su2022}. In fact, the standard $L^2$-stability can also be understood as the stability in the space-time space $l^\infty_\tau L^2_\vx$. In \cite{bao2025_singular}, the discrete Strichartz estimates have been applied to dealing with singular $L^2$-potentials, and the same idea can be generalized to the current case for highly singular $L^p$-potentials with $p<2$. 
	
	Another difficulty comes from the significant order reduction in the estimate of errors from the temporal discretization and the spatial discretization (filteration), which involves the singular potential. In addition to the use of discrete Strichartz estimates, a key observation used in the estimate of temporal convergence is that the temporal error in the EWI depends on the space-time estimates of the time derivative of the exact solution, which is available according to the well-posedness in \cite{wu2025_singular1d,wu2025_singularhd}: for any admissible pair $(q, r)$ (see \cref{eq:admissible}), 
	\begin{equation}\label{eq:psi_spacetime}
		\| \langle \nabla \rangle^{-\alpha} \partial_t \psi \|_{L^q_t L^r_\vx} \lesssim 1, 
	\end{equation}
	where $\langle \nabla \rangle$ is the Fourier multiplier with symbol $(1+|\vxi|^2)^\frac{1}{2}$. With \cref{eq:psi_spacetime}, somewhat surprisingly, we are able to establish optimal temporal convergence in all the cases (see \cref{prop:E2}). 
	
	However, the situation for the spatial filteration error is more involved. The main difference is that for the spatial convergence, the error thus depends on the spatial derivatives of the exact solution, which admits no non-trivial space-time estimates. This can be seen from the equation \cref{NLSE}: applying $|\nabla|^{-\alpha}$ on both sides of \cref{NLSE} and assuming $\beta = 0$ and $V \in L^p$ for simplicity,  
	\begin{equation}
		|\nabla|^{2-\alpha} \psi(t) = |\nabla|^{-\alpha} \partial_t \psi(t) - |\nabla|^{-\alpha} (V \psi(t)). 
	\end{equation}
	Though we have \cref{eq:psi_spacetime} for $\partial_t \psi(t)$, since $V \in L^p$, we can only control the $L^2$-norm of the last term through $ \| |\nabla|^{-\alpha} (V \psi(t)) \|_{L^2} \lesssim \| V \psi(t) \|_{L^p} \leq \| V \|_{L^p} \| \psi(t) \|_{L^\infty} $, where we use the Sobolev embedding. Hence, there is no space-time estimates except $L_t^\infty L_\vx^2$ for $|\nabla|^{2-\alpha} \psi(t)$. As a result, the spatial error established in \cref{prop:E1_pl2} is of lower order compared to the temporal error when $\alpha > 0$ (or, equivalently, $p \leq 2$).
	
	In addition to the loss of spatial convergence due to the lack of space-time estimates of the spatial derivative of $\psi$, there is another source of loss in 3D due to the end-point of the admissible pairs: the end-points in 1D and 2D are $(4, \infty)$ and $(2^+, \infty^-)$, while the end-point in 3D is $(2, 6)$. As a result, when using the discrete Strichartz estimate \cref{lem:dSc}, one only needs to control the error in (roughly) $L_\vx^{1}$-norm in 1D and 2D, then needs to control the error in $L_\vx^{\frac{6}{5}^+}$-norm in 3D. In fact, one of the main contributions of the current paper is to establish an improved estimate of this spatial filteration error in 3D, which is done in \cref{prop:R1,prop:R2} with the use of normal form transformation and high-low frequency decomposition. In particular, the estimates established here improve the existing estimates for 3D $L^{2^+}$-potential established in \cite{bao2025_singular} to optimal first-order, and are further generalized to more singular potentials.

	Next, we briefly explain the idea of the estimate for the filteration error in 3D. The details can be found in \cref{sec:3}. Again, for simplicity, we assume $\beta = 0$ and $V \in L^p(\R^3)$. As we have already mentioned, there are no space-time estimates of $|\nabla|^{2-\alpha} \psi$ in addition to the trivial one in $L^\infty_t L^2_{\vx}$. Hence, to avoid directly estimating $|\nabla|^{2-\alpha} \psi$, we can make use of the equation by noting that
	\begin{equation}
		-\Delta \psi = i \partial_t \psi - V \psi. 
	\end{equation}
	It turns out that the two terms on the RHS are more favorable. The benefit of transferring to $\partial_t \psi$ lies in that it admits space-time estimates as presented in \cref{eq:psi_spacetime} while the advantage of $V \psi$ lies essentially in the fact that $V$ (under our assumption) is a lower order perturbation of $-\Delta$. In fact, since $V \in L^p(\R^3)$ with $p > 3/2$, we have, for any $\phi = \phi(\vx)$ and $1 \leq  q \leq p$, 
	\begin{equation}\label{eq:V_order}
		\| V \phi \|_{L^q} \leq \| V \|_{L^p} \| \phi \|_{L^\frac{pq}{p-q}} \lesssim \| V \|_{L^p} \| \, |\nabla|^{\frac{3}{p}^+} \phi \|_{L^q}.  
	\end{equation}
	Then the multiplication operator $V$ is relatively bounded by $| \nabla |^{\frac{3}{p}^+}$ (in the sense of \cref{eq:V_order}), which is of lower order than $-\Delta = |\nabla|^2$ when $p > 3/2$. Hence, by replacing $-\Delta \psi$ with $V \psi$, intuitively, one can gain $2 - \frac{3}{p} = \frac{1}{2} - \alpha$-order derivative. For $L^2$-potentials, we have $\alpha=0$, and thus we gain $\frac{1}{2}$-order derivative by this substitution, which fully compensates the upgrade from $L^1$-norm to $L^\frac{6}{5}$-norm in the 3D estimate. To indeed make use of such gain of regularity of $\frac{1}{2}-\alpha$-order from replacing $-\Delta \psi$ by $V \psi$, we (ideally) hope the following
	\begin{equation}
		|\nabla|^{\frac{1}{2} - \alpha} (V \psi) \quad \text{be controlled by} \quad V  |\nabla|^{\frac{1}{2} - \alpha} \psi.  
	\end{equation}
	However, it is impossible as $|\nabla|^{\frac{1}{2}-\alpha}$ and $V$ do not commute (in fact, the former is not even well-defined for $V$ merely in $L^p_\text{loc}$ with $p > 3/2$). Instead, to use such gain of the regularity, we employ the techniques of (partial) normal form transformation and high-low frequency decomposition for $V \psi$ such that: (i) when $\psi$ is of higher-frequency in $V \psi$, the spatial derivatives can effectively pass through $V$ and fall on $\psi$, and (ii) when $\psi$ is of lower-frequency in $V \psi$, by using a partial normal form transformation, the spatial derivative is transferred to a composition of a time derivative and a negative-order spatial derivative such that both of them can be applied directly to $\psi$ across $V$. 
	
	The rest of the paper is organized as follows. In \cref{sec:2}, we present some preliminary results. \cref{sec:3} is devoted to the estimate of the spatial filteration error. The error estimates of the EWI is then established in \cref{sec:4}. The extension to the nonlinear case is discussed in \cref{sec:nonlinear}, and the numerical results are demonstrated in \cref{sec:numer}. Finally, some conclusions are drawn in \cref{sec:conclusion}. 
	
	\subsection*{Notation.} We use $| \cdot |$ for the Euclidean norm in $\R^d$. For $s \in \R$, 
	we denote by $|\nabla|^s$ and $\langle \nabla \rangle^s$ the Fourier multipliers with symbols $|\vxi|^s$ and $ \langle \vxi \rangle^s:=(1+|\vxi|^2)^\frac{s}{2} $, respectively. 
	For $a \in \R$, we write $a^-$ to denote $a-\vep$ for an arbitrarily small $\vep>0$. For any $p \in [1, \infty]$, $p' \in [1, \infty]$ is the H\"older conjugate of $p$ such that $\frac{1}{p}+\frac{1}{p'} =1$. We denote by $C$ a generic positive constant independent of the time step size $\tau$, and by $C(\alpha)$ a generic positive constant depending on the parameter $\alpha$. The notation $A \lesssim B$ is used to represent that there exists a generic constant $C>0$, such that $|A| \leq CB$. 
	
	
	\section{Preliminary results}\label{sec:2}
	In this section, we recall some preliminary results to be used in the proof. 

	\subsection{Discrete Strichartz estimates}	
	In this subsection, we recall the discrete-in-time Strichartz estimates. Let $\phi:\R^d \times \R \rightarrow \C$. For any interval $I \subset \R$, we consider the space-time Lebesgue spaces $L^q(I; L^r(\R^d))$ with $1 \leq q, r \leq \infty$ equipped with the following norms
	\begin{equation}\label{eq:norm_continuous}
		\| \phi \|_{L^q(I; L^r)}:=
		\left\{
		\begin{aligned}
			&\left(\int_I \| \phi(\cdot, t) \|_{L^r}^q \rmd t \right)^\frac{1}{q}, &&1 \leq q < \infty, \\
			&\esssup_{t \in I} \| \phi(\cdot, t) \|_{L^r}, && q = \infty. 
		\end{aligned}
		\right. 
	\end{equation}
	At the time discrete level, for any $I \subset \R$ and $\tau > 0$, we define, for a sequence $(\phi^k(\vx))_{k \in \mathbb{Z}}$, 
	\begin{equation}\label{eq:norm_discrete}
		\| \phi \|_{l^q_\tau(I; L^r)} := 
		\left\{
		\begin{aligned}
			&\left(\tau \sum_{\tau k \in I} \| \phi^k \|_{L^r}^q\right)^\frac{1}{q}, &&1 \leq q < \infty, \\
			&\sup_{\tau k \in I} \| \phi^k \|_{L^r}, && q = \infty. 
		\end{aligned}
		\right.
	\end{equation}
	Introduce the filter function $\Pi_\tau$ as 
	\begin{equation}\label{eq:Pitau_def}
		(\Pi_\tau \phi)(\vx) = \frac{1}{(2 \pi)^d} \int_{\R^d} \chi(\tau^{\frac{1}{2}} {\vxi}) \widehat{\phi}(\vxi) e^{i \vx \cdot \vxi} \rmd \vxi, \quad \vx \in \R^d, 
	\end{equation}
	where $\chi \in C^\infty(\R^d)$ satisfies $\chi(\vx) \equiv 1$ for $\vx \in B(0, 1)$ and is supported on $B(0, 2) $, and $\widehat \phi$ is the Fourier transform of $\phi$, which, when $\phi \in L^1(\R^d)$, is given by, 
	\begin{equation}
		\widehat{\phi}(\vxi) = \int_{\R^d} \phi(\vx) e^{- i \vx \cdot \vxi} \rmd \vx, \qquad \vxi \in \R^d. 
	\end{equation} 
	Note that 
	\begin{equation}\label{eq:Pitau_identity}
		\Pi_\tau \Pi_{\tau/4} = \Pi_{\tau/4} \Pi_\tau = \Pi_\tau. 
	\end{equation}
	Define the filtered Schr\"odinger group
	\begin{equation}\label{eq:S_tau}
		\Stau(t) = e^{i t \Delta}\Pi_{\tau/4}, \quad t \in \R. 
	\end{equation}
	A pair $(q, r) \in [2, \infty] \times [2, \infty]$ is admissible if
	\begin{equation}\label{eq:admissible}
		\frac{2}{q} = d\left(\frac{1}{2} - \frac{1}{r} \right), \qquad (q, r, d) \neq (2, \infty, 2). 
	\end{equation}
	We have the following discrete Strichartz estimates \cite{LRI_error_1}: 
	\begin{enumerate}[(i)]
		\item Let $(q, r)$ be admissible, for all $\phi \in L^2(\R^d)$,  we have
		\begin{equation}\label{lem:dS1}
			\| S_\tau(\cdot) \phi \|_{l_\tau^q(\tau \Z; L^r)} \leq C(d, q) \| \phi \|_{L^2}. 
		\end{equation}
		
		\item Let $(q, r)$, $(\tilde q, \tilde r)$ be admissible and $(q, \tilde q) \neq (2, 2)$, for all $f \in l_\tau^{\tilde q'}(\tau \Z; L^{\tilde r'}(\R^d))$ and $\theta \in [-1, 1]$, we have
		\begin{equation}\label{lem:dS2}
			\left\| \tau \sum_{k = -\infty}^{n-1} S_\tau((n-k+\theta)\tau) f^k \right\|_{l_\tau^q(\tau \Z; L^r)} \leq C(d, q, \tilde q) \| f \|_{l_\tau^{\tilde q'}(\tau \Z; L^{\tilde r'})}. 
		\end{equation}
		
		\item Let $(q, r)$, $(\tilde q, \tilde r)$ be admissible and $(q, \tilde q) \neq (2, 2)$, for all $f \in L^{\tilde q'}(\R; L^{\tilde r'}(\R^d))$, we have
		\begin{equation}\label{lem:dSc}
			\left\| \int_{-\infty}^{n\tau} S_\tau(n\tau - s) f(s) \rmd s \right \|_{l_\tau^q(\tau \Z; L^r)} \leq C(d, q, \tilde{q}) \| f \|_{L^{\tilde q'}(\R; L^{\tilde r'})}. 
		\end{equation}
	\end{enumerate}
	As discussed in \cite[Remark 3.4]{bao2025_singular}, all these estimates can be easily extended to finite time intervals. 
	
	\subsection{Frequency decomposition operators}	
	We introduce some Fourier multipliers related to the frequency decomposition that will be used frequently in the proof. From $\chi$ introduced in \cref{eq:Pitau_def}, define a function $\vphi$ as
	\begin{equation}
		\vphi(\vxi):= \chi(\vxi/2)-\chi(\vxi), \qquad \vxi \in \R^d. 
	\end{equation} 
	Then $\vphi \in C_\text{c}^\infty(\R^d)$ is supported in an annular region satisfying $\vphi(\vxi) = 0$ if $ |\vxi| \leq 1 $ or $|\vxi| \geq 4$. We define, for any real number $N \geq 1$, $P_N$ and $P_{<N}$ as the Fourier multipliers with symbols $\vphi(\vxi/N)$ and $\chi(\vxi/N)$, respectively. Moreover, we set 
	\begin{equation}\label{eq:PN}
		P_{>N} := I -  P_{< N}, \quad P_{\gg N} := P_{> 10N}, 
		\quad P_{\lesssim N} := I - P_{\gg N}. 
	\end{equation}
	In particular, we have the following dyadic decomposition of the identity: 
	\begin{equation}\label{eq:dyadic_decomposition}
		I = \mathrm{Id} = P_{<1} + \sum_{M \geq 1} P_M, 
	\end{equation}
	where $M$ is taken over all dyadic integers $M = 2^j$. We shall adopt this convention whenever $M$ (or $M'$) appears.  
	In addition, we have 
	\begin{equation}
		\Pi_\tau = P_{<N_\tau}, \quad N_\tau := \tau^{-\frac{1}{2}}. 
	\end{equation}
	We recall in the following some standard estimates of these operators \cite{choi2021,wu2025_singularhd}. 
	\begin{lemma}\label{lem:Pi}
		For any $1 \leq p \leq \infty$ and $\phi:\R^d \rightarrow \C$ measurable, we have, for $N \geq 1$, 
		\begin{align}
			&\| P_N \phi \|_{L^p} + \| P_{<N} \phi \|_{L^p} + \| P_{>N} \phi \|_{L^p} \lesssim \| \phi \|_{L^p}, \label{eq:p1} \\
			&\| P_{N} \phi \|_{L^p} + \| P_{<N} \phi \|_{L^p} \lesssim N^{d(\frac{1}{q} - \frac{1}{p})} \| \phi \|_{L^q},  \qquad 1 \leq q \leq p, \label{eq:bernstein}\\
			&\| P_{>N} \phi \|_{L^p} \lesssim N^{-\gamma} \| \, |\nabla|^{\gamma} \phi \|_{L^p}, \qquad \gamma \geq 0, \label{eq:p2} \\
			&\| \langle \nabla \rangle^\gamma P_{<N} \phi \|_{L^p} + \| \, |\nabla|^\gamma P_{<N} \phi \|_{L^p} \lesssim N^\gamma \| \phi \|_{L^p}, \qquad \gamma \geq 0, \label{eq:p3} 
		\end{align}
		where all the constants are independent of $N$ and $\phi$. 
	\end{lemma}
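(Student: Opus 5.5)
All four estimates in \cref{lem:Pi} are standard Littlewood--Paley facts. The plan is to write each operator as convolution with a rescaled kernel and apply Young's inequality, keeping track of the dependence on $N$ by scaling.

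\emph{Step 1: kernels and \cref{eq:p1}.} Since $\chi$ and $\vphi$ are Schwartz, we can write $P_{<N}\phi = K_N * \phi$ and $P_N \phi = L_N * \phi$, where $K_N(\vx) = N^d \check\chi(N\vx)$ and $L_N(\vx) = N^d \check\vphi(N\vx)$. Here $\check\chi,\check\vphi \in \mathcal{S}(\R^d)$. By scaling, $\|K_N\|_{L^1} = \|\check\chi\|_{L^1}$ and $\|L_N\|_{L^1} = \|\check\vphi\|_{L^1}$, both independent of $N$. Young's inequality with the $L^1$ kernel then gives \cref{eq:p1} for $P_N$ and $P_{<N}$ on every $L^p$, $1\le p\le\infty$. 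Writing $P_{>N} = I - P_{<N}$ and using the triangle inequality gives the bound for $P_{>N}$.

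\emph{Step 2: Bernstein, \cref{eq:bernstein}.} Choose $r$ by $1 + \frac1p = \frac1r + \frac1q$. By scaling, $\|K_N\|_{L^r} = N^{d(1-\frac1r)}\|\check\chi\|_{L^r} = N^{d(\frac1q-\frac1p)}\|\check\chi\|_{L^r}$, and the same holds for $L_N$. Young's inequality concludes.

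\emph{Step 3: \cref{eq:p3}.} Pick a slightly enlarged cutoff $\tilde\chi$ with $\tilde\chi\chi=\chi$, for instance $\tilde\chi = \chi(\cdot/4)$, which equals $1$ on $B(0,4)\supset \mathrm{supp}\,\chi$. Then
\[
|\nabla|^\gamma P_{<N} = \bigl(|\nabla|^\gamma \tilde P_{<N}\bigr) P_{<N}.
\]
The operator $|\nabla|^\gamma\tilde P_{<N}$ has symbol $N^\gamma\, m(\vxi/N)$ with $m(\vxi) = |\vxi|^\gamma\tilde\chi(\vxi)$. This $m$ is compactly supported and smooth away from the origin, with a $|\vxi|^\gamma$ singularity at $0$. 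Its inverse Fourier transform is therefore integrable: it is bounded locally and decays like $|\vx|^{-d-\gamma}$ at infinity, by the standard homogeneous-kernel argument. Scaling shows the $L^1$ norm of the rescaled kernel is independent of $N$, which yields the factor $N^\gamma$.

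For $\langle\nabla\rangle^\gamma$, the symbol is $N^\gamma\bigl(N^{-2}+|\vxi/N|^2\bigr)^{\gamma/2}\tilde\chi(\vxi/N)$. Because $N \ge 1$, the functions $\bigl(N^{-2}+|\vxi|^2\bigr)^{\gamma/2}\tilde\chi(\vxi)$ have inverse Fourier transforms that are bounded in $L^1$ uniformly in $N$. One way to see this is to bound $\|\check m\|_{L^1}$ by a finite number of derivatives of $m$ in $L^2$-type norms with weights, uniformly for $N^{-1}\in(0,1]$.

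\emph{Step 4: \cref{eq:p2}.} Write $P_{>N}\phi = \bigl[(1-\chi)(\cdot/N)\,|\cdot/N|^{-\gamma}\bigr]^{\vee}\!*\bigl(N^{-\gamma}|\nabla|^\gamma\phi\bigr)$. The multiplier $m_\gamma(\vxi) = (1-\chi(\vxi))|\vxi|^{-\gamma}$ is smooth, vanishes near $0$, and is a symbol of order $-\gamma$. I expect this to be the only delicate point, because its kernel has to be shown integrable even though $m_\gamma$ is not compactly supported.

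For that, I would decompose $m_\gamma = \sum_{M\ge1}\vphi(\vxi/M)\, m_\gamma(\vxi)$ dyadically. Each piece has a kernel with $L^1$ norm $\lesssim M^{-\gamma}$ by scaling, since $\vphi(\cdot)|\cdot|^{-\gamma}$ is Schwartz. For $\gamma>0$ the series sums in $L^1$, and Young's inequality finishes; the $N$-dependence again comes from scaling. For $\gamma = 0$ the estimate is just \cref{eq:p1}.
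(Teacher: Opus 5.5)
Your argument is correct and is the standard one. The paper gives no proof of \cref{lem:Pi}; it only quotes these Littlewood--Paley estimates from the literature. Kernel rescaling plus Young's inequality, with a dyadic summation for the order $-\gamma$ multiplier in \cref{eq:p2}, is exactly the proof behind those references.

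One justification needs tightening. It concerns the $N$-uniform $L^1$ bound on the kernel of $m_N(\vxi)=(N^{-2}+|\vxi|^2)^{\gamma/2}\tilde\chi(\vxi)$ in \cref{eq:p3}.

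\emph{Why the suggested route fails.} Controlling $\|\check m_N\|_{L^1}$ by $L^2$-based norms of finitely many derivatives needs some order $k>\frac d2$. But $\partial^k m_N$ behaves like $|\vxi|^{\gamma-k}$ on $N^{-1}\lesssim|\vxi|\lesssim 1$. Its $L^2$ norm therefore stays bounded as $N\to\infty$ only if $k<\gamma+\frac d2$. For small $\gamma$ no integer $k$ satisfies both conditions, e.g.\ $0<\gamma\le\frac12$ when $d=1,3$, or $0<\gamma\le1$ when $d=2$. This range covers the values $\gamma=\alpha$ used in \cref{prop:E2} in two and three dimensions.

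\emph{How to repair it.} The claim itself is true, and the dyadic device of your Step 4 proves it.
\begin{enumerate}[(i)]
\item Split $m_N$ into pieces supported where $|\vxi|\sim\lambda$, for dyadic $N^{-1}\lesssim\lambda\lesssim1$. Each piece rescales to $\lambda^\gamma$ times a bump whose derivatives are bounded uniformly in $N$ and $\lambda$, because $(N\lambda)^{-2}\lesssim1$.
\item Add one piece on $|\vxi|\lesssim N^{-1}$. It rescales to $N^{-\gamma}\langle\cdot\rangle^\gamma\chi$.
\item Summing the kernel $L^1$ norms gives $\lesssim\sum_\lambda\lambda^\gamma+N^{-\gamma}\lesssim1$ for $\gamma>0$; the case $\gamma=0$ is trivial.
\end{enumerate}
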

	
	\subsection{Error equation}
	We briefly recall the derivation of the error equation for the EWI for the convenience of the reader. As the main difficulty of the error estimates lies in the handling of singular potentials, we shall omit the nonlinearity (i.e. taking $\beta = 0$ in \cref{NLSE}) for a better illustration. The extension to the NLSE is relatively more standard and is discussed in \cref{sec:nonlinear}. 
	
	Define the error function $e^n: = \Pi_\tau \psi(t_n) - \psi^n_\tau$ for $0 \leq n \leq T/\tau$. Iterating \cref{eq:EWI_scheme}, recalling \cref{eq:S_tau,eq:Pitau_identity}, we obtain 
	\begin{align}\label{eq:duhamel_discrete}
		\psi^n_\tau 
		&= e^{i n \tau \Delta} \psi^0_\tau -i \tau \sum_{k=0}^{n-1} e^{i(n-1-k)\tau\Delta} \vphi_1(i \tau \Delta) \Pi_\tau (V\psi^k_\tau) \notag \\
		&= \Stau(n\tau) \psi^0_\tau -i \tau \sum_{k=0}^{n-1} \Stau((n-1)\tau - k \tau) \vphi_1(i \tau \Delta) \Pi_\tau (V \psi^k_\tau), \qquad 0 \leq n \leq T/\tau. 
	\end{align}
	Applying $\Pi_\tau$ to the Duhamel's formula for the exact solution $\psi$, we get
	\begin{equation}\label{eq:duhamel}
		\Pi_\tau \psi(n\tau) = \Stau(n\tau) \Pi_\tau \psi_0 - i \int_0^{n\tau} \Stau(n \tau - s) \Pi_\tau (V \psi(s)) \rmd s. 
	\end{equation}
	Subtracting \cref{eq:duhamel_discrete} from \cref{eq:duhamel}, we have
	\begin{align}\label{eq:error_eq}
		e^n = \mathcal{E}_0^n + \mathcal{E}_1^n + \mathcal{E}_2^n + Z^n, \quad n \geq 0,  
	\end{align}
	where
	\begin{align}
		&\mathcal{E}_0^n := \Stau(n\tau) (\Pi_\tau \psi_0 - \psi^0_\tau), \label{eq:E0}\\
		& \mathcal{E}_1^n := - i \int_0^{n\tau} \Stau(n \tau - s) \Pi_\tau \left(V (I - \Pi_\tau) \psi(s) \right) \rmd s, \label{eq:E1}\\
		&\mathcal{E}_2^n := - i \int_0^{n\tau} \Stau(n \tau - s) \Pi_\tau (V\Pi_\tau \psi(s)) \rmd s + i \tau \sum_{k=0}^{n-1} \Stau((n-1)\tau - k \tau) \vphi_1(i \tau \Delta) \Pi_\tau (V\Pi_\tau\psi(k\tau)), \label{eq:E2}\\
		&Z^n := - i \tau \sum_{k=0}^{n-1} \Stau((n-1)\tau - k \tau) \vphi_1(i \tau \Delta) \Pi_\tau ( V e^k ). \label{eq:Z_def}
	\end{align}
	Here, $\mathcal{E}_0 $ is the error resulting from the discretization of the initial data, and $\mathcal{E}_1$ and $\mathcal{E}_2$ are the errors introduced from the spatial filteration (or spectral truncation) and the temporal discretization in the EWI \cref{eq:EWI_scheme}, respectively. We have used the short notation $\mathcal{E}$ to denote the sequence $(\mathcal{E}^n)_{n \geq 0}$. 
	
	The error estimate of the EWI \cref{eq:EWI_scheme} then reduces to the estimates of the four terms \cref{eq:E0,eq:E1,eq:E2,eq:Z_def}. For $\mathcal{E}_1$, the estimate is presented in \cref{sec:3}, while the rest are presented in \cref{sec:4}. 
	
	\allowdisplaybreaks
	\section{Estimate of the spatial filteration error for \texorpdfstring{\cref{NLSE} with $\beta = 0$}{NLSE with beta=0}}\label{sec:3}
	In this section, we establish the estimates for $\mathcal{E}_1$ \cref{eq:E1}. We first recall the known regularity results on the exact solution $\psi$. Recall that $T>0$ is a fixed final time. By the well-posedness results in \cite{wu2025_singular1d,wu2025_singularhd}, if $\psi_0 \in H^{2-\alpha}(\R^d)$, we have $\psi \in C([0, T]; H^{2-\alpha}(\R^d)) \cap C^1([0, T]; H^{-\alpha}(\R^d))$. Moreover, we have, for any admissible pair $(q, r)$, 
	\begin{equation}\label{eq:regularity}
		\| \psi \|_{L^\infty([0, T]; H^{2-\alpha})} + \| \langle \nabla \rangle^{-\alpha} \partial_t \psi \|_{L^q([0, T]; L^r)} \lesssim 1. 
	\end{equation} 
		
	For the potential $V$, we assume that
	\begin{equation}\label{eq:V_decomp}
		V(\vx) = V_1(\vx) + V_2(\vx), \qquad \vx \in \R^d, \qquad V_1 \in L^p(\R^d), \quad V_2 \in L^\infty(\R^d). 
	\end{equation}
	In fact, we can also assume that $V_1 \in L^1(\R^d) \cap L^p(\R^d)$ in the above decomposition. We then define some admissible pairs to be used in the proof. Define $(q_1, r_1)$ such that
	\begin{equation}\label{eq:r1_def}
		(q_1, r_1) = \left\{
		\begin{aligned}
			&(4, \, \infty), && d=1, \\
			&(\frac{2}{1-\vep}, \, \frac{2}{\vep}), && d=2, \\
			&(\frac{2}{1-\vep}, \, \frac{6}{1+ 2\vep}), && d =3,
		\end{aligned}
		\right.
	\end{equation}
	where $\vep >0$ can be arbitrarily small. Here, $(q_1, r_1)$ are the end-point admissible pairs for the discrete Strichartz estimates. We further define another admissible pair $(q_0, r_0)$ where $q_0$ is determined by $r_0$ through \cref{eq:admissible} with $r_0$ satisfying
	\begin{equation}\label{eq:r0_def}
		\frac{1}{p} + \frac{1}{r_0} + \frac{1}{r_1} = 1, 
	\end{equation}
	where we have $r_0 \geq 2$ and $r_0 \leq \infty$ if $d=1$, $r_0 < \infty$ if $d=2$, and $r_0 < 6$ if $d=3$ when $\vep$ is sufficiently small. 
	It then follows from the H\"older's inequality that 
	\begin{equation}\label{eq:V1}
		\| V_1 \phi \|_{L^{r_1'}} \leq \| V_1 \|_{L^p} \| \phi \|_{L^{r_0}}, \qquad \phi \in L^{r_0}(\R^d). 
	\end{equation}
	It turns out that, although the EWI \cref{eq:EWI_scheme} is not stable in the space $l^\infty_\tau([0, T]; L^2(\R^d))$ as is discussed in the introduction, it is stable in the space $l^\infty_\tau([0, T]; L^2(\R^d)) \cap l^{q_0}_\tau([0, T]; L^{r_0}(\R^d))$ (see \cref{prop:Z}). 
	
	Then we have the following. 
	\begin{proposition}\label{prop:E1_pg2}
		Assume $p > 2$. When $d=1, 2$, for any admissible pairs $(q, r)$ with $q \neq 2$, we have
		\begin{equation}\label{prop:E11_12}
			\| \mathcal{E}_1 \|_{l^{q}_\tau([0, T]; L^{r})} \lesssim \tau. 
		\end{equation}
		When $d=3$, we have
		\begin{equation}\label{prop:E11_3}
			\| \mathcal{E}_1 \|_{l^{q_0}_\tau([0, T]; L^{r_0})} +  \| \mathcal{E}_1 \|_{l^{\infty}_\tau([0, T]; L^{2})} \lesssim \tau. 
		\end{equation}
	\end{proposition}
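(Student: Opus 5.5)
The plan is to combine the inhomogeneous discrete Strichartz estimate \cref{lem:dSc} with the $H^2$-regularity \cref{eq:regularity}; note that $\alpha=0$ when $p>2$. Since $\Pi_\tau$ is bounded on every $L^r$ by \cref{eq:p1}, \cref{lem:dSc} with the end-point dual pair $(q_1,r_1)$ gives, for every admissible $(q,r)$ with $q\neq 2$ (and also for $(q_0,r_0)$ and $(\infty,2)$ in 3D),
\begin{equation*}
\| \mathcal{E}_1 \|_{l^{q}_\tau([0,T];L^{r})} \lesssim \| V (I-\Pi_\tau)\psi \|_{L^{q_1'}([0,T];L^{r_1'})}.
\end{equation*}
I then split $V=V_1+V_2$ as in \cref{eq:V_decomp}, taking $V_1\in L^1\cap L^p$.

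For $V_2$ I place the term in the dual pair $(\infty,2)$ instead. Then \cref{eq:p2} gives $\|V_2(I-\Pi_\tau)\psi\|_{L^1_tL^2}\lesssim T\|V_2\|_{L^\infty}N_\tau^{-2}\|\psi\|_{L^\infty H^2}\lesssim\tau$.

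For $V_1$ in $d=1,2$, I use the freedom that $V_1\in L^1\cap L^p\subset L^{\tilde p}$ for every $\tilde p\in[1,p]$. I choose $\tilde p$ so that $\frac1{\tilde p}+\frac12+\frac1{r_1}=1$. In 1D this means $\tilde p=2$; in 2D it means $\frac1{\tilde p}=\frac12-\frac\vep2$, which lies in $(2,p]$ once $\vep$ is small. Hölder's inequality then gives $\|V_1(I-\Pi_\tau)\psi\|_{L^{r_1'}}\le\|V_1\|_{L^{\tilde p}}\|(I-\Pi_\tau)\psi\|_{L^2}\lesssim\tau\|\psi\|_{H^2}$. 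Integrating in time over $[0,T]$ proves \cref{prop:E11_12}.

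In 3D the same argument only yields $\|(I-\Pi_\tau)\psi\|_{L^{r_0}}$ with $r_0\approx3$ (for $\tilde p=2$). Sobolev embedding then gives $\|(I-\Pi_\tau)\psi\|_{H^{1/2}}\lesssim\tau^{3/4}$, which is exactly the loss in \cite{bao2025_singular}. To remove this loss I decompose dyadically, $(I-\Pi_\tau)\psi=\sum_{M\gtrsim N_\tau}P_M\psi$. On each piece I use the equation:
\begin{equation*}
P_M\psi=(-\Delta)^{-1}P_M\bigl(i\partial_t\psi-V\psi\bigr).
\end{equation*}

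The $\partial_t\psi$ part is harmless. Bernstein's inequality \cref{eq:bernstein} and Hölder's inequality give
\begin{equation*}
\|V_1(-\Delta)^{-1}P_M\partial_t\psi\|_{L^{r_1'}}\lesssim\|V_1\|_{L^{\tilde p}}M^{-2}\|\partial_t\psi\|_{L^{r_0}}.
\end{equation*}
Here $(q_0,r_0)$ is admissible, so \cref{eq:regularity} controls $\partial_t\psi$ in $L^{q_0}_tL^{r_0}$, and $q_0\ge q_1'$ on a finite interval. Summing $M^{-2}$ over $M\gtrsim N_\tau$ gives $\tau$.

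The $V\psi$ part is where the gain described in the introduction must be realised: $V$ is of order $3/\tilde p=\tfrac32^+$, less than $2$. I split $V\psi=V P_{\gtrsim M}\psi+V P_{\ll M}\psi$.

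For the high-frequency piece $V P_{\gtrsim M}\psi$, derivatives can effectively be placed on $\psi$. I estimate $\|P_M(V P_{\gtrsim M}\psi)\|_{L^{r_0}}$ through $\|V\|_{L^{\tilde p}}$ times a lower norm of $P_{\gtrsim M}\psi$, and reapply the substitution $P_{M'}\psi=(-\Delta)^{-1}P_{M'}(i\partial_t\psi-V\psi)$ to $P_{\gtrsim M}\psi$ if needed. This should produce a factor $M^{-1/2}$ beyond the naive count.

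For the low-frequency piece $V P_{\ll M}\psi$, the output frequency $M$ must come from $V$. Here I perform a partial normal form transformation inside the Duhamel integral defining $\mathcal{E}_1$. I integrate by parts in $s$, exploiting the non-resonance between the phase $e^{-is\Delta}$ at the output frequency and the high frequency $M\gg$ of the input of $V$. This trades a factor of $(-\Delta)$ for $\partial_s$ acting on $P_{\ll M}\psi$ together with a negative-order multiplier. The boundary terms are estimated in $l^\infty_\tau L^2$ and $l^{q_0}_\tau L^{r_0}$. The interior term involves $\partial_t\psi$ and is again controlled by \cref{eq:regularity}.

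I expect this low-high interaction to be the main obstacle. The difficulty is that $V$ is merely $L^p$, so no derivative can be placed on it. The bookkeeping must therefore keep every derivative on $\psi$ or $\partial_t\psi$ while the dyadic sum over $M\gtrsim N_\tau$ still converges to $O(\tau)$. Finally, the $l^\infty_\tau L^2$ bound in \cref{prop:E11_3} follows from the same estimates, since $(\infty,2)$ is admissible.
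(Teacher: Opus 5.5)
Your argument for $d=1,2$ and for the $V_2$ term is correct. The only difference from the paper is that you use the end-point dual pair $(q_1,r_1)$ with $V_1\in L^{\tilde p}$, $\tilde p\in[1,p]$, whereas the paper keeps $V_1\in L^p$ and uses the dual pair $(\frac{2p}{d},\frac{2p}{p-2})$; the two are equivalent. In 3D, your $\partial_t\psi$ term is fine, and so is the high piece $V_1(-\Delta)^{-1}P_M(V_1P_{\gtrsim M}\psi)$: Bernstein from $L^q$, $\frac1q=\frac1p+\frac12$, into $L^{r_0}$ gives a summable $M^{-2-(\frac{3\theta}{2}-\vep)}$.

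The gap is in the low-frequency piece, which is the whole difficulty. Your $M\gtrsim N_\tau$ is the frequency of the \emph{intermediate} function $(-\Delta)^{-1}P_M(V_1P_{\ll M}\psi)$. That function is multiplied by $V_1$ again and filtered by $\Pi_\tau$ before it enters the Duhamel integral. The only phase available for integration by parts in $s$ is therefore $e^{is|\vxi|^2}$ at the final output frequency $\vxi$, with $|\vxi|\le 2N_\tau$ and possibly $\vxi\to 0$. Because $V_1$ is merely $L^p$, $\vxi$ has no relation to $M$. The intermediate function also does not oscillate like $e^{isM^2}$; its time dependence is only that of $P_{\ll M}\psi$. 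So there is no non-resonance at frequency $M$ to exploit, and integrating by parts gains only $|\vxi|^{-2}$, which is singular near $\vxi=0$ and not small in $M$.

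Suppose you repair this by decomposing the final output dyadically and integrating by parts there. Your cut of $\psi$ is still relative to the intermediate $M$, so $\partial_t\psi$ survives at frequencies up to $M$. The interior term then costs $\|P_{\ll M}\partial_t\psi\|_{L^\infty}\lesssim M^{1/2}\|\partial_t\psi\|_{L^6}$ against $M^{-2}$. Hölder and Bernstein then give only $\sum_{M\gtrsim N_\tau}M^{-3/2}\sim N_\tau^{-3/2}$, which is exactly the $\tau^{3/4}$ loss you set out to remove.

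The missing idea is the choice of frequency to split against. The paper keeps the inner operator undecomposed as $|\nabla|^{-2}P_{>N}$, which supplies $N^{-2}$ uniformly in the input frequency. It decomposes the \emph{output} $P_M\bigl(V_1|\nabla|^{-2}P_{>N}(V_1\psi)\bigr)$ and compares the frequency of $\psi$ with this output $M$:
\begin{itemize}
\item The $P_{<1}$ output ($R_{2,0}$) is handled directly using $V_1\in L^1\cap L^2$.
\item For $P_{\gg M}\psi$ ($R_{2,1}$), the Bernstein cost $M^{\frac12+\frac\theta2}$ of going from $L^1$ to $L^{r_1'}$ is paid by the decay $\|P_{M'}\psi\|_{L^{p/(p-2)}}\lesssim (M')^{-\frac12-3\theta}$.
\item For $P_{\lesssim M}\psi$ ($R_{2,2}$), the normal form with the linear phase gains $|\nabla|^{-2}\sim M^{-2}$ at the output frequency $M\geq1$. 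The time derivative then falls on $P_{\lesssim M}\psi$ and costs only $M^{1/2}$, leaving the summable factor $M^{-1+\frac\theta2}$, while $N^{-2}$ comes from the inner $|\nabla|^{-2}P_{>N}$.
\end{itemize}
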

	
	\begin{proposition}\label{prop:E1_pl2}
		Assume $\frac{d}{2} < p \leq 2$, $p \geq 1$. When $d=1, 2$, for any admissible pairs $(q, r)$ with $q \neq 2$, we have
		\begin{equation}\label{prop:E12_12}
			\| \mathcal{E}_1 \|_{l^{q}_\tau([0, T]; L^{r})} \lesssim
			\left\{
			\begin{aligned}
				&\tau^{1-\alpha}, && d = 1, \  p > 1, \\
				&\tau^{\frac{1}{2}^-}, && d = 1, \  p = 1, \\
				&\tau^{1^--\alpha}, && d = 2, \  1 < p \leq 2.  
			\end{aligned}
			\right. 
		\end{equation}
		When $d=3$, we have
		\begin{equation}\label{prop:E12_3}
			\| \mathcal{E}_1 \|_{l^{q_0}_\tau([0, T]; L^{r_0})} +  \| \mathcal{E}_1 \|_{l^{\infty}_\tau([0, T]; L^{2})} \lesssim \tau^{1^--\frac{3}{2}\alpha}. 
		\end{equation}
	\end{proposition}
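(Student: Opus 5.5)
For $d=1,2$ I would use the inhomogeneous discrete Strichartz estimate \cref{lem:dSc} with the dual end-point pair $(q_1,r_1)$ from \cref{eq:r1_def}. Since $q\neq 2$, the condition $(q,q_1)\neq(2,2)$ holds, so
\[
\| \mathcal{E}_1 \|_{l^{q}_\tau([0,T];L^{r})} \lesssim \| \Pi_\tau (V (I-\Pi_\tau)\psi) \|_{L^{q_1'}([0,T];L^{r_1'})} .
\]
Next I use the decomposition \cref{eq:V_decomp} and the uniform boundedness of $\Pi_\tau$ on $L^{r_1'}$ from \cref{eq:p1}. The $V_2$-part is placed in the dual pair $(\infty,2)$ instead, and it is bounded by $\|(I-\Pi_\tau)\psi\|_{L^\infty_tL^2}\lesssim \tau^{1-\alpha/2}$ via \cref{eq:p2,eq:regularity}. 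For the $V_1$-part I apply \cref{eq:V1} and then Sobolev together with \cref{eq:p2}:
\[
\|(I-\Pi_\tau)\psi\|_{L^{r_0}}\lesssim N_\tau^{-(2-\alpha-s)}\|\psi\|_{H^{2-\alpha}}, \qquad s=d\left(\tfrac12-\tfrac1{r_0}\right).
\]
Using \cref{eq:r0_def} one computes $s$ as follows. For $d=1$, $p>1$: $s=\alpha$. For $d=1$, $p=1$: $r_0=\infty$, so $s=\tfrac12^+$, which matches $\alpha=\tfrac12^+$. For $d=2$: $s=\alpha+\vep$. In every case the rate is $N_\tau^{-(2-2\alpha)^{+}}=\tau^{1-\alpha}$, $\tau^{\frac12^-}$ or $\tau^{1^--\alpha}$ respectively. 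Because $T<\infty$, the $L^\infty_t$ bound controls the $L^{q_1'}_t$ norm, and this gives \cref{prop:E12_12}.

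For $d=3$ the same direct argument loses too much. The end-point $r_1'=\frac65^+$ forces $s=\alpha+\frac12^+$, so one only gets $\tau^{\frac34-\alpha}$. To recover the rate I would follow the strategy described in the introduction. First decompose $(I-\Pi_\tau)\psi=\sum_{M\gtrsim N_\tau}P_M\psi$ via \cref{eq:dyadic_decomposition}. On each dyadic piece, use the equation, $-\Delta\psi=i\partial_t\psi-V\psi$, to write
\[
P_M\psi=(-\Delta)^{-1}P_M\bigl(i\partial_t\psi - V\psi\bigr),
\]
so that the factor $M^{-2}$ is now explicit.

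Each of the two resulting terms has an advantage. In the $\partial_t\psi$ term, the space-time bound \cref{eq:regularity} for $\langle\nabla\rangle^{-\alpha}\partial_t\psi$ in $L^{q_0}_tL^{r_0}_\vx$ is available. With Bernstein \cref{eq:bernstein}, this gives a contribution of size $M^{-2+\alpha}$ in $L^{q_0}_tL^{r_0}_\vx$, which after the Hölder step \cref{eq:V1} is summable to order $N_\tau^{-2+\alpha}$. In the $V\psi$ term, $V\psi\in L^\infty_tL^p$ because $\psi\in L^\infty$. Bernstein from $L^p$ to $L^{r_0}$ then costs $M^{3(1/p-1/r_0)}$, and the net power of $M$ is $-(\tfrac12-\tfrac32\alpha)^{-}$ more favourable than in the naive estimate. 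This is the $\frac12-\alpha$ derivative gain from \cref{eq:V_order}.

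Where the derivative cannot be moved onto $\psi$ across $V$, I would split $\psi$ into high and low frequencies within the product $V\psi$. When $\psi$ is at high frequency relative to $M$, the multiplier effectively falls on $\psi$. When $\psi$ is at low frequency, I would perform a partial normal form transformation, integrating by parts in $s$ in the Duhamel integral $\int_0^{n\tau}\Stau(n\tau-s)\cdots\rmd s$. This trades the spatial derivative for $\partial_s\psi$ composed with a negative-order multiplier, which is again handled by \cref{eq:regularity}. The resulting boundary terms are estimated in $L^\infty_tL^2$ via \cref{lem:dS1}.

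Collecting the pieces and summing over $M\gtrsim N_\tau$ (the $\vep$-losses produce the $1^-$), I expect the rate $N_\tau^{-(2-3\alpha)^-}=\tau^{1^--\frac32\alpha}$. The estimate holds both in $l^{q_0}_\tau L^{r_0}$ and in $l^\infty_\tau L^2$, since both are admissible outputs of \cref{lem:dSc}. The main obstacle is this 3D low-frequency interaction: making the normal form step rigorous with $V$ merely in $L^p$, controlling the boundary terms, and ensuring that the dyadic sums converge with only an $\vep$-loss.
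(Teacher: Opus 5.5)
Your $d=1,2$ argument is the paper's: the dual pair $(q_1,r_1)$ for $V_1$, with Sobolev index $s=\alpha,\ \frac12^+,\ \alpha+\vep$, and the pair $(\infty,2)$ for $V_2$. The 3D case, which is the substance of the proposition, has a genuine gap. The one concrete step you give there is wrong. By \cref{eq:embed}, $3(\frac1p-\frac1{r_0})=2\alpha+\frac12+\vep$, so Bernstein on the $V_1\psi$ term costs $M^{-2}M^{2\alpha+\frac12+\vep}=M^{-(\frac32-2\alpha)+\vep}$ per block. That is exactly the naive rate $\tau^{\frac34-\alpha}$, not a gain. This is structural: $L^p\hookrightarrow\dot H^{-\alpha}$, so $V_1\psi$ is no more regular than $-\Delta\psi$. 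Substituting the equation only pays off once you exploit the outer factor $V_1$ and the output frequency of the full forcing $V_1|\nabla|^{-2}P_{>N}(V_1\psi)$.

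Your high/low split is also taken relative to the wrong frequency. Integrating by parts in $s$ gains $|\vxi|^{-2}$ in the output frequency of that forcing. A rough $V_1$ decouples this from the frequency $M\gtrsim N_\tau$ of $P_M\psi$. If the output sits at $K\ll M$, the $\partial_s\psi$ term costs $\|P_{\lesssim M}\partial_s\psi\|_{L^\infty}\lesssim M^{\frac12+\alpha}$, against only $K^{-\frac32+\vep}$ from the normal form. The bookkeeping then gives $N_\tau^{-(\frac32-3\alpha)}$, which is no better than the naive bound.

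The paper never decomposes $|\nabla|^{-2}P_{>N}$; it decomposes the output instead. The piece $P_{<1}$ is the term $R_{2,0}$, bounded through $L^1$ and $p'\xrightarrow{2\alpha}p$. For the pieces $P_M$ with $M\ge1$, it splits $\psi$ by comparing its frequency with this output $M$. For $P_{\gg M}\psi$, \cref{lem:move_derivative} moves $\alpha+\vep+\delta$ derivatives across $V_1$ onto the inner factor at the price $M^{\frac12-\alpha-\delta}$. That price is paid by $\|P_{\gg M}\psi\|_{L^\infty}\lesssim M^{-\frac12+\alpha}$. For $P_{\lesssim M}\psi$, the normal form combined with Bernstein $L^1\to L^{r_1'}$ gives $M^{-\frac32+\vep}$, which absorbs $M^{\frac12+\alpha}$.

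Your treatment of the boundary terms also fails. The term at time $t$, namely $|\nabla|^{-2}\Pi_\tau\sum_M P_M(\cdots)(t)$, is neither a free evolution nor a Duhamel integral, so neither \cref{lem:dS1} nor \cref{lem:dSc} applies. It must be bounded directly in $l^{q_0}_\tau L^{r_0}$, via \cref{lem:move_derivative2}, and separately in $l^\infty_\tau L^2$. Together with $R_{2,1}$, its $l^{q_0}_\tau L^{r_0}$ bound is what produces the rate $N^{-(2-3\alpha)^-}$. This is also why the 3D statement covers only these two norms. Only the boundary term at $s=0$ is a free evolution that \cref{lem:dS1} handles.
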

	
	\begin{remark}
		In \cref{prop:E1_pg2,prop:E1_pl2}, compared to the estimates for $d=1,2$ (i.e. \cref{prop:E11_12,prop:E12_12}), the estimates for the 3D case (i.e. \cref{prop:E11_3,prop:E12_3}) are stated for two specific admissible pairs of use, where we note that $(q_0, r_0)$ depends on $p$. The reason is that we use the normal transformation to estimate $\mathcal{E}_1$ in 3D, and the estimates of the resulting boundary terms depend on the specific norm we use. Instead, the estimates \cref{prop:E11_12,prop:E12_12} are obtained by direct use of the discrete Strichartz estimates \cref{lem:dSc}, and thus hold uniformly for all admissible pairs.
	\end{remark}
	
	The remainder of this section is devoted to the proof of \cref{prop:E1_pg2,prop:E1_pl2} for the estimate of $\mathcal{E}_1$, which, by recalling \cref{eq:V_decomp,eq:E1} and noting $I - \Pi_\tau = P_{>N_\tau}$, can be decomposed as
	\begin{equation}\label{eq:E1_decomp}
		\mathcal{E}_1 = \mathcal{E}_{1, 1} + \mathcal{E}_{1, 2}, \qquad \mathcal{E}_{1, j}^n := - i \int_0^{n\tau} \Stau(n \tau - s) \Pi_\tau \left(V_j P_{>N_\tau} \psi(s) \right) \rmd s, \quad j = 1, 2. 
	\end{equation}
	
	\subsection{Proof of \texorpdfstring{\cref{prop:E1_pg2} for $p>2$}{Proposition 3.1 for p>2}}
	We first consider \cref{prop:E11_12} for the case $p > 2$ and $d=1, 2$. 
	\begin{proof}[Proof of \cref{prop:E11_12}]
		For $\mathcal{E}_{1, 1}$ in \cref{eq:E1_decomp}, using \cref{lem:Pi}, \cref{eq:V1}, and \cref{lem:dSc} with $(\tilde q, \tilde r) = (\frac{2p}{d}, \frac{2p}{p-2})$, we have
		\begin{align}\label{eq:E1_est}
			\| \mathcal{E}_{1, 1} \|_{l^{q}_\tau(I; L^{r})} 
			&\lesssim \| \Pi_\tau (V_1 P_{> N_\tau} \psi) \|_{L^{\tilde q'}([0, T]; L^{\tilde r'})} \lesssim \| V_1 \|_{L^p} \| P_{>N_\tau} \psi \|_{L^{\tilde q'}([0, T]; L^{2})} \notag \\
			&\lesssim \frac{|T|^{1 - \frac{d}{2p}}}{N_\tau^{2}}  \| \psi \|_{L^\infty([0, T]; H^{2})} \sim \tau.  
		\end{align}
		Similarly, for $\mathcal{E}_{1, 2}$ in \cref{eq:E1_decomp}, using \cref{lem:Pi} and \cref{lem:dSc} with $(\tilde q, \tilde r) = (\infty, 2)$, we have
		\begin{align}\label{eq:E2_est}
			\| \mathcal{E}_{1, 2} \|_{l^{q}_\tau(I; L^{r})} 
			&\lesssim \| \Pi_\tau (V_2 P_{> N_\tau} \psi) \|_{L^{1}([0, T]; L^{2})} \lesssim \| V_2 \|_{L^\infty} \| P_{>N_\tau} \psi \|_{L^{1}([0, T]; L^{2})} \notag \\
			&\lesssim \frac{|T|}{N_\tau^{2}}  \| \psi \|_{L^\infty([0, T]; H^{2})} \sim \tau.  
		\end{align}
		These complete the proof for \cref{prop:E11_12}. 		 
	\end{proof}
	
	Next, we consider the 3D case \cref{prop:E11_3}. The estimate of $\mathcal{E}_{1, 2}$ remains the same as that of \cref{eq:E2_est} in 3D. However, the estimate of $\mathcal{E}_{1, 1}$ is different as the choice of $\tilde r = \frac{2p}{p-2}$ may no longer be admissible in 3D. 
	To estimate $\mathcal{E}_{1, 1}$ in 3D, we define
	\begin{equation}\label{eq:R_def}
		R = R(t) := \int_0^t S_\tau(t-s) \Pi_\tau( V_1 P_{>N} \psi(s)) \rmd s, \qquad N \geq 1. 
	\end{equation}
	We note that $\mathcal{E}_{1,1}^n = R(n\tau)$ in \cref{eq:R_def} with $N = N_\tau$. Hence, it suffices to establish the estimate for $R$, which is in fact independent of the temporal discretization and may be of independent interest. This is also the reason why we do not fix $N = N_\tau$ in the definition of $R$.  
	
	For $R$ \cref{eq:R_def}, we have the following. 
	\begin{proposition}\label{prop:R1}
		When $d=3$ and $p>2$, for any $N \geq 1$, we have
		\begin{equation*}
			\| R \|_{l^{q_0}_\tau{([0, T]; L^{r_0})}} + \| R \|_{l^{\infty}_\tau{([0, T]; L^{2})}} \lesssim \frac{1}{N^2}. 
		\end{equation*}
	\end{proposition}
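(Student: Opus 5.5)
The plan is to bound $R$ through the dual endpoint Strichartz estimate \cref{lem:dSc} with $(\tilde q,\tilde r)=(q_1,r_1)$. This requires controlling $V_1 P_{>N}\psi$ in $L^{q_1'}([0,T];L^{r_1'})$, roughly $L^2_tL^{6/5}_\vx$. By \cref{eq:V1}, this reduces to bounding $P_{>N}\psi$ in $L^{q_1'}_tL^{r_0}_\vx$, where for $p>2$ we have $2\le r_0<3$.

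The direct route only uses $\psi\in L^\infty_tH^2$. Bernstein/Sobolev then gives $\|P_{>N}\psi\|_{L^{r_0}}\lesssim N^{-2+3(\frac12-\frac1{r_0})}$. This loses up to $N^{1/2}$, which is exactly the 3D loss described in the introduction. To avoid it, I will not estimate $\Delta\psi$ directly. Instead I write
\begin{equation*}
P_{>N}\psi=|\nabla|^{-2}P_{>N}\bigl(i\partial_t\psi - V\psi\bigr),
\end{equation*}
which is legitimate because $P_{>N}$ removes frequencies below $N\ge1$. This splits $R$ into $R_a$, coming from $\partial_t\psi$, and $R_b$, coming from $V\psi$.

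For $R_a$, note that $|\nabla|^{-2}P_{>N}$ is bounded on every $L^r$ with norm $\lesssim N^{-2}$ by \cref{eq:p2}. Hence
\begin{equation*}
\|V_1|\nabla|^{-2}P_{>N}\partial_t\psi\|_{L^{q_1'}_tL^{r_1'}}\lesssim N^{-2}\|V_1\|_{L^p}\|\partial_t\psi\|_{L^{q_1'}_tL^{r_0}}.
\end{equation*}
Since $\alpha=0$ for $p>2$, \cref{eq:regularity} gives $\partial_t\psi\in L^{q_0}_tL^{r_0}_\vx$ with $(q_0,r_0)$ admissible. We have $q_0\ge q_1'$, so Hölder in time on $[0,T]$ closes this piece with a bound $N^{-2}$, for both the $l^{q_0}_\tau L^{r_0}$ norm and the $l^\infty_\tau L^2$ norm on the left.

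The main obstacle is $R_b$, namely the contribution of $V_1|\nabla|^{-2}P_{>N}(V\psi)$. Here $V\psi$ is only in $L^p+L^\infty$. To bound it I split the inner product with a high-low decomposition, $\psi=P_{\gtrsim N}\psi+P_{\ll N}\psi$ inside $V\psi$.

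In the high part, $\psi$ carries frequencies $\gtrsim N$. I will use \cref{eq:V_order}, i.e.\ $V$ is relatively bounded by $|\nabla|^{\frac3p^+}$ with $\frac3p<\frac32$. Combined with Sobolev for $|\nabla|^{-2}:L^{p}\to L^{r_0}$ and $\|P_{\gtrsim N}\psi\|_{L^\infty}\lesssim N^{-1/2}\|\psi\|_{H^2}$, this should recover the missing half derivative and give $N^{-2}$ overall. In the low part, $P_{\ll N}\psi$ is smooth at scale $N$, so the outer $P_{>N}$ forces $V$ itself to carry the high frequency, and no derivative can be moved onto $\psi$. Here I will apply a partial normal form transformation in the Duhamel integral. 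On the Fourier side, the phase $e^{i(t-s)\Delta}$ acting on frequencies $\gtrsim N$, compared with the slowly varying $P_{\ll N}\psi(s)$, gives a non-resonant factor of size $\gtrsim N^2$. Integrating by parts in $s$ then produces two kinds of terms. The boundary terms at $s=0$ and $s=t$ are of the form $|\nabla|^{-2}P_{>N}(V P_{\ll N}\psi)$ and are estimated in the specific norms $l^{q_0}_\tau L^{r_0}$ and $l^\infty_\tau L^2$; this is why the statement is restricted to these norms. The bulk term involves $\partial_s P_{\ll N}\psi$ with a negative-order multiplier, and it is again handled by \cref{eq:regularity} and \cref{lem:dSc}.

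Checking that the boundary terms indeed gain $N^{-2}$ is the step I expect to be most delicate. I would first try Bernstein in $L^{r_0}$ combined with $V_1\in L^p$, $p>2$, and if necessary take the decomposition $V_1\in L^1\cap L^p$ to absorb the low-frequency tails. The $V_2$ parts are harmless throughout because $V_2\in L^\infty$.
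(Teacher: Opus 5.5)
Your $\partial_t\psi$ piece is the paper's $R_1$. Your high part (inner $\psi$ at frequencies $\gtrsim N$) is also correct, and simpler than the paper's treatment of that range. For $p$ close to $2$, which one may assume, $|\nabla|^{-2}P_{>N}:L^p\to L^{r_0}$ costs $N^{-\frac32-}$, and $\|P_{\gtrsim N}\psi\|_{L^\infty}\lesssim N^{-\frac12}\|\psi\|_{H^2}$ supplies the remaining half power.

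\textbf{The gap is in the low part} $V_1|\nabla|^{-2}P_{>N}(V_1P_{\ll N}\psi)$. The phase $e^{i(t-s)\Delta}$ sees the frequency $\vxi$ of the \emph{whole} forcing, i.e.\ of the product of the outer $V_1$ with $|\nabla|^{-2}P_{>N}(\cdots)$. Since $V_1$ is rough, this product has all frequencies, not only $|\vxi|\gtrsim N$. Integrating by parts in $s$ therefore produces $|\vxi|^{-2}$ at the output frequency, not $N^{-2}$. This causes two failures:
\begin{itemize}
\item Near $\vxi=0$ the factor is singular. Your boundary terms really are $|\nabla|^{-2}\Pi_\tau\bigl(V_1|\nabla|^{-2}P_{>N}(V_1P_{\ll N}\psi)\bigr)$. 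The function inside is in $L^1$ with nonzero mean in general, so these terms are not even in $L^2(\R^3)$.
\item At an output frequency $M$ with $1\le M\ll N$, the gain is only $M^{-2}$. The bulk term needs $\|P_{\ll N}\partial_s\psi\|_{L^\infty}\lesssim N^{\frac12}\|\partial_s\psi\|_{L^6}$, so it yields about $\sum_{M} M^{-\frac32+}N^{-2}N^{\frac12}\sim N^{-\frac32}$.
\end{itemize}
That is exactly the half-derivative loss you set out to remove. The direct bound on the low part, without any normal form, also gives only $N^{-\frac32}$.

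\textbf{The missing idea} is to localize the \emph{output} frequency and split $\psi$ relative to it, not relative to $N$. Write the forcing as $P_{<1}(\cdot)+\sum_{M\ge 1}P_M(\cdot)$, and inside use $\psi=P_{\gg M}\psi+P_{\lesssim M}\psi$.
\begin{itemize}
\item The $P_{<1}$ piece is bounded directly using $V_1\in L^1\cap L^p\subset L^2$.
\item For $P_{\gg M}\psi$, no normal form is used. This piece contains the range between $M$ and $N$ that your low part covers. Output Bernstein from $L^1$ to $L^{r_1'}$ costs $M^{\frac12+\frac\theta2}$, and the inner $|\nabla|^{-2}P_{>N}$ on $L^{p'}$ gives $N^{-2}$. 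Then $\|P_{M'}\psi\|_{L^{p/(p-2)}}\lesssim (M')^{-\frac12-3\theta}\|\psi\|_{H^2}$ with $M'\gg M$ makes the double sum converge.
\item Only for $P_{\lesssim M}\psi$ is the normal form performed. Its gain $M^{-2}$ pays for $\|P_{\lesssim M}\partial_t\psi\|_{L^\infty}\lesssim M^{\frac12}\|\partial_t\psi\|_{L^6}$ and for the Bernstein losses of $|\nabla|^{-2}P_M$ from $L^1$ into $L^{r_0}$, $L^2$ and $L^{r_1'}$. The factor $N^{-2}$ again comes solely from the inner $|\nabla|^{-2}P_{>N}$, using $V_1\in L^2$ and $\psi\in L^\infty$.
\end{itemize}
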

	
	\begin{proof}
		We make the following specific choice of $\vep$ in \cref{eq:r1_def}. As $p>2$, we may assume that $p$ is arbitrarily close to $2$ such that $p = \frac{2}{1-\theta}$ for some $\theta > 0$ sufficiently small. Then, we choose $\vep = \theta/2$ in \cref{eq:r1_def} such that from \cref{eq:r1_def,eq:r0_def},
		\begin{equation}
			(q_0, r_0) = \left( \frac{4}{1-2\theta}, \frac{3}{1+\theta} \right),  
		\end{equation}
		which can be arbitrarily close to the admissible pair $(4, 3)$ in 3D. 
		
		We also summarize in the following some index relations to be used in the Bernstein inequality \cref{eq:bernstein} and Sobolev embedding: for $p > 2$ and $d=3$, 
		\begin{equation}\label{eq:embed1}
			r_0 \xrightarrow{2-\theta}1, \qquad r_1' \xrightarrow{\frac{1+\theta}{2}} 1, 
		\end{equation}
		where, for any $1 \leq p_1 < p_2 \leq \infty$ and $s>0$, $ p_2 \xrightarrow{s} p_1$ represents the relation
		\begin{equation}\label{eq:idx_relation}
			3(\frac{1}{p_1} - \frac{1}{p_2}) = s.  
		\end{equation}
		
		Note that $R$ \cref{eq:R_def} can be equivalently written as
		\begin{equation}\label{eq:R_rewrite}
			R = \int_0^t S_\tau(t-s) \Pi_\tau ( V_1 |\nabla|^{-2} P_{>N} (-\Delta) \psi(s) ) \rmd s. 
		\end{equation}
		According to the discussion in the introduction, we plug \cref{NLSE} into \cref{eq:R_rewrite} to obtain 
		\begin{equation}\label{eq:R_decomp_1}
			R = R_1 - R_2, 
		\end{equation}
		where
		\begin{align}
			&R_1 = R_1(t) := \int_0^t S_\tau(t-s) \Pi_\tau ( V_1 |\nabla|^{-2} P_{>N} (i\partial_t \psi(s) - V_2 \psi(s)) ) \rmd s, \label{eq:R1_def} \\
			&R_2 = R_2(t) :=	\int_0^t S_\tau(t-s) \Pi_\tau ( V_1 |\nabla|^{-2} P_{>N} (V_1 \psi(s)) ) \rmd s. \label{eq:R2_def} 
		\end{align}
		For $R_1$, using \cref{lem:dSc}, \cref{eq:V1}, \cref{lem:Pi}, Holder's inequality, the space-time estimates of $\partial_t \psi$ \cref{eq:regularity}, and $\psi(t) \in L^2(\R^3) \cap L^\infty(\R^3)$, we have, for $(q, r) \in \{(q_0, r_0), (\infty, 2)\}$, 
		\begin{align}\label{eq:R1}
			\| R_1 \|_{l^{q}_\tau{([0, T]; L^{r})}} 
			&\lesssim \| V_1 |\nabla|^{-2}P_{>N} (i\partial_t \psi - V_2 \psi) \|_{L^{q_1'}{([0, T]; L^{r_1'})}} \notag \\
			&\leq \| V_1 \|_{L^p} (\| \, |\nabla|^{-2}P_{>N} \partial_t \psi \|_{L^{q_1'}{([0, T]; L^{r_0})}} + \| \, |\nabla|^{-2}P_{>N} V_2 \psi \|_{L^{q_1'}{([0, T]; L^{r_0})}}) \notag \\
			&\lesssim\frac{|T|^{1-\frac{1}{q_0} - \frac{1}{q_1}}}{N^{2}} \| \partial_t \psi \|_{L^{q_0}{([0, T]; L^{r_0})}} + \frac{|T|^{1- \frac{1}{q_1}}}{N^{2}} \| V_2 \psi \|_{L^{\infty}{([0, T]; L^{r_0})}} \notag \\
			&\lesssim \frac{1}{N^{2}} + \frac{1}{N^2}\|V_2\|_{L^\infty} \| \psi \|_{L^\infty([0, T]; L^{r_0})} \lesssim \frac{1}{N^2}.  
		\end{align}
		
		It remains to estimate $R_2$. We further decompose $R_2$ as
		\begin{equation}\label{eq:R2_decomp}
			R_2 = R_{2,0} + R_{2,1} + R_{2,2}, 
		\end{equation}
		where, we have, recalling \cref{eq:dyadic_decomposition,eq:PN},  
		\begin{align}
			&R_{2,0} = \int_0^t S_\tau(t-s) \Pi_\tau P_{< 1} ( V_1 |\nabla|^{-2} P_{>N} (V_1 \psi(s)) ) \rmd s, \label{eq:R20_def}\\
			&R_{2,1} = \int_0^t S_\tau(t-s) \Pi_\tau \sum_{M \geq 1} P_M ( V_1 |\nabla|^{-2} P_{>N} (V_1 P_{\gg M} \psi(s)) ) \rmd s, \label{eq:R21_def}\\
			&R_{2,2} = \int_0^t S_\tau(t-s) \Pi_\tau \sum_{M \geq 1} P_M ( V_1 |\nabla|^{-2} P_{>N} (V_1 P_{\lesssim M} \psi(s)) ) \rmd s. \label{eq:R22_def}
		\end{align}
		
		In the following, we estimate $R_{2,0}$, $R_{2,1}$, and $R_{2,2}$, respectively. We start with $R_{2, 0}$. By \cref{lem:dSc,lem:Pi}, for $(q, r) \in \{(q_0, r_0), (\infty, 2)\}$, since $V_1 \in L^1(\R^3) \cap L^{p}(\R^3)$, we have
		\begin{align}\label{eq:R20}
			\| R_{2, 0} \|_{l^{q}_\tau{([0, T]; L^{r})}} 
			&\lesssim \| P_{< 1} ( V_1 |\nabla|^{-2} P_{>N} (V_1 \psi) )  \|_{L^{q_1'}{([0, T]; L^{r_1'})}} \notag \\
			&\lesssim \| V_1 |\nabla|^{-2} P_{>N} (V_1 \psi) \|_{L^{q_1'}{([0, T]; L^1)}} \notag \\
			&\lesssim \| V_1 \|_{L^2} \| \, |\nabla|^{-2} P_{>N} (V_1 \psi) \|_{L^{q_1'}{([0, T]; L^{2})}} \notag \\
			&\lesssim \frac{\| V_1 \|_{L^2}}{N^2}\| V_1 \psi \|_{L^{q_1'}{([0, T]; L^2)}} \notag \\
			&\lesssim |T|^{1-\frac{1}{q_1}} \frac{\| V_1 \|_{L^2}^2}{N^2} \| \psi \|_{L^{\infty}{([0, T]; L^\infty)}} \lesssim \frac{1}{N^2}. 
		\end{align}
		Then we estimate $R_{2, 1}$ where $\psi$ is of high-frequency. We have, for $(q, r) \in \{(q_0, r_0), (\infty, 2)\}$, by \cref{lem:dSc,lem:Pi}, recalling $r_1' \xrightarrow{\frac{1+\theta}{2}} 1$ in \cref{eq:embed1}, 
		\begin{align}\label{eq:R21_1}
			\| R_{2, 1} \|_{l^{q}_\tau{([0, T]; L^{r})}} 
			&\lesssim \| \sum_{M \geq 1} P_M ( V_1 |\nabla|^{-2} P_{>N} (V_1 P_{\gg M} \psi) ) \|_{L^{q_1'}{([0, T]; L^{r_1'})}} \notag \\
			&\lesssim  \sum_{M \geq 1} \| P_M ( V_1 |\nabla|^{-2} P_{>N} (V_1 P_{\gg M} \psi) ) \|_{L^{q_1'}{([0, T]; L^{r_1'})}} \notag \\
			&\lesssim \sum_{M \geq 1} M^{\frac{1}{2}+\frac{\theta}{2}} \| V_1 |\nabla|^{-2} P_{>N} (V_1 P_{\gg M} \psi) \|_{L^{q_1'}{([0, T]; L^{1})}} \notag \\
			&\lesssim \| V_1 \|_{L^p} \sum_{M \geq 1} M^{\frac{1}{2}+\frac{\theta}{2}} \| \, |\nabla|^{-2} P_{>N} (V_1 P_{\gg M} \psi) \|_{L^{q_1'}{([0, T]; L^{p'})}} \notag \\
			&\lesssim \frac{\| V_1 \|_{L^p}}{N^2} \sum_{M \geq 1} M^{\frac{1}{2}+\frac{\theta}{2}} \|V_1 P_{\gg M} \psi\|_{L^{q_1'}{([0, T]; L^{p'})}} \notag \\
			&\lesssim \frac{\| V_1 \|^2_{L^p}}{N^2} \sum_{M \geq 1} M^{\frac{1}{2}+\frac{\theta}{2}} \|P_{\gg M} \psi\|_{L^{q_1'}{([0, T]; L^\frac{p}{p-2})}} \notag \\
			&\lesssim \frac{\| V_1 \|^2_{L^p}}{N^2} \sum_{M \geq 1} M^{\frac{1}{2}+\frac{\theta}{2}} \sum_{M' \gg M} (M')^{\frac{6}{p} - \frac{3}{2}} (M')^{-2} \| \, |\nabla|^{2} P_{M'} \psi\|_{L^{q_1'}{([0, T]; L^2)}} \notag \\
			&\lesssim \frac{|T|^{1-\frac{1}{q_1}}}{N^2}  \| \psi \|_{L^\infty([0, T]; H^2)} \sum_{M \geq 1} \sum_{M' \gg M} M^{\frac{1}{2}+\frac{\theta}{2}}  (M')^{-\frac{1}{2} - 3\theta} \lesssim \frac{1}{N^2}, 
		\end{align}
		where, in the last inequality, recalling that $M$ is taken over dyadic numbers, 
		\begin{equation}\label{eq:sumM}
			\sum_{M \geq 1} \sum_{M' \gg M} M^{\frac{1}{2}+\frac{\theta}{2}}  (M')^{-\frac{1}{2} - 3\theta} \lesssim \sum_{M \geq 1} M^{\frac{1}{2}+\frac{\theta}{2}}  M^{-\frac{1}{2} - 3\theta} = \sum_{M \geq 1} M^{-\frac{5}{2}\theta} < \infty. 
		\end{equation}
		
		Next, we consider $R_{2, 2}$, where $\psi$ is of low frequency, by performing a partial normal form transformation. Denoting
		\begin{equation}
			W(t) = \Pi_\tau \sum_{M \geq 1} P_M ( V_1 |\nabla|^{-2} P_{>N} (V_1 P_{\lesssim M} \psi(t)) ), 
		\end{equation}
		and taking the Fourier transform, we get
		\begin{equation}
			\widehat{R_{2, 2}}(\vxi) = e^{- i t |\vxi|^2} \int_0^t e^{i s |\vxi|^2} \widehat{W}(\vxi, s) \rmd s. 
		\end{equation}
		Note that $\widehat{W}(\vxi, t) = 0$ when $|\vxi| \leq 1/2$. By integration by parts, we have 
		\begin{equation}
			\widehat{R_{2, 2}}(\vxi) = \frac{1}{i|\vxi|^2}\left( \widehat{W}(\vxi, t) - e^{- i t |\vxi|^2} \widehat{W}(\vxi, 0) - \int_0^t e^{- i (t-s)|\vxi|^2} \partial_s \widehat{W}(\vxi, s) \rmd s \right),  
		\end{equation}
		which yields, by the inverse Fourier transform, 
		\begin{align}\label{eq:R22_integrationbyparts}
			iR_{2,2} 
			&= |\nabla|^{-2} \Pi_\tau \sum_{M \geq 1} P_M \left( V_1 |\nabla|^{-2} P_{>N} (V_1 P_{\lesssim M} \psi(t)) \right) \notag \\
			&\quad -  S_\tau(t) |\nabla|^{-2} \Pi_\tau  \sum_{M \geq 1} P_M \left( V_1 |\nabla|^{-2} P_{>N} (V_1 P_{\lesssim M} \psi_0) \right) \notag \\
			&\quad + \int_0^t S_\tau(t-s) |\nabla|^{-2} \Pi_\tau \sum_{M \geq 1} P_M \left( V_1 |\nabla|^{-2} P_{>N} (V_1 P_{\lesssim M} \partial_t \psi(s)) \right) \rmd s \notag \\
			&=: R_{2,2,1} + R_{2,2,2} + R_{2,2,3}. 
		\end{align}
		For $R_{2,2,1}$, we have, by \cref{lem:Pi} and recalling $r_0 \xrightarrow{2-\theta}1$ in \cref{eq:embed1}, 
		\begin{align}\label{eq:R221_1}
			\| R_{2,2,1} \|_{l^{q_0}_\tau{([0, T]; L^{r_0})}}
			&\leq \sum_{M \geq 1} \| P_M |\nabla|^{-2} \Pi_\tau \left( V_1 |\nabla|^{-2} P_{>N} (V_1 P_{\lesssim M} \psi) \right) \|_{l^{q_0}_\tau{([0, T]; L^{r_0})}} \notag \\
			&\lesssim \sum_{M \geq 1} M^{-\theta} \|V_1 (|\nabla|^{-2} P_{>N} (V_1 P_{\lesssim M} \psi)) \|_{l^{q_0}_\tau{([0, T]; L^{1})}} \notag \\
			&\lesssim \| V_1 \|_{L^2} \sum_{M \geq 1} M^{-\theta} \|\,|\nabla|^{-2} P_{>N} (V_1 P_{\lesssim M} \psi) \|_{l^{q_0}_\tau{([0, T]; L^{1})}} \notag \\
			&\lesssim \frac{\| V_1 \|^2_{L^2}}{N^2} \sum_{M \geq 1} M^{-\theta} \| P_{\lesssim M} \psi \|_{l^{q_0}_\tau{([0, T]; L^\infty)}} \notag \\
			& \lesssim \frac{|T|^\frac{1}{q_0}}{N^2} \| \psi \|_{L^\infty{([0, T]; L^\infty)}} \sum_{M \geq 1} M^{-\theta} \lesssim \frac{1}{N^2},  
		\end{align}
		and, similarly, 
		\begin{align}\label{eq:R221_2}
			\| R_{2,2,1} \|_{l^{\infty}_\tau{([0, T]; L^{2})}}
			&\lesssim \sum_{M \geq 1} \| P_M |\nabla|^{-2} \Pi_\tau \left( V_1 |\nabla|^{-2} P_{>N} (V_1 P_{\lesssim M} \psi) \right) \|_{l^{\infty}_\tau{([0, T]; L^{2})}} \notag \\
			&\lesssim \sum_{M \geq 1} M^{-\frac{1}{2}} \|V_1 (|\nabla|^{-2} P_{>N} (V_1 P_{\lesssim M} \psi)) \|_{l^{q_0}_\tau{([0, T]; L^{1})}} \notag \\
			&\lesssim \| V_1 \|_{L^2} \sum_{M \geq 1} M^{-\frac{1}{2}} \| \, |\nabla|^{-2} P_{>N} (V_1 P_{\lesssim M} \psi) \|_{l^{q_0}_\tau{([0, T]; L^{2})}} \notag \\
			&\lesssim \frac{\| V_1 \|^2_{L^2}}{N^2} \sum_{M \geq 1} M^{-\frac{1}{2}} \| P_{\lesssim M} \psi \|_{l^{q_0}_\tau{([0, T]; L^\infty)}} \lesssim \frac{1}{N^2}.  
		\end{align}
		Then we estimate $R_{2,2,2}$. For $(q, r) \in \{(q_0, r_0), (\infty, 2)\}$, using \cref{lem:dS1,lem:Pi}, 
		\begin{align}\label{eq:R222}
			\| R_{2,2,2} \|_{l^{q}_\tau{([0, T]; L^{r})}}
			&\lesssim \| \, |\nabla|^{-2}  \sum_{M \geq 1} P_M ( V_1 |\nabla|^{-2} P_{>N} (V_1 P_{\lesssim M} \psi_0)) \|_{L^2} \notag \\
			&\lesssim \sum_{M \geq 1} \| P_M |\nabla|^{-2} ( V_1 |\nabla|^{-2} P_{>N} (V_1 P_{\lesssim M} \psi_0)) \|_{L^2} \notag \\
			&\lesssim  \sum_{M \geq 1} M^{-\frac{1}{2}} \| V_1 |\nabla|^{-2} P_{>N} (V_1 P_{\lesssim M} \psi_0) \|_{L^1} \notag \\
			&\lesssim \frac{\| V_1 \|^2_{L^2}}{N^2} \sum_{M \geq 1} M^{-\frac{1}{2}} \| P_{\lesssim M} \psi_0 \|_{L^\infty} \notag \\ 
			&\lesssim \frac{1}{N^2} \| \psi_0 \|_{L^\infty} \lesssim \frac{1}{N^2}.   
		\end{align}
		Finally, we consider $R_{2,2,3}$. We have, for $(q, r) \in \{(q_0, r_0), (\infty, 2)\}$, using \cref{lem:dSc,lem:Pi}, recalling $r_1' \xrightarrow{\frac{1+\theta}{2}} 1$ in \cref{eq:embed1}, 
		\begin{align}\label{eq:R223}
			\| R_{2,2,3} \|_{l^{q}_\tau{([0, T]; L^{r})}} 
			&\lesssim \| \, |\nabla|^{-2} \Pi_\tau \sum_{M \geq 1} P_M ( V_1 |\nabla|^{-2} P_{>N} (V_1 P_{\lesssim M} \partial_t \psi) ) \|_{L^{q_1'}{([0, T]; L^{r_1'})}} \notag \\
			&\lesssim  \sum_{M \geq 1} \| P_M |\nabla|^{-2} ( V_1 |\nabla|^{-2} P_{>N} (V_1 P_{\lesssim M} \partial_t \psi) ) \|_{L^{q_1'}{([0, T]; L^{r_1'})}} \notag \\
			&\lesssim  \sum_{M \geq 1} M^{-\frac{3}{2}+\frac{\theta}{2}} \| V_1 |\nabla|^{-2} P_{>N} (V_1 P_{\lesssim M} \partial_t \psi) \|_{L^{q_1'}{([0, T]; L^{1})}} \notag \\
			&\lesssim \| V_1 \|_{L^2} \sum_{M \geq 1} M^{-\frac{3}{2}+\frac{\theta}{2}} \|\, |\nabla|^{-2} P_{>N} (V_1 P_{\lesssim M} \partial_t \psi) \|_{L^{q_1'}{([0, T]; L^{2})}} \notag \\
			&\lesssim \frac{\| V_1 \|_{L^2}^2}{N^2} \sum_{M \geq 1} M^{-\frac{3}{2} + \frac{\theta}{2}} \|  P_{\lesssim M} \partial_t \psi \|_{L^{q_1'}{([0, T]; L^{\infty})}} \notag \\
			&\lesssim \frac{\| V_1 \|_{L^2}^2}{N^2} |T|^{1 - \frac{1}{q_1} - \frac{1}{2}} \sum_{M \geq 1} M^{-\frac{3}{2} +\frac{\theta}{2}} M^{\frac{1}{2}} \|  \partial_t \psi \|_{L^{2}{([0, T]; L^6)}} \lesssim \frac{1}{N^2}. 
		\end{align}
		Combining \cref{eq:R221_1,eq:R221_2,eq:R222,eq:R223}, we obtain the estimate of $R_{2,2}$ by \cref{eq:R22_integrationbyparts}, which, together with \cref{eq:R20,eq:R21_1} concludes the estimate of $R_2$ in \cref{eq:R_decomp_1}. Then the proof is completed. 
	\end{proof}
	
	With \cref{prop:R1}, \cref{prop:E11_3} is a direct corollary, and we omit the proof. The proof of \cref{prop:E1_pg2} is completed. 
	
	\subsection{Proof of \texorpdfstring{\cref{prop:E1_pl2} for $p \leq 2$}{Proposition 3.2 for p <=2}}
	We consider the case $\frac{d}{2} < p \leq 2$ and $p \geq 1$ in this subsection. For $\vep$ in \cref{eq:r1_def}, we simply choose $\vep$ sufficiently small. We start with \cref{prop:E12_12} for the case $d=1, 2$.
	
	\begin{proof}[Proof of \cref{prop:E12_12}]
		Recalling \cref{eq:E1_decomp}, for $\mathcal{E}_{1, 2}$, using \cref{lem:Pi}, \cref{lem:dSc} with $(\tilde q, \tilde r) = (\infty, 2)$, for any admissible pairs $(q, r)$ with $q \neq 2$, we have
		\begin{align}\label{E12}
			\| \mathcal{E}_{1, 2} \|_{l^{q}_\tau(I; L^{r})} 
			&\lesssim \| \Pi_\tau V_2 P_{>N_\tau} \psi \|_{L^{1}([0, T]; L^{2})} \notag \\
			&\lesssim \| V_2 \|_{L^\infty} \| P_{>N_\tau} \psi \|_{L^{1}([0, T]; L^{2})} \notag \\
			&\lesssim \frac{|T|}{N_\tau^{2-\alpha}} \| \, |\nabla|^{2-\alpha} \psi \|_{L^\infty([0, T]; L^{2})} \sim \tau^{1 - \frac{\alpha}{2}}. 
		\end{align}
		Then we estimate $\mathcal{E}_{1, 1}$. We consider the case $d=1$ and $d=2$ separately. When $d=1$, for any admissible pair $(q, r)$ with $q \neq 2$, using \cref{lem:Pi} and \cref{lem:dSc} with $(\tilde q, \tilde r) = (q_1, r_1)$, recalling \cref{eq:r1_def,eq:V1}, we have, by the Sobolev embedding $H^\alpha \hookrightarrow L^{r_0}$ (recalling that $\alpha = \frac{1}{2}^+$ if $r_0 = \infty$),
		\begin{align}\label{E11_1D}
			\| \mathcal{E}_{1, 1} \|_{l^{q}_\tau(I; L^{r})} 
			&\lesssim \| \Pi_\tau (V_1 P_{>N_\tau} \psi ) \|_{L^{q_1'}([0, T]; L^{r_1'})} \notag \\
			&\lesssim \| V_1 \|_{L^p} \| P_{>N_\tau} \psi \|_{L^{q_1'}([0, T]; L^{r_0})} \notag \\
			&\lesssim \| P_{>N_\tau} |\nabla|^\alpha \psi \|_{L^{q_1'}([0, T]; L^{2})} \notag \\
			&\lesssim \frac{|T|^{1 - \frac{1}{q_1}}}{N_\tau^{2- 2\alpha}}\| \psi \|_{L^\infty([0, T]; H^{2-\alpha})} \sim \tau^{1-\alpha}.
		\end{align}
		When $d=2$, similar to \cref{E11_1D}, for any admissible pair $(q, r)$ with $q \neq 2$, using \cref{lem:dSc,lem:Pi}, recalling the Sobolev embedding $H^{2(\frac{1}{2} - \frac{1}{r_0})} \hookrightarrow L^{r_0}$ and verifying that $2(\frac{1}{2} - \frac{1}{r_0}) = \alpha + \frac{2}{r_1}$ when $p\leq2$, we have
		\begin{align}\label{E11_2D}
			\| \mathcal{E}_{1, 1} \|_{l^{q}_\tau(I; L^{r})} 
			&\lesssim \| \Pi_\tau \left(V_1 P_{>N_\tau} \psi \right) \|_{L^{q_1'}([0, T]; L^{r_1'})} \notag \\
			&\lesssim \| V_1 \|_{L^p} \| P_{>N_\tau} \psi \|_{L^{q_1'}([0, T]; L^{r_0})} \notag \\
			&\lesssim \| P_{>N_\tau} |\nabla|^{\alpha + \vep} \psi \|_{L^{q_1'}([0, T]; L^{2})} \lesssim \frac{|T|^{1 - \frac{1}{q_1}}}{N_\tau^{2-2\alpha - \vep}}  \| \psi \|_{L^\infty([0, T]; H^{2-\alpha})} \sim \tau^{1^--\alpha}, && 1 < p \leq 2. 
		\end{align}
		Combining \cref{E11_1D,E11_2D,E12} proves \cref{prop:E12_12}.
	\end{proof} 
	
	Then we consider the 3D case \cref{prop:E12_3}. The estimate of $\mathcal{E}_{1, 2}$ remains the same as \cref{E12}. However, the estimate of $\mathcal{E}_{1, 1}$ requires a refined analysis. Similar to \cref{prop:R1}, we establish a general result in terms of $R$ \cref{eq:R_def}, which may be of independent interest. 
	
	In fact, the same arguments of using discrete Strichartz estimates and Sobolev embeddings as in \cref{E11_1D,E11_2D} can be carried over to the 3D case. However, this can only give a suboptimal error bound of order $O(N_\tau^{-(\frac{3}{2} - 2\alpha)})$ (up to some $\vep$ loss of order). In comparison, with a more refined estimate of using the normal form transformation and the high-low frequency decomposition as in the proof of \cref{prop:R1}, we are able to improve such estimate to $O(N_\tau^{-(2 - 3\alpha)})$, hence, gaining $(\frac{1}{2} - \alpha)$-order.
	
	\begin{proposition}\label{prop:R2}
		When $d=3$ and $\frac{3}{2} < d \leq 2$, for any $N \geq 1$, we have
		\begin{equation*}
			\| R \|_{l^{q_0}_\tau{([0, T]; L^{r_0})}} + \| R \|_{l^{\infty}_\tau{([0, T]; L^{2})}} \lesssim \frac{1}{N^{2^- - 3\alpha}}, \qquad  \frac{3}{2} < p \leq 2. 
		\end{equation*}
	\end{proposition}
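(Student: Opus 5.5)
We follow the structure of the proof of \cref{prop:R1}, now using only $\psi\in L^\infty([0,T];H^{2-\alpha})$ and the space-time bound $\|\langle\nabla\rangle^{-\alpha}\partial_t\psi\|_{L^q_tL^r_\vx}\lesssim 1$ from \cref{eq:regularity}. Here $\frac{3}{2}<p\le 2$ and $\alpha=3(\frac1p-\frac12)$. We take $\vep$ in \cref{eq:r1_def} small, so that $r_0$ is determined by \cref{eq:r0_def}. The index relations replacing \cref{eq:embed1} are
\begin{equation*}
r_1'\xrightarrow{\frac12^+}1,\qquad r_0\xrightarrow{\frac{3}{2}-\alpha^{+}}1,\qquad 2\xrightarrow{\frac32}1 .
\end{equation*}
The key idea is again to write $P_{>N}\psi=|\nabla|^{-2}P_{>N}(-\Delta)\psi$ and substitute the equation $-\Delta\psi=i\partial_t\psi-V_2\psi-V_1\psi$. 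This gives $R=R_1-R_2$ exactly as in \cref{eq:R1_def,eq:R2_def}. Since $\partial_t\psi$ now carries only $\langle\nabla\rangle^{-\alpha}$ regularity, we cannot use the full $|\nabla|^{-2}$ gain. Instead we split $|\nabla|^{-2}P_{>N}=|\nabla|^{-2+\alpha}P_{>N}\cdot|\nabla|^{-\alpha}$ and pay $N^{-(2-\alpha)}$. Alternatively, we may use only a fractional substitution, writing $P_{>N}\psi=|\nabla|^{-2}P_{>N}|\nabla|^{2}\psi$ only where needed, and balance the exponents so that every piece is $O(N^{-(2^--3\alpha)})$.

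For $R_1$ we apply \cref{lem:dSc} with the pair $(q_1,r_1)$ and Hölder \cref{eq:V1}. It remains to bound $\||\nabla|^{-2}P_{>N}\partial_t\psi\|_{L^{q_1'}L^{r_0}}$. Using Bernstein, we replace $\langle\nabla\rangle^{-\alpha}\partial_t\psi$ in an admissible $L^{q}L^{r}$ norm with $r$ close to $r_0$. This yields $N^{-(2-\alpha)}$, which is better than needed. The $V_2\psi$ term is bounded as in \cref{eq:R1}.

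For $R_2$ we use the same splitting $R_{2,0}+R_{2,1}+R_{2,2}$. The $P_{<1}$ piece $R_{2,0}$ is handled via $L^1$ output and $V_1\in L^1\cap L^p$. Since $V_1\notin L^2$ in general, we use $\|V_1 f\|_{L^1}\le\|V_1\|_{L^p}\|f\|_{L^{p'}}$ together with Sobolev, which costs $|\nabla|^{-2}: L^{p}\to L^{p'}$ with loss $N^{-(2-(6/p-3))}=N^{-(2-2\alpha)}$. This is acceptable.

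For the high-frequency piece $R_{2,1}$ we mimic \cref{eq:R21_1}. The input $V_1P_{\gg M}\psi$ is placed in $L^{p'}$ via $P_{\gg M}\psi\in L^{p/(p-2)}$, replaced appropriately: we need $P_{\gg M}\psi$ in $L^{s}$ with $\frac1s=1-\frac2p$. This is negative when $p<2$, so here we must change the Hölder bookkeeping. We put $V_1(\cdot)$ into $L^{1}$ using $\||\nabla|^{-2}P_{>N}(V_1\phi)\|_{L^{p'}}$. By Hardy–Littlewood–Sobolev, $|\nabla|^{-2}$ maps $L^{\sigma}\to L^{p'}$ with $\frac1\sigma=\frac1{p'}+\frac23$. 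Any excess derivatives are converted into $N^{-\cdot}$, and we take $V_1 P_{\gg M}\psi\in L^\sigma$ with $P_{\gg M}\psi$ controlled by $(M')^{-(2-\alpha)+\cdots}\||\nabla|^{2-\alpha}P_{M'}\psi\|_{L^2}$. The double dyadic sum then converges as in \cref{eq:sumM} provided the exponents balance; we expect this balance to produce precisely the loss $3\alpha$ and the $\varepsilon$ loss in $2^-$.

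For the low-frequency piece $R_{2,2}$ we perform the same partial normal form transformation \cref{eq:R22_integrationbyparts}. The boundary terms $R_{2,2,1}$ and $R_{2,2,2}$ gain $M^{-2}$ from $|\nabla|^{-2}P_M$ and use $\psi\in L^\infty$. The integral term $R_{2,2,3}$ needs $P_{\lesssim M}\partial_t\psi$ in $L^\infty_\vx$, which costs $M^{\frac12+\alpha}$ by Bernstein from $\langle\nabla\rangle^{-\alpha}\partial_t\psi\in L^2L^6$. The $M$-sum then still converges, since we have $M^{-\frac32^+}$ available. The main obstacle is replacing the $\|V_1\|_{L^2}$ Hölder steps, which are no longer available, by $L^p$ ones. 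Each occurrence moves Lebesgue exponents, and the resulting Bernstein/Sobolev losses must be distributed between the $M$-factors, where they must remain summable, and the $N$-factors, where they must stay within $N^{-(2^--3\alpha)}$. I would first fix $r_0$ and track every index by the relation \cref{eq:idx_relation}, checking that each of the five pieces has $N$-exponent at least $2-3\alpha-\vep$.
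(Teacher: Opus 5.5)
Your overall architecture is the paper's: you substitute the equation to get $R=R_1-R_2$, split $R_2=R_{2,0}+R_{2,1}+R_{2,2}$, and apply the partial normal form to $R_{2,2}$. Your bounds for $R_1$, $R_{2,0}$, $R_{2,2,2}$ and $R_{2,2,3}$ agree with the paper's. The gap is in exactly the two terms that produce the $3\alpha$ loss.

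First, your index relation for $r_0$ is wrong. From \cref{eq:r0_def}, $3(1-\frac{1}{r_0})=\frac12+\frac3p+\vep=2+\alpha+\vep$, i.e.\ $r_0\xrightarrow{2+\alpha+\vep}1$, not $\frac32-\alpha^+$. Consequently $\|P_M|\nabla|^{-2}f\|_{L^{r_0}}\lesssim M^{\alpha+\vep}\|f\|_{L^1}$. The boundary term $R_{2,2,1}$ is not a Duhamel term, so it must be measured directly in $l^{q_0}_\tau L^{r_0}$. There, once $V_1|\nabla|^{-2}P_{>N}(V_1P_{\lesssim M}\psi)$ is placed in $L^1$, the factor $|\nabla|^{-2}P_M$ gives no decay. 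Since $P_{\lesssim M}\psi$ gives no decay in $M$ either, the $M$-sum diverges. Your argument is valid only for the $l^\infty_\tau L^2$ part of $R_{2,2,1}$ and for $R_{2,2,2}$, where $L^1\to L^2$ costs only $M^{\frac32}$.

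Second, in $R_{2,1}$ the Hardy--Littlewood--Sobolev step $|\nabla|^{-2}:L^\sigma\to L^{p'}$ with $\frac1\sigma=\frac1{p'}+\frac23$ is unavailable, because $\frac1\sigma=\frac53-\frac1p>1$. If you instead copy \cref{eq:R21_1} with $P_{\gg M}\psi\in L^\infty$, you get $M^{\frac12+\vep}\cdot M^{-(\frac12-\alpha)}$, which is not summable. So ``the exponents balance'' is unsupported exactly where the rate is decided.

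The missing idea is the transfer of a fractional derivative across $V_1$, as in \cref{lem:move_derivative,lem:move_derivative2}. Apply Bernstein at the output frequency $M$ from an intermediate $L^{q}$ with $q>1$ rather than from $L^1$, then use H\"older with $V_1\in L^p$ and Sobolev from $L^{p'}$. This moves $\gamma=\alpha+\vep+\delta$ derivatives onto the high-frequency factor $|\nabla|^{-2}P_{>N}(V_1\cdots)$. It turns the divergent $M^{\alpha+\vep}$ into a summable $M^{-\delta}$, at the price of $N^{\alpha+\vep+\delta}$. Together with the $N^{2\alpha}$ lost in $|\nabla|^{-2}:L^p\to L^{p'}$, this gives exactly $N^{-(2-3\alpha-\vep-\delta)}$.

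For $R_{2,1}$ alone, your $L^1$-output bookkeeping can be rescued by putting $P_{\gg M}\psi$ in $L^\rho$ with $\frac3\rho=\alpha+\vep+\delta$; this yields $M^{-\delta}$ and the same power of $N$. No such choice helps $R_{2,2,1}$, however: there the decay in $M$ must come from the output side, which is what \cref{lem:move_derivative2} provides.
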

	
	To prove \cref{prop:R2}, we will use the following index relations
	\begin{equation}\label{eq:embed}
		r_0 \xrightarrow{\alpha+\frac{1}{2}+\vep}2\xrightarrow{1-\vep}r_1'\xrightarrow{\frac{1}{2}+\vep}1, \qquad p' \xrightarrow{\alpha} 2 \xrightarrow{\alpha} p,  
	\end{equation}
	where we recall \cref{eq:idx_relation} for the definition. Moreover, we need the following two auxiliary results.  
	\begin{lemma}\label{lem:move_derivative}
		For any $M \geq 1$ and $0 \leq \gamma \leq \frac{1}{2}$, we have
		\begin{equation*}
			\| P_{M} (V_1 \phi) \|_{L^{r_1'}} \lesssim M^{\frac{1}{2}+ \vep - \gamma} \| V_1 \|_{L^p} \| |\nabla|^{\gamma} \phi \|_{L^{p'}}. 
		\end{equation*}
	\end{lemma}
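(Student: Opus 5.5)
The idea is to use one H\"older--Sobolev--Bernstein chain that works for every $0 \leq \gamma \leq \frac{1}{2}$: spend the $\gamma$ derivatives on $\phi$ through the homogeneous Sobolev embedding, use H\"older to absorb $V_1$, and pay the remaining integrability gap up to $L^{r_1'}$ with Bernstein's inequality \cref{eq:bernstein} at frequency $M$. Since $V_1$ sits outside any derivative, no commutator issue arises.

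\textbf{Step 1 (Sobolev on $\phi$).} Define $s \in [p', \infty)$ by
\[
\frac{1}{s} = \frac{1}{p'} - \frac{\gamma}{3}.
\]
Since $\frac{3}{2} < p \leq 2$, we have $2 \leq p' < 3$. Hence $\frac{1}{p'} - \frac{\gamma}{3} \geq \frac{1}{3} - \frac{1}{6} > 0$, so $s$ is finite. For $\gamma > 0$ the homogeneous Sobolev embedding $\dot W^{\gamma, p'}(\R^3) \hookrightarrow L^s(\R^3)$ then gives
\[
\| \phi \|_{L^s} \lesssim \| \, |\nabla|^\gamma \phi \|_{L^{p'}}.
\]
For $\gamma = 0$ this step is trivial, since $s = p'$.

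\textbf{Step 2 (H\"older).} Define $t$ by $\frac{1}{t} = \frac{1}{p} + \frac{1}{s}$, which gives
\[
\frac{1}{t} = 1 - \frac{\gamma}{3}, \qquad 1 \leq t \leq \frac{6}{5}.
\]
H\"older's inequality yields
\[
\| V_1 \phi \|_{L^t} \leq \| V_1 \|_{L^p} \| \phi \|_{L^s}.
\]

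\textbf{Step 3 (Bernstein).} Recall from \cref{eq:r1_def} that $r_1 = \frac{6}{1+2\vep}$, so $r_1' = \frac{6}{5-2\vep}$. For $\vep$ small we have $t \leq r_1'$, so \cref{eq:bernstein} applies and gives
\[
\| P_M (V_1 \phi) \|_{L^{r_1'}} \lesssim M^{3(\frac{1}{t} - \frac{1}{r_1'})} \| V_1 \phi \|_{L^t}.
\]
The exponent is
\[
3\left(1 - \frac{\gamma}{3} - \frac{5 - 2\vep}{6}\right) = \frac{1}{2} - \gamma + \vep,
\]
which is exactly the claimed power. This is consistent with the index relations in \cref{eq:embed}: the case $\gamma = 0$ is $r_1' \xrightarrow{\frac{1}{2} + \vep} 1$, and the case $\gamma = \frac{1}{2}$ lands in $L^{6/5}$ at cost $M^{\vep}$.

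Chaining Steps 1--3 proves the lemma. The only points needing care are the admissibility conditions: $s < \infty$ in the Sobolev step, which holds because $p > \frac{3}{2}$ forces $p' < 3$, and $t \leq r_1'$ in Bernstein, which holds for $\vep$ small. There is no genuine obstacle beyond this bookkeeping.
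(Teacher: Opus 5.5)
Your proposal is correct and is essentially the paper's own proof. Your $t$ and $s$ are exactly the paper's exponents $q_1$ and $q_2$, and you use the same three steps (Bernstein at frequency $M$ from $L^{t}$ to $L^{r_1'}$, H\"older with $\frac{1}{t}=\frac{1}{p}+\frac{1}{s}$, and the homogeneous Sobolev embedding $\dot W^{\gamma,p'}\hookrightarrow L^{s}$); the only difference is that the paper runs the chain starting from Bernstein, while you start from Sobolev.
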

	
	\begin{proof}
		Using \cref{lem:Pi}, we obtain
		\begin{equation}
			\| P_{M} (V_1 \phi) \|_{L^{r_1'}} \lesssim M^{\frac{1}{2}+\vep - \gamma} \| V_1 \phi \|_{L^{q_1}},  
		\end{equation}
		where $1 \leq q_1 \leq r_1'$ satisfies
		\begin{equation*}
			\frac{1}{2}+\vep - \gamma = 3(\frac{1}{q_1} - \frac{1}{r_1'}). 
		\end{equation*}
		Furthermore, using Holder's inequality and Sobolev embedding, we have
		\begin{equation}
			\| V_1 \phi \|_{L^{q_1}} \leq \| V_1 \|_{L^p} \| \phi \|_{L^{q_2}} \lesssim \| V_1 \|_{L^p} \| |\nabla|^{\gamma} \phi \|_{L^{p'}},  
		\end{equation}
		where $q_1 \leq q_2 <\infty$ satisfies $\frac{1}{q_1} = \frac{1}{p} + \frac{1}{q_2}$ and we verify that
		\begin{equation*}
			3(\frac{1}{p'} - \frac{1}{q_2}) = 3(1 - \frac{1}{q_1}) = \gamma, 
		\end{equation*}
		which concludes the proof. 
	\end{proof}
	
	\begin{remark}
		By the Sobolev embedding, recalling \cref{eq:embed}, we have
		\begin{equation*}
			\| P_{M} (V_1 \phi) \|_{L^{r_1'}} \lesssim \| |\nabla|^{\frac{1}{2} + \vep} P_{M} (V_1 \phi) \|_{L^1} = \| |\nabla|^{\frac{1}{2} + \vep - \gamma} P_{M} |\nabla|^\gamma (V_1 \phi) \|_{L^1}. 
		\end{equation*}
		Hence, though $|\nabla|^\gamma$ and $V_1$ do not commute, \cref{lem:move_derivative} may be viewed as an $L^p$-based mechanism for transferring the derivative $|\nabla|^\gamma$ from the product $V_1\phi$ onto $\phi$.
	\end{remark}
	
	In the same spirit of \cref{lem:move_derivative}, we have the following. 
	\begin{lemma}\label{lem:move_derivative2}
		For any $M \geq 1$ and $\delta>0$ sufficiently small, we have
		\begin{equation*}
			\| P_{M} |\nabla|^{-2} (V_1 \phi) \|_{L^{r_0}} \lesssim M^{-\delta} \| V_1 \|_{L^p} \| |\nabla|^{\alpha+\vep+\delta} \phi \|_{L^{p'}}. 
		\end{equation*}
	\end{lemma}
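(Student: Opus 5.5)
The plan is to argue exactly as in the proof of \cref{lem:move_derivative}: apply a Bernstein-type bound for $P_M|\nabla|^{-2}$ from a suitably chosen $L^{q}$ into $L^{r_0}$, then H\"older's inequality to peel off $V_1\in L^p$, and finally a Sobolev embedding from $L^{p'}$ to close the estimate. The exponent $q$ is tuned so that the net power of $M$ is exactly $-\delta$.

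\emph{Step 1 (Bernstein).} Since $P_M$ localizes to $|\vxi|\sim M$, the symbol $|\vxi|^{-2}\vphi(\vxi/M)$ is $M^{-2}$ times a rescaled Schwartz multiplier. The argument behind \cref{eq:bernstein} therefore gives
\begin{equation*}
\| P_M |\nabla|^{-2} g\|_{L^{r_0}} \lesssim M^{-2+3(\frac1q-\frac1{r_0})}\|g\|_{L^q}, \qquad 1\le q\le r_0.
\end{equation*}
We choose $q$ by the relation $r_0 \xrightarrow{2-\delta} q$, that is, $3(\frac1q-\frac1{r_0})=2-\delta$. The power of $M$ is then $M^{-\delta}$.

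To check that this $q$ is admissible, use the chain \cref{eq:embed}:
\begin{equation*}
3\Big(1-\frac1{r_0}\Big)=\Big(\alpha+\tfrac12+\vep\Big)+(1-\vep)+\Big(\tfrac12+\vep\Big)=2+\alpha+\vep,
\end{equation*}
so that $r_0\xrightarrow{2+\alpha+\vep}1$. Since $2-\delta<2+\alpha+\vep$, we get $q>1$. Since $2-\delta>0$, we also get $q<r_0$, provided $\delta$ is small.

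\emph{Step 2 (H\"older).} We write $\|V_1\phi\|_{L^q}\le \|V_1\|_{L^p}\|\phi\|_{L^{q_2}}$ with $\frac1q=\frac1p+\frac1{q_2}$. This requires $q\le p$, which holds because $3(\frac1q-\frac1{r_0})=2-\delta$ is close to $2$. Indeed, $3(\frac1p-\frac1{r_0})=3-\frac3{r_1}-\ldots$ can be computed from \cref{eq:r0_def}: it equals $3(1-\frac1{r_1})-\frac6{r_0}$, which is larger than $2-\delta$ whenever $p\le 2$.

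\emph{Step 3 (Sobolev).} It remains to verify the index relation $p'\xrightarrow{\gamma} q_2$ with $\gamma=\alpha+\vep+\delta$. We compute
\begin{equation*}
\frac1{p'}-\frac1{q_2}=1-\frac1p-\frac1q+\frac1p=1-\frac1q,
\end{equation*}
and
\begin{equation*}
3\Big(1-\frac1q\Big)=3\Big(1-\frac1{r_0}\Big)-(2-\delta)=\alpha+\vep+\delta.
\end{equation*}
Hence $\|\phi\|_{L^{q_2}}\lesssim \||\nabla|^{\alpha+\vep+\delta}\phi\|_{L^{p'}}$, which holds since $1<p'<q_2<\infty$. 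Here $q_2<\infty$ follows from $\alpha+\vep+\delta<\frac3{p'}$, which is true because $\alpha<\frac12$ and $p'\ge2$.

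\emph{Main obstacle.} The only delicate point is the bookkeeping of exponents. One must make sure that every intermediate Lebesgue index stays in the admissible range: $q\ge1$, $q\le p$, and $q_2<\infty$. This is exactly why $\delta$ must be taken small, and why one needs the strict inequalities $\alpha<\frac12$ and $p>\frac32$. Combining Steps 1--3 gives the claimed estimate.
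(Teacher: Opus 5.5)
Your argument is the paper's argument: Bernstein for $P_M|\nabla|^{-2}$ from $L^{q}$ to $L^{r_0}$ with $3(\frac1q-\frac1{r_0})=2-\delta$, then H\"older to peel off $V_1$, then Sobolev from $L^{p'}$ with gain $3(1-\frac1q)=3(1-\frac1{r_0})-(2-\delta)=\alpha+\vep+\delta$, using $r_0\xrightarrow{2+\alpha+\vep}1$. The exponent bookkeeping in Steps 1 and 3 is correct.

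The one flaw is your justification of $q\le p$ in Step 2, where the inequality is reversed. Using $\frac1{r_1}=\frac{1+2\vep}{6}$ and $\frac1{r_0}=\frac{1-\alpha-\vep}{3}$, one gets
\[
3\Big(\frac1p-\frac1{r_0}\Big)=3\Big(1-\frac1{r_1}\Big)-\frac6{r_0}=\frac12+2\alpha+\vep .
\]
This is \emph{smaller} than $2-\delta$, because $\alpha<\frac12$. That is exactly what gives $\frac1p<\frac1q$, i.e.\ $q<p$. If it were larger, as you wrote, you would get $q>p$ and the H\"older step would fail. The relevant condition is also not $p\le 2$ but $\alpha<\frac12$, i.e.\ $p>\frac32$.

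Similarly, in Step 3 the bound $p'\ge 2$ points the wrong way: it only gives $\frac3{p'}\le\frac32$. The correct reason for $q_2<\infty$ is $\frac3{p'}=\frac32-\alpha>1>\alpha+\vep+\delta$.

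In fact $q_2<\infty$ and $q<p$ are the same condition, since $\frac1{q_2}=\frac1q-\frac1p=\frac12-\frac{2\alpha+\vep+\delta}{3}$. Both therefore reduce to $2\alpha+\vep+\delta<\frac32$, and once this is stated correctly your proof is complete.
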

	
	\begin{proof}
		Using \cref{lem:Pi}, we have
		\begin{equation}
			\| P_{M} |\nabla|^{-2} (V_1 \phi) \|_{L^{r_0}} \lesssim M^{-\delta} \| V_1 \phi \|_{L^{q_1}}, 
		\end{equation}
		where $ 1 < q_1 \leq r_0$ satisfies
		\begin{equation*}
			3(\frac{1}{q_1} - \frac{1}{r_0}) = 2 - \delta. 
		\end{equation*}
		Furthermore, using H\"older's inequality and Sobolev embedding, we have
		\begin{equation}
			\| V_1 \phi \|_{L^{q_1}} \leq \| V_1 \|_{L^p} \| \phi \|_{L^{q_2}} \lesssim \| V_1 \|_{L^p} \| |\nabla|^{\alpha+\vep+\delta} \phi \|_{L^{p'}},  
		\end{equation}
		where $q_1 \leq q_2 <\infty$ satisfies $\frac{1}{q_1} = \frac{1}{p} + \frac{1}{q_2}$ and we verify that, recalling $r_0 \xrightarrow{\alpha+2+\vep} 1$ in \cref{eq:embed}, 
		\begin{equation*}
			3(\frac{1}{p'} - \frac{1}{q_2}) = 3(1 - \frac{1}{q_1}) = 3(1 - \frac{1}{r_0}) - (2 - \delta) = \alpha + \vep + \delta, 
		\end{equation*}
		which concludes the proof. 
	\end{proof}

	\begin{proof}[Proof of \cref{prop:R2}]
		We have the same decomposition of $R = R_1 - R_2$ as in \cref{eq:R_decomp_1}. For $R_1$ \cref{eq:R1_def}, similar to \cref{eq:R1}, we have, for $(q, r) \in \{(q_0, r_0), (\infty, 2)\}$, 
		\begin{align}\label{eq:R1_est_2}
			\| R_1 \|_{l^{q}_\tau{([0, T]; L^{r})}} 
			&\lesssim\frac{|T|^{1-\frac{1}{q_0} - \frac{1}{q_1}}}{N^{2-\alpha}} \| \langle\nabla\rangle^{-\alpha} \partial_t \psi \|_{L^{q_0}{([0, T]; L^{r_0})}} + \frac{|T|^{1- \frac{1}{q_1}}}{N^{2}} \| V_2 \psi \|_{L^{\infty}{([0, T]; L^{r_0})}} \notag \\
			&\lesssim \frac{1}{N^{2-\alpha}} + \frac{1}{N^2}\|V_2\|_{L^\infty} \| \psi \|_{L^\infty([0, T]; L^{r_0})} \lesssim \frac{1}{N^{2-\alpha}}.  
		\end{align}
		
		For $R_2$ \cref{eq:R2_def}, we recall the decomposition $R_2 = R_{2, 0} + R_{2, 1} + R_{2, 2}$ in \cref{eq:R2_decomp}. 
		For $R_{2, 0}$ \cref{eq:R20_def}, similar to \cref{eq:R20}, using \cref{lem:dSc} and the Sobolev embedding with $p' \xrightarrow{2\alpha} p$ in \cref{eq:embed}, we get, for $(q, r) \in \{(q_0, r_0), (\infty, 2)\}$, 
		\begin{align}\label{eq:R20_2}
			\| R_{2, 0} \|_{l^{q}_\tau{([0, T]; L^{r})}}
			&\lesssim \| V_1 \|_{L^p} \| \, |\nabla|^{-2} P_{>N} (V_1 \psi) \|_{L^{q_1'}{([0, T]; L^{p'})}} \notag \\
			&\lesssim \| V_1 \|_{L^p} \| \, |\nabla|^{-2+2\alpha} P_{>N} (V_1 \psi) \|_{L^{q_1'}{([0, T]; L^{p})}} \notag \\
			&\lesssim |T|^{1-\frac{1}{q_1}} \frac{\| V_1 \|_{L^p}^2}{N^{2-2\alpha}} \| \psi \|_{L^{\infty}{([0, T]; L^\infty)}} \lesssim \frac{1}{N^{2-2\alpha}}.
		\end{align}		
		For $R_{2, 1}$ \cref{eq:R21_def}, using \cref{lem:dSc}, \cref{lem:move_derivative} with $\gamma = \alpha + \vep + \delta$ for some $\delta>0$ sufficiently small and Sobolev embedding with $p' \xrightarrow{2\alpha} p$ in \cref{eq:embed}, we have, for $(q, r) \in \{(q_0, r_0), (\infty, 2)\}$, 
		\begin{align}\label{eq:R21_2}
			\| R_{2, 1} \|_{l^{q}_\tau{([0, T]; L^{r})}} 
			&\lesssim \| \sum_{M \geq 1} P_M ( V_1 |\nabla|^{-2} P_{>N} (V_1 P_{\gg M} \psi) ) \|_{L^{q_1'}{([0, T]; L^{r_1'})}} \notag \\
			&\lesssim \sum_{M \geq 1} \| P_M ( V_1 |\nabla|^{-2} P_{>N} (V_1 P_{\gg M} \psi) ) \|_{L^{q_1'}{([0, T]; L^{r_1'})}} \notag \\
			&\lesssim \sum_{M \geq 1} M^{\frac{1}{2}-\alpha-\delta} \| V_1 \|_{L^p} \| |\nabla|^{-2+\alpha+\vep+\delta} P_{>N} (V_1 P_{\gg M} \psi) \|_{L^{q_1'}{([0, T]; L^{p'})}} \notag \\
			&\lesssim \sum_{M \geq 1} M^{\frac{1}{2}-\alpha-\delta}  \| |\nabla|^{-2+3\alpha+\vep+\delta} P_{>N} (V_1 P_{\gg M} \psi) \|_{L^{q_1'}{([0, T]; L^{p})}} \notag \\
			&\lesssim \frac{1}{N^{2 - 3\alpha -\vep -\delta}} \sum_{M \geq 1} M^{\frac{1}{2}-\alpha-\delta}  \| V_1 P_{\gg M} \psi \|_{L^{q_1'}{([0, T]; L^{p})}} \notag \\
			&\lesssim \frac{|T|^{1 - \frac{1}{q_1}}\| V_1 \|_{L^p}}{N^{2 - 3\alpha -\vep -\delta}} \sum_{M \geq 1}  M^{\frac{1}{2}-\alpha-\delta}  \| P_{\gg M} \psi\|_{L^{\infty}{([0, T]; L^{\infty})}} \notag \\
			&\lesssim \frac{|T|^{1 - \frac{1}{q_1}}\| V_1 \|_{L^p}}{N^{2 - 3\alpha -\vep -\delta}} \sum_{M \geq 1}  M^{\frac{1}{2}-\alpha-\delta} \sum_{M' \gg M} {(M')}^{\frac{3}{2}-2+\alpha} \| \psi \|_{L^{\infty}{([0, T]; H^{2-\alpha})}} \notag \\
			&\lesssim \frac{1}{N^{2-3\alpha - \vep - \delta}}.  
		\end{align}
		To estimate $R_{2, 2}$ \cref{eq:R22_def}, we perform again a partial normal form transformation to obtain $i R_{2, 2} = R_{2, 2, 1} + R_{2, 2, 2} + R_{2, 2, 3}$ as in \cref{eq:R22_integrationbyparts}. For $R_{2, 2, 1} $ in \cref{eq:R22_integrationbyparts}, using \cref{lem:dSc,lem:move_derivative2} and the Sobolev embedding with $p' \xrightarrow{2\alpha} p$ in \cref{eq:embed}, for some $\delta > 0$ sufficiently small, we have, 
		\begin{align}\label{eq:R221_2_1}
			\| R_{2,2,1} \|_{l^{q_0}_\tau{([0, T]; L^{r_0})}}
			&\lesssim  \sum_{M \geq 1} \| P_M |\nabla|^{-2}  ( V_1 |\nabla|^{-2} P_{>N} (V_1 P_{\lesssim M} \psi)) \|_{l^{q_0}_\tau{([0, T]; L^{r_0})}} \notag \\
			&\lesssim \| V_1 \|_{L^p} \sum_{M \geq 1} M^{-\delta}  \|  |\nabla|^{-2+\alpha+\vep+\delta} P_{>N} (V_1 P_{\lesssim M} \psi) \|_{l^{q_0}_\tau{([0, T]; L^{p'})}} \notag \\
			&\lesssim \sum_{M \geq 1} M^{-\delta}  \|  |\nabla|^{-2+3\alpha+\vep+\delta} P_{>N} (V_1 P_{\lesssim M} \psi) \|_{l^{q_0}_\tau{([0, T]; L^{p})}} \notag \\  
			&\lesssim \frac{1}{N^{2-3\alpha-\vep-\delta}} \sum_{M \geq 1} M^{-\delta}  \|  V_1 P_{\lesssim M} \psi \|_{l^{q_0}_\tau{([0, T]; L^{p})}} \notag \\
			&\lesssim \frac{\| V_1 \|_{L^p} |T|^{q_0}}{N^{2-3\alpha-\vep-\delta}} \sum_{M \geq 1} M^{-\delta}   \| \psi \|_{l^{\infty}_\tau{([0, T]; L^{\infty})}} \lesssim \frac{1}{N^{2-3\alpha-\vep-\delta}},  
		\end{align}
		and, by \cref{lem:Pi} and the same Sobolev embedding, 
		\begin{align}\label{eq:R221_2_2}
			\| R_{2,2,1} \|_{l^{\infty}_\tau{([0, T]; L^{2})}}
			&\lesssim  \sum_{M \geq 1} \| P_M |\nabla|^{-2}  ( V_1 |\nabla|^{-2} P_{>N} (V_1 P_{\lesssim M} \psi)) \|_{l^{\infty}_\tau{([0, T]; L^{2})}} \notag \\
			&\lesssim  \sum_{M \geq 1} M^{-\frac{1}{2}} \| V_1 |\nabla|^{-2} P_{>N} (V_1 P_{\lesssim M} \psi) \|_{l^{\infty}_\tau{([0, T]; L^{1})}} \notag \\
			&\lesssim  \| V_1 \|_{L^p} \sum_{M \geq 1} M^{-\frac{1}{2}} \| |\nabla|^{-2} P_{>N} (V_1 P_{\lesssim M} \psi) \|_{l^{\infty}_\tau{([0, T]; L^{p'})}} \notag \\
			&\lesssim \sum_{M \geq 1} M^{-\frac{1}{2}} \| |\nabla|^{-2+2\alpha} P_{>N} (V_1 P_{\lesssim M} \psi) \|_{l^{\infty}_\tau{([0, T]; L^{p})}} \notag \\
			&\lesssim\frac{1}{N^{2-2\alpha}} \sum_{M \geq 1} M^{-\frac{1}{2}}  \| V_1 P_{\lesssim M} \psi \|_{l^{q_0}_\tau{([0, T]; L^{p})}} \notag \\  
			&\lesssim\frac{\| V_1 \|_{L^p}|T|^{q_0}}{N^{2-2\alpha}} \sum_{M \geq 1} M^{-\frac{1}{2}} \| \psi \|_{l^{\infty}_\tau{([0, T]; L^{\infty})}} \lesssim \frac{1}{N^{2-2\alpha}}. 
		\end{align}
		For $R_{2, 2, 2}$ in \cref{eq:R22_integrationbyparts}, similar to \cref{eq:R222}, we have, using the Sobolev embedding with $p' \xrightarrow{2\alpha} p$ in \cref{eq:embed}, for $(q, r) \in \{(q_0, r_0), (\infty, 2)\}$, 
		\begin{align}\label{eq:R222_2}
			\| R_{2,2,2} \|_{l^{q}_\tau{([0, T]; L^{r})}}
			&\lesssim \sum_{M \geq 1} M^{-\frac{1}{2}} \| V_1 |\nabla|^{-2} P_{>N} (V_1 P_{\lesssim M} \psi_0) \|_{L^1} \notag \\
			&\lesssim \| V_1 \|_{L^p} \sum_{M \geq 1} M^{-\frac{1}{2}} \|  |\nabla|^{-2} P_{>N} (V_1 P_{\lesssim M} \psi_0) \|_{L^{p'}} \notag \\
			&\lesssim \sum_{M \geq 1} M^{-\frac{1}{2}} \|  |\nabla|^{-2+2\alpha} P_{>N} (V_1 P_{\lesssim M} \psi_0) \|_{L^{p}} \notag \\
			&\lesssim \frac{1}{N^{2-2\alpha}} \sum_{M \geq 1} M^{-\frac{1}{2}} \| V_1 P_{\lesssim M} \psi_0 \|_{L^{p}} \notag \\
			&\lesssim \frac{\| V_1 \|_{L^p}}{N^{2-2\alpha}} \sum_{M \geq 1} M^{-\frac{1}{2}} \| \psi_0 \|_{L^{\infty}} \lesssim \frac{1}{N^{2-2\alpha}}. 
		\end{align}
		Finally, for $R_{2,2,3}$ in \cref{eq:R22_integrationbyparts}, using \cref{lem:dSc}, \cref{lem:Pi} and the Sobolev embedding, recalling $r_1' \xrightarrow{\frac{1}{2}+\vep} 1$ and $p' \xrightarrow{2\alpha} p$ in \cref{eq:embed}, we have, for $(q, r) \in \{(q_0, r_0), (\infty, 2)\}$, 
		\begin{align}\label{eq:R223_2}
			\| R_{2,2,3} \|_{l^{q}_\tau{([0, T]; L^{r})}} 
			&\lesssim  \sum_{M \geq 1} \| P_M |\nabla|^{-2} ( V_1 |\nabla|^{-2} P_{>N} (V_1 P_{\lesssim M} \partial_t \psi) ) \|_{L^{q_1'}{([0, T]; L^{r_1'})}} \notag \\
			&\lesssim  \sum_{M \geq 1} M^{-\frac{3}{2} + \vep} \| V_1 |\nabla|^{-2} P_{>N} (V_1 P_{\lesssim M} \partial_t \psi) \|_{L^{q_1'}{([0, T]; L^{1})}} \notag \\
			&\lesssim \| V_1 \|_{L^p} \sum_{M \geq 1} M^{-\frac{3}{2}+\vep} \| \, |\nabla|^{-2} P_{>N} (V_1 P_{\lesssim M} \partial_t \psi) \|_{L^{q_1'}{([0, T]; L^{p'})}} \notag \\
			&\lesssim \| V_1 \|_{L^p} \sum_{M \geq 1} M^{-\frac{3}{2}+\vep} \| \, |\nabla|^{-2+2\alpha} P_{>N} (V_1 P_{\lesssim M} \partial_t \psi) \|_{L^{q_1'}{([0, T]; L^{p})}}  \notag \\
			&\lesssim\frac{ \| V_1 \|_{L^p}}{N^{2-2\alpha}} \sum_{M \geq 1} M^{-\frac{3}{2}+\vep} \| V_1 P_{\lesssim M} \partial_t \psi \|_{L^{q_1'}{([0, T]; L^{p})}} \notag \\
			&\lesssim\frac{ \| V_1 \|^2_{L^p}}{N^{2-2\alpha}} \sum_{M \geq 1} M^{-\frac{3}{2}+\vep} \| P_{\lesssim M} \partial_t \psi \|_{L^{q_1'}{([0, T]; L^{\infty})}} \notag \\
			&\lesssim\frac{ \| V_1 \|^2_{L^p}}{N^{2-2\alpha}} |T|^{1 - \frac{1}{q_1} - \frac{1}{2}} \sum_{M \geq 1} M^{-\frac{3}{2}+\vep} M^{\frac{1}{2}+\alpha} \| \langle \nabla \rangle^{-\alpha} \partial_t \psi \|_{L^{2}{([0, T]; L^{6})}} \lesssim \frac{1}{N^{2-2\alpha}}. 
		\end{align}
		Combing \cref{eq:R221_2_1,eq:R221_2_2,eq:R222_2,eq:R223_2}, we obtain the estimate of $R_{2,2}$, which combined with \cref{eq:R20_2,eq:R21_2} yield the error bound for $R_2$ of order $O(N^{-(2^--3\alpha)})$. The conclusion then follows from the estimates of $R_2$ and $R_1$ in \cref{eq:R1_est_2} immediately.  
	\end{proof}
	
	With \cref{prop:R2}, \cref{prop:E12_3} can be obtained by taking $t = t_n$ and $N = N_\tau$ in \cref{eq:R_def}. Then the proof of \cref{prop:E1_pl2} is completed. 
	
	\section{Error estimates of the EWI for \texorpdfstring{\cref{NLSE} with $\beta = 0$}{NLSE with beta=0}}\label{sec:4}
	In this section, we establish the estimates of $\mathcal{E}_0$, $\mathcal{E}_2$, and $Z$ in the error equation \cref{eq:error_eq}, and complete the error estimate for the EWI \cref{eq:EWI_scheme}. 

	\subsection{Temporal error and stability}
	For $\mathcal{E}_0$, a direct application of \cref{lem:dS1} yields that for any admissible pair $(q, r)$ and interval $I \subset \R$, 
	\begin{equation}\label{eq:E0_est}
		\| \mathcal{E}_0 \|_{l^q_\tau(I; L^r)} \leq \| \mathcal{E}_0 \|_{l^q_\tau(\tau \Z; L^r)} \leq \widetilde C_0 \| \Pi_\tau \psi_0 - \psi_\tau^0 \|_{L^2}, 
	\end{equation}
	where $\widetilde C_0$ depends exclusively on $d$ and $q$. Although $\Pi_\tau \psi_0 - \psi_\tau^0 = 0$ in the above, we keep this term since it will be used in extending the local error estimates on some short interval to the global error estimate on $[0, T]$. In that case, $\Pi_\tau \psi_0 - \psi_\tau^0$ will be replaced by $\Pi_\tau \psi_k - \psi_\tau^k$ for some $k$.
	
	Then we consider $\mathcal{E}_2$ \cref{eq:E2}, and we have the following optimal estimates.
	\begin{proposition}\label{prop:E2}
		For any admissible pairs $(q, r)$ with $q \neq 2$, we have
		\begin{equation*}
			\| \mathcal{E}_2 \|_{l_\tau^q([0, T]; L^r)} \lesssim \tau^{1-\frac{\alpha}{2}}. 
		\end{equation*}
	\end{proposition}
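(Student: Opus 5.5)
We will split $\mathcal{E}_2$ into the error from freezing the solution on each subinterval and the error from the exponential-integrator quadrature of the free flow. Write $\int_0^{n\tau} = \sum_{k=0}^{n-1}\int_{t_k}^{t_{k+1}}$ and set $F(s) := \Pi_\tau(V\Pi_\tau\psi(s))$. Since
\[
\tau \vphi_1(i\tau\Delta)e^{i(n-1-k)\tau\Delta} = \int_{t_k}^{t_{k+1}} e^{i(t_n-s)\Delta}\,\rmd s,
\]
the discrete sum in \cref{eq:E2} equals $\int_0^{n\tau} \Stau(n\tau - s) F(t_{k(s)})\,\rmd s$, where $t_{k(s)}$ is the left grid point of the subinterval containing $s$. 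Hence
\[
\mathcal{E}_2^n = -i\int_0^{n\tau} \Stau(n\tau-s)\bigl(F(s)-F(t_{k(s)})\bigr)\,\rmd s .
\]
The key step is then to write $F(s)-F(t_k) = \int_{t_k}^s \Pi_\tau(V\Pi_\tau \partial_\sigma\psi(\sigma))\,\rmd\sigma$. This reduces the estimate to space-time bounds on $\partial_t\psi$, which \cref{eq:regularity} supplies.

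\textbf{Step 1: the $V_2$ part.} Apply \cref{lem:dSc} with $(\tilde q,\tilde r) = (\infty,2)$ to get
\[
\|\mathcal{E}_{2}\|_{l^q_\tau L^r} \lesssim \Bigl\|\int_{t_k}^s \Pi_\tau\bigl(V_2\Pi_\tau\partial_\sigma\psi\bigr)\,\rmd\sigma\Bigr\|_{L^1_sL^2}.
\]
We cannot put $\partial_t\psi$ in $L^2$ directly, since only $\langle\nabla\rangle^{-\alpha}\partial_t\psi$ is controlled. Using $\|\Pi_\tau \langle\nabla\rangle^{\alpha}\|_{L^2\to L^2}\lesssim N_\tau^{\alpha}=\tau^{-\alpha/2}$ from \cref{lem:Pi} on the inner $\Pi_\tau$, we get
\[
\|\Pi_\tau\partial_t\psi\|_{L^\infty L^2} \lesssim \tau^{-\alpha/2}\|\langle\nabla\rangle^{-\alpha}\partial_t\psi\|_{L^\infty L^2}.
\]
The $\sigma$-integral over an interval of length at most $\tau$ gives a factor $\tau$, for a total of $\tau^{1-\alpha/2}$.

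\textbf{Step 2: the $V_1$ part.} We aim for $\tilde r' = r_1'$, the end-point dual, and use \cref{eq:V1}:
\[
\|V_1\Pi_\tau\partial_\sigma\psi\|_{L^{r_1'}} \le \|V_1\|_{L^p}\|\Pi_\tau\partial_\sigma\psi\|_{L^{r_0}}.
\]
Again by \cref{lem:Pi}, $\|\Pi_\tau\partial_t\psi\|_{L^{r_0}}\lesssim \tau^{-\alpha/2}\|\langle\nabla\rangle^{-\alpha}\partial_t\psi\|_{L^{r_0}}$, which is controlled in $L^{q_0}_t$ by \cref{eq:regularity} because $(q_0,r_0)$ is admissible. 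For the time integral, Minkowski and Hölder give
\[
\Bigl\|\int_{t_k}^s g\Bigr\|_{L^{q_1'}_s} \lesssim \tau\,\|g\|_{L^{q_1'}_t} \lesssim \tau\, T^{1/q_1'-1/q_0}\|g\|_{L^{q_0}_t},
\]
and this needs $q_1'\le q_0$. That holds since $q_1'<2\le q_0$. So $V_1$ also contributes $\tau^{1-\alpha/2}$. Note that \cref{lem:dSc} needs $(q,\tilde q)\neq(2,2)$; this is why $q\ne2$ is required, since $\tilde q = q_1$ is close to $2$ and in 3D equals $\frac{2}{1-\vep}>2$.

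\textbf{Main obstacle.} The delicate point is Step 2 in 3D near the threshold. There $r_0$ approaches $6$, and we must make sure that $(q_0,r_0)$ stays admissible, which holds since $r_0<6$ for $\vep$ small. We also need the Fourier-multiplier bound $\Pi_\tau\langle\nabla\rangle^{\alpha}: L^{r_0}\to L^{r_0}$ with norm $\lesssim N_\tau^\alpha$, which is \cref{eq:p3}. A second subtlety is that $\Pi_\tau$ commutes with $\partial_t$ but not with $V$. We only apply it inside, to $\partial_t\psi$, and the outer $\Pi_\tau$ is handled by \cref{eq:p1}. So no commutation is needed.
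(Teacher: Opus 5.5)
Your proof is correct and follows essentially the same route as the paper. Both arguments rewrite the discrete sum as $\int_{t_k}^{t_{k+1}}$ with the frozen value $\psi(t_k)$, write $\Pi_\tau(\psi(s)-\psi(t_k))$ as a time integral of $\langle\nabla\rangle^{\alpha}\Pi_\tau\langle\nabla\rangle^{-\alpha}\partial_t\psi$ at a cost of $\tau^{-\alpha/2}$, and apply \cref{lem:dSc} with $(\tilde q,\tilde r)=(q_1,r_1)$ for $V_1$ and $(\infty,2)$ for $V_2$. One harmless quibble: since $\tilde q\in\{q_1,\infty\}$ with $q_1>2$, the condition $(q,\tilde q)\neq(2,2)$ holds automatically, so your remark does not actually explain why $q\neq 2$ is assumed, though this does not affect the argument.
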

	
	\begin{proof}
		First, defining, for $0 \leq k \leq n-1$,  
		\begin{equation}
			w_j(s) = \Pi_\tau [V_j \Pi_\tau (\psi(s) - \psi(k \tau))], \quad  k\tau< s \leq (k+1)\tau, \quad j=1, 2, 
		\end{equation}
		and setting $w_j(s) = 0 $ when $s > T$, we have \cite{bao2025_singular}
		\begin{equation}
			\mathcal{E}_{2}^n = \mathcal{E}_{2, 1}^n + \mathcal{E}_{2, 2}^n := -i \sum_{k=0}^{n-1} \int_{k\tau}^{(k+1)\tau} \Stau(n\tau - s) (w_1(s) + w_2(s)) \rmd s, \quad 1 \leq n \leq T/\tau. 
		\end{equation}
		For any admissible pairs $(q, r)$ and $(\tilde {q}, \tilde{r})$ with $(q, \tilde{q}) \neq (2, 2)$, by \cref{lem:dSc}, we have
		\begin{equation}
			\| \mathcal{E}_{2, j} \|_{l^q_\tau([0, T]; L^r)} \leq \| \mathcal{E}_{2, j} \|_{l_\tau^q(\tau \mathbb{Z}; L^r)} \lesssim \| w_j \|_{L^{\tilde q'}([0, T]; L^{\tilde r'})}, \quad j = 1, 2. 
		\end{equation}
		For $j=1$, choosing $(\tilde {q}, \tilde{r}) = (q_1, r_1)$, we have, with $K := \lfloor T/\tau \rfloor$, 
		\begin{align}\label{eq:w1_est}
			\| \mathcal{E}_{2, 1} \|_{l^q_\tau([0, T]; L^r)} 
			&\lesssim \| w_1 \|_{L^{q_1'}([0, T]; L^{r_1'})} = \left( \int_0^{T} \| w_1(s) \|_{L^{r_1'}}^{q_1'} \rmd s \right)^{\frac{1}{q_1'}} \notag \\
			&= \left( \sum_{k=0}^{K-1} \int_{k\tau}^{(k+1)\tau} \| \Pi_\tau [V_1 \Pi_\tau (\psi(s) - \psi(k \tau))] \|_{L^{r_1'}}^{q_1'} \rmd s \right)^{\frac{1}{q_1'}} \notag \\
			&\lesssim \| V_1 \|_{L^{p}} \left( \sum_{k=0}^{K-1} \int_{k\tau}^{(k+1)\tau} \| \Pi_\tau(\psi(s) - \psi(k \tau)) \|_{L^{ r_0}}^{q_1'} \rmd s \right)^{\frac{1}{q_1'}} \notag \\
			&= \| V_1 \|_{L^{p}} \left( \sum_{k=0}^{K-1} \int_{k\tau}^{(k+1)\tau} \left \| \int_{k\tau}^s \langle \nabla \rangle^{\alpha} \Pi_\tau \langle \nabla \rangle^{-\alpha}\partial_t \psi(t) \rmd t \right \|_{L^{ r_0}}^{q_1'} \rmd s \right)^{\frac{1}{q_1'}} \notag \\
			&\lesssim \tau^{-\frac{\alpha}{2}} \| V_1 \|_{L^{p}} \left( \sum_{k=0}^{K-1} \int_{k\tau}^{(k+1)\tau} \left ( \int_{k\tau}^s \| \langle \nabla \rangle^{-\alpha} \partial_t \psi(t) \|_{L^{r_0}} \rmd t \right )^{q_1'} \rmd s \right)^{\frac{1}{q_1'}} \notag \\
			&\leq \tau^{1-\frac{\alpha}{2}} T^{1 - \frac{1}{q_0} - \frac{1}{q_1}} \| V_1 \|_{L^{p}} \| \, \langle \nabla \rangle^{-\alpha} \partial_t \psi \|_{L^{q_0}([0, T]; L^{r_0})} \lesssim \tau^{1-\frac{\alpha}{2}}. 
		\end{align}
		Similarly, for $j=2$, choosing $(\tilde {q}, \tilde{r}) = (\infty, 2)$, we have
		\begin{align}\label{eq:w2_est}
			\| \mathcal{E}_{2, 2} \|_{l^q_\tau([0, T]; L^r)} 
			&\lesssim \| w_2 \|_{L^{1}([0, T]; L^{2})} = \int_0^{T} \| w_2(s) \|_{L^{2}} \rmd s \notag \\
			&= \sum_{k=0}^{K-1} \int_{k\tau}^{(k+1)\tau} \| \Pi_\tau [V_2 \Pi_\tau (\psi(s) - \psi(k \tau))] \|_{L^{2}} \rmd s \notag \\
			&\lesssim \| V_2 \|_{L^{\infty}}  \sum_{k=0}^{K-1} \int_{k\tau}^{(k+1)\tau} \| \Pi_\tau(\psi(s) - \psi(k \tau)) \|_{L^{2}} \rmd s \notag \\
			&= \| V_2 \|_{L^{\infty}} \int_{k\tau}^{(k+1)\tau} \left \| \int_{k\tau}^s \langle \nabla \rangle^{\alpha} \Pi_\tau \langle \nabla \rangle^{-\alpha}\partial_t \psi(t) \rmd t \right \|_{L^{2}} \rmd s  \notag \\
			&\lesssim \tau^{-\frac{\alpha}{2}} \| V_2 \|_{L^{\infty}}  \sum_{k=0}^{K-1} \int_{k\tau}^{(k+1)\tau} \int_{k\tau}^s \| \langle \nabla \rangle^{-\alpha} \partial_t \psi(t) \|_{L^{2}} \rmd t \rmd s \notag \\
			&\leq \tau^{1-\frac{\alpha}{2}} T \| V_2 \|_{L^{\infty}} \| \langle \nabla \rangle^{-\alpha} \partial_t \psi \|_{L^{\infty}([0, T]; L^{2})} \lesssim \tau^{1-\frac{\alpha}{2}}. 
		\end{align}
		Combining \cref{eq:w1_est,eq:w2_est}, we complete the proof. 
	\end{proof}
	
	\begin{remark}
		In \cref{prop:E2}, the error bound for the temporal discretization is of order $O(\tau^{1 - \frac{\alpha}{2}})$, which is optimal for the EWI with respect to the sharp $H^{2-\alpha}$-regularity of the exact solution. 
	\end{remark}
	
	Then we estimate $Z$ \cref{eq:Z_def}. By \cite[Lemma 11.1]{LRI_error_1}, we have the following. 
	\begin{lemma}\label{lem:phi_1_Pi}
		Let $0 < \tau \leq 1$. For any $ 1 \leq p \leq \infty$ and $f \in L^p(\R^d)$, we have 
		\begin{equation*}
			\| \vphi_1(i \tau \Delta) \Pi_\tau f \|_{L^p} \lesssim \| f \|_{L^p}. 
		\end{equation*}
	\end{lemma}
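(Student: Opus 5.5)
We prove \cref{lem:phi_1_Pi} by showing that the Fourier multiplier $m_\tau(\vxi) := \vphi_1(-i\tau|\vxi|^2)\chi(\tau^{\frac12}\vxi)$ is the Fourier transform of a function whose $L^1$-norm is bounded uniformly in $\tau$. The claim then follows from Young's inequality, since $\vphi_1(i\tau\Delta)\Pi_\tau f = K_\tau * f$ with $\widehat{K_\tau} = m_\tau$, so $\|K_\tau * f\|_{L^p} \le \|K_\tau\|_{L^1}\|f\|_{L^p}$ for every $1\le p\le\infty$.

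The first step is scaling. Write $m_\tau(\vxi) = m(\tau^{\frac12}\vxi)$, where
\begin{equation*}
m(\boldsymbol{\eta}) := \vphi_1(-i|\boldsymbol{\eta}|^2)\,\chi(\boldsymbol{\eta}).
\end{equation*}
Then $K_\tau(\vx) = \tau^{-\frac d2} K(\tau^{-\frac12}\vx)$ with $\widehat K = m$, and hence $\|K_\tau\|_{L^1} = \|K\|_{L^1}$, independent of $\tau$. So it suffices to show $K\in L^1(\R^d)$, where $K$ is a single fixed function.

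The second step is smoothness. Since $\vphi_1(z) = \sum_{k\ge0} z^k/(k+1)!$ is entire, the map $\boldsymbol{\eta}\mapsto\vphi_1(-i|\boldsymbol{\eta}|^2)$ is a smooth function of $\boldsymbol{\eta}$; there is no singularity at $\boldsymbol{\eta}=0$ because $\vphi_1$ has a removable singularity at $0$. Multiplying by $\chi\in C_c^\infty$, which is supported in $B(0,2)$, shows that $m\in C_c^\infty(\R^d)\subset\mathcal S(\R^d)$. Therefore $K = \mathcal F^{-1}m\in\mathcal S(\R^d)\subset L^1(\R^d)$.

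There is no real obstacle; the only point needing care is the observation that the filter $\chi(\tau^{\frac12}\cdot)$ makes the symbol compactly supported at the scale $\tau^{-\frac12}$. Without it, $\vphi_1(-i\tau|\vxi|^2)$ decays only like $(\tau|\vxi|^2)^{-1}$ and oscillates, so it would not obviously give an $L^1$ kernel, particularly at the endpoints $p=1,\infty$. The assumption $\tau\le1$ is not even needed in this argument. Alternatively, one may simply cite \cite[Lemma 11.1]{LRI_error_1}.
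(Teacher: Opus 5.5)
Your proof is correct. The paper gives no argument for this lemma and simply imports it from \cite[Lemma 11.1]{LRI_error_1}, so your argument serves as a complete, self-contained substitute for that citation.

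The steps all check out:
\begin{itemize}
\item You write the symbol as $m(\tau^{\frac12}\vxi)$ with $m(\bm{\eta}) = \vphi_1(-i|\bm{\eta}|^2)\chi(\bm{\eta})$. Since $\vphi_1$ is entire, $m$ is smooth, and it is compactly supported because $\chi$ is, so $m \in C_c^\infty(\R^d)$.
\item The kernel is then $K_\tau = \tau^{-\frac d2}K(\tau^{-\frac12}\cdot)$ with $K \in \mathcal{S}(\R^d)$. Hence $\|K_\tau\|_{L^1} = \|K\|_{L^1} < \infty$ uniformly in $\tau$.
\item Young's inequality then gives the bound for every $1\le p\le\infty$, endpoints included.
\end{itemize}

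Compared with citing, your argument makes explicit that the only mechanism is the compact frequency support at scale $\tau^{-\frac12}$ supplied by $\Pi_\tau$, combined with dilation invariance of the $L^1$-norm. It also confirms your remark that the hypothesis $0<\tau\le 1$ is not needed for this statement.
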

	
	\begin{proposition}\label{prop:Z}
		Let $I=[0, n\tau]$ for some $0 \leq n \leq T/\tau$. For any admissible pair $(q, r)$ with $q \neq 2$, we have
		\begin{align*}
			\| Z \|_{l_\tau^q(I; L^r)} \leq C_0 (|I|^{1 - \frac{1}{q_1} - \frac{1}{q_0}} + |I|) \| \Pi_\tau \psi - \psi_\tau \|_{X_\tau(I)}, 
		\end{align*}
		where $\| \cdot \|_{X_{\tau}(I)} := \| \cdot \|_{l_\tau^{q_0}(I; L^{r_0})} + \| \cdot \|_{l_\tau^\infty(I; L^2)}$, and $C_0$ is a constant depending exclusively on $\| V_1 \|_{L^p}$ and $\| V_2 \|_{L^\infty}$. 
	\end{proposition}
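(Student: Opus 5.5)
We split $Z$ according to the decomposition $V = V_1 + V_2$ in \cref{eq:V_decomp}, writing $Z = Z_1 + Z_2$ with
\begin{equation*}
Z_j^n := - i \tau \sum_{k=0}^{n-1} \Stau((n-1)\tau - k \tau) \vphi_1(i \tau \Delta) \Pi_\tau ( V_j e^k ), \qquad j = 1, 2,
\end{equation*}
where $e^k = \Pi_\tau \psi(t_k) - \psi_\tau^k$. Since $\Stau((n-1-k)\tau) = \Stau((n-k+\theta)\tau)$ with $\theta=-1$, each $Z_j$ has exactly the form of the discrete inhomogeneous term in \cref{lem:dS2}. This applies once we note that $\vphi_1(i\tau\Delta)\Pi_\tau$ commutes with $\Stau$. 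We set $f^k := \vphi_1(i\tau\Delta)\Pi_\tau(V_j e^k)$ for $0\le k\le n-1$ and $f^k=0$ otherwise, so that the sums are restricted to $I$. For $\tau\le 1$, \cref{lem:phi_1_Pi} removes the operator $\vphi_1(i\tau\Delta)\Pi_\tau$ in any $L^{\tilde r'}$-norm at the cost of a constant.

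For $Z_1$ we apply \cref{lem:dS2} with the dual endpoint pair $(\tilde q, \tilde r) = (q_1, r_1)$ from \cref{eq:r1_def}. The condition $(q,\tilde q)\neq(2,2)$ holds because $q \neq 2$. This gives
\begin{equation*}
\| Z_1 \|_{l^q_\tau(I;L^r)} \lesssim \| V_1 e \|_{l^{q_1'}_\tau(I; L^{r_1'})} \leq \| V_1 \|_{L^p} \| e \|_{l^{q_1'}_\tau(I; L^{r_0})},
\end{equation*}
where the last step is H\"older in space via \cref{eq:V1}. Then we apply H\"older in discrete time, going from $l^{q_1'}_\tau$ to $l^{q_0}_\tau$ on $I$. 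This requires $q_1' \le q_0$, that is, $1 - \frac{1}{q_1} - \frac{1}{q_0} \ge 0$. That holds since $q_1>2$ essentially at the endpoint and $q_0\ge 2$; we would check it carefully for the specific pairs in each dimension. The discrete H\"older inequality $\|g\|_{l^{a}_\tau(I)} \le |I|^{\frac1a-\frac1b}\|g\|_{l^b_\tau(I)}$ then produces the factor $|I|^{1-\frac{1}{q_1}-\frac{1}{q_0}}$ multiplying $\|e\|_{l^{q_0}_\tau(I;L^{r_0})}$.

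For $Z_2$ we use \cref{lem:dS2} with $(\tilde q,\tilde r)=(\infty,2)$, so the right-hand side is $\|V_2 e\|_{l^1_\tau(I;L^2)} \le \|V_2\|_{L^\infty}\, |I|\, \|e\|_{l^\infty_\tau(I;L^2)}$. Summing the two bounds gives the claim, with $C_0$ depending only on $\|V_1\|_{L^p}$, $\|V_2\|_{L^\infty}$ and the Strichartz constants (hence on $d$ and $q$, which are fixed).

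The only delicate points are bookkeeping. The first is verifying that $q_0 \ge q_1'$ for the chosen $\vep$, so that the power of $|I|$ is nonnegative; this is needed later to make $C_0|I|^{\cdots}$ small on short intervals. The second is correctly handling the truncation to $I$ when extending \cref{lem:dS2} from $\tau\Z$ to finite intervals, as in \cite[Remark 3.4]{bao2025_singular}. Neither step requires any new estimate.
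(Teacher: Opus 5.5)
Your proposal is correct and is essentially the paper's own argument. You split $Z=Z_1+Z_2$ and, for $Z_1$, apply the discrete inhomogeneous Strichartz estimate with the endpoint dual pair $(q_1,r_1)$, then H\"older in space (via $V_1\in L^p$) and H\"older in discrete time to produce the factor $|I|^{1-\frac{1}{q_1}-\frac{1}{q_0}}$. For $Z_2$, you use the pair $(\infty,2)$, which gives the factor $|I|$. Your choice $(\tilde q,\tilde r)=(q_1,r_1)$ is the one that matches the $l^{q_1'}_\tau L^{r_1'}$ norm the paper actually estimates; the paper's text names $(q_0,r_0)$ at that step, which appears to be a slip.
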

	
	\begin{proof}
		We have
		\begin{equation}
			Z^n = Z^n_1 + Z^n_2, \qquad Z^n_j =  - i \tau \sum_{k=0}^{n-1} \Stau((n-1)\tau - k \tau) \vphi_1(i \tau \Delta) \Pi_\tau (V_j e^k), \quad j=1,2.  
		\end{equation}
		From \cref{eq:Z_def}, using \cref{lem:dS2} with $(\tilde q, \tilde r) = (q_0, r_0)$, \cref{lem:phi_1_Pi}, \cref{lem:Pi}, and H\"older's inequality, we have
		\begin{align}\label{eq:Z1_est}
			\| Z_1 \|_{l^q_\tau(I; L^r)} 
			&\lesssim \| \vphi_1(i \tau \Delta) \Pi_\tau ( V_1 \Pi_\tau \psi(k\tau) - V_1 \psi^k_\tau) \|_{l_\tau^{q_1'}([0, (n-1)\tau]; L^{ r_1'})} \notag \\
			&\lesssim \| V_1 \|_{L^p} \| \Pi_\tau \psi(k\tau) - \psi^k_\tau \|_{l_\tau^{q_1'}([0, (n-1)\tau]; L^{r_0})} \notag \\
			&\leq |I|^{1 - \frac{1}{q_1} - \frac{1}{q_0}} \| V_1 \|_{L^{p}} \| \Pi_\tau \psi(k\tau) - \psi^k_\tau \|_{l_\tau^{q_0}(I; L^{r_0})}. 
		\end{align}
		Similarly, we have for $Z_2$, 
		\begin{align}\label{eq:Z2_est}
			\| Z_2 \|_{l^q_\tau(I; L^r)} 
			&\lesssim \| \vphi_1(i \tau \Delta) \Pi_\tau (V_2 \Pi_\tau \psi(k\tau) - V_2 \psi^k_\tau) \|_{l_\tau^1([0, (n-1)\tau]; L^2)} \notag \\
			&\leq \| V_2 \|_{L^\infty} \| \Pi_\tau \psi(k\tau) - \psi^k_\tau \|_{l_\tau^1([0, (n-1)\tau]; L^2)} \notag \\
			&\leq |I| \| V_2 \|_{L^\infty} \| \Pi_\tau \psi(k\tau) - \psi^k_\tau \|_{l^\infty_\tau(I; L^2)}. 
		\end{align}
		The combination of \cref{eq:Z1_est,eq:Z2_est} completes the proof. 
	\end{proof}
	
	\subsection{Proof of \texorpdfstring{\cref{thm:main1,thm:main2}}{Theorems 1.1 and 1.2}}
	With all the estimates above, we are able to obtain the error estimate of the EWI \cref{eq:EWI_scheme}, and complete the proof of \cref{thm:main1,thm:main2}. 
	\begin{proof}[Proof of \cref{thm:main1}]
		Let $T_\ast<T$ be defined such that 
		\begin{equation}\label{eq:Tast}
			C_0\left( |T_\ast|^{1 - \frac{1}{q_1} - \frac{1}{q_0}} + |T_\ast| \right) \leq \frac{1}{4}, 
		\end{equation}
		where $C_0$ is the constant in \cref{prop:Z}. 
		
		We first present the estimate of $e^n$ for $0 \leq n \leq T_\ast/\tau$. The estimate for general $0 \leq n \leq T/\tau$ can be easily obtained by dividing $[0, T]$ into several short intervals with length less than $T_\ast$ and repeating the same process on each subinterval (see the proof of Theorems 2.1 and 2.2 in \cite{bao2025_singular} for more details).  
		
		From \cref{eq:error_eq}, using \cref{eq:E0_est,prop:E1_pg2,prop:E2,prop:Z}, for $(q, r) \in \{(q_0, r_0), (\infty, 2)\}$ and $n \tau \leq T_\ast$, we have, recalling \cref{eq:Tast}, 
		\begin{align}\label{eq:error_est}
			\| e_\tau \|_{l_\tau^q([0, n\tau]; L^r)} 
			&\leq \| \mathcal{E}_0 \|_{l_\tau^q([0, n\tau]; L^r)} + \| \mathcal{E}_1 \|_{l_\tau^q([0, n\tau]; L^r)} + \| \mathcal{E}_2 \|_{l_\tau^q([0, n\tau]; L^r)} + \| Z \|_{l_\tau^q([0, n\tau]; L^r)} \notag \\
			&\leq \widetilde{C}_0 \| e^0_\tau \|_{L^2} + C \tau + C_0((n\tau)^{1 - \frac{1}{q_1} - \frac{1}{q_0}} + n \tau) \| e_\tau \|_{X_\tau([0, n\tau])} \notag \\
			&\leq  \widetilde{C}_0 \| e^0_\tau \|_{L^2} + C \tau + \frac{1}{4} \| e_\tau \|_{X_\tau([0, n\tau])}. 
		\end{align} 
		Taking $(q, r) = (q_0, r_0) \text{ and } (\infty, 2)$ in \cref{eq:error_est}, and summing together, we have
		\begin{equation}\label{err1}
			\| e_\tau \|_{X_\tau([0, n\tau])} \leq 2\widetilde{C}_0\| e^0_\tau \|_{L^2} + C\tau + \frac{1}{2} \| e_\tau \|_{X_\tau([0, n\tau])}, 
		\end{equation}
		which implies
		\begin{equation}\label{err2}
			\| e_\tau \|_{X_\tau([0, n\tau])} \leq 4\widetilde{C}_0\| e^0_\tau \|_{L^2} + C \tau.
		\end{equation}
		Inserting \cref{err2} into \cref{eq:error_est} yields that
		\begin{equation}\label{err3}
			\| e_\tau \|_{l_\tau^\infty([0, n\tau]; L^2)} \leq 2\widetilde{C}_0 \| e^0_\tau \|_{L^2} + C \tau \lesssim \tau.  
		\end{equation}
		Then we have established the error estimate for $0 \leq n \leq T_\ast/\tau$. The proof can be completed with the discussion at the beginning of the proof. 
	\end{proof}
	
	The proof of \cref{thm:main2} follows the same lines as the proof of \cref{thm:main1} by replacing \cref{prop:E1_pg2} with \cref{prop:E1_pl2}, and thus is omitted for brevity. 
	
	\section{Extension to the nonlinear case}\label{sec:nonlinear}
	All the estimates in \cref{sec:3,sec:4} are presented for the Schr\"odinger equation with $\beta = 0$ in \cref{NLSE}. In this section, we explain how to extend the main results \cref{thm:main1,thm:main2} to the nonlinear case with $\beta \neq 0$. 
	
	\subsection{Extension to the NLSE \cref{NLSE}}
	For the NLSE \cref{NLSE} with $\beta \neq 0$, there are additional terms incorporating the nonlinearity in $\mathcal{E}_1$, $\mathcal{E}_2$, and ${Z}$ in the error equation \cref{eq:error_eq}, as well as in the estimate of $R$ \cref{eq:R_def} when substituting \cref{NLSE} in \cref{eq:R_decomp_1}. In fact, all those additional terms appearing in the local truncation errors, i.e. $\mathcal{E}_1$ and $\mathcal{E}_2$ can be handled in a similar way as for $V_2$ since we have $\| \psi \|_{L^\infty([0, T]; L^\infty)} < \infty$ for the exact solution $\psi$. Only the term appearing in $Z^n$ \cref{eq:Z_def} needs to be handled differently (but is still standard) since we need the control of the numerical solution to conclude the error estimate for the nonlinear equation. When $\beta \neq 0$, the additional term incorporating the nonlinearity in $Z_n$ takes the form
	\begin{equation}
		Z_{\rm nonl}^n := - i \tau \beta \sum_{k=0}^{n-1} \Stau((n-1)\tau - k \tau) \vphi_1(i \tau \Delta) \Pi_\tau (|\Pi_\tau \psi(t_k)|^2 \Pi_\tau\psi(t_k) - |\psi^k_\tau|^2\psi^k_\tau). 
	\end{equation}
	By H\"older's inequality, \cref{lem:dS2,lem:phi_1_Pi}, for any admissible pairs $(q, r)$ with $q \neq 2$,  
	\begin{align}\label{eq:stability_cubic}
		&\| Z_{\rm nonl} \|_{l^q_\tau([0, n\tau]; L^r)} \notag \\
		&\lesssim (\| \psi \|_{L^\infty([0, (n-1)\tau]; L^{2p})}^2 + \| \psi_\tau \|_{l^\infty_\tau([0, (n-1)\tau]; L^{2p})}^2) \| \Pi_\tau \psi - \psi_\tau \|_{l_\tau^{q_1'}([0, (n-1)\tau]; L^{r_0})} \notag \\
		&\lesssim |(n-1)\tau|^{1- \frac{1}{q_1} - \frac{1}{q_0}} (\| \psi \|_{L^\infty([0, (n-1)\tau]; L^{2p})}^2 + \| \psi_\tau \|_{l^\infty_\tau([0, (n-1)\tau]; L^{2p})}^2) \| \Pi_\tau \psi - \psi_\tau \|_{l_\tau^{q_0}([0, (n-1)\tau]; L^{r_0})}. 
	\end{align}
	Hence, to control the growth of the nonlinearity, we need to control the $L^{2p}$-norm of the numerical solution. Note that, when $\tau \leq \tau_0$ with $\tau_0 > 0 $ sufficiently small depending on the exact solution $\psi$, using Sobolev embedding and \cref{lem:Pi}, 
	\begin{align}\label{eq:induction}
		\| \psi^{n}_\tau \|_{L^{2p}} 
		&\leq \| \psi(t_n) \|_{L^{2p}} + \| \psi(t_n) - \Pi_\tau \psi(t_n) \|_{L^{2p}} + \| \Pi_\tau \psi(t_n) - \psi^n_\tau \|_{L^{2p}} \notag \\
		&\leq \| \psi(t_n) \|_{L^{2p}} + \| \, |\nabla|^{\frac{d}{2}(1 - \frac{1}{p})} (I - \Pi_\tau) \psi(t_n) \|_{L^2} + \| \, |\nabla|^{\frac{d}{2}(1 - \frac{1}{p})} ( \Pi_\tau \psi(t_n) - \psi^n_\tau) \|_{L^{2}} \notag \\
		&\leq \| \psi(t_n) \|_{L^{2p}} + C N_\tau^{\frac{d}{2}(1-\frac{1}{p})} N_\tau^{-(2-\alpha)} \| \psi(t_n) \|_{H^{2-\alpha}} + C N_\tau^{\frac{d}{2}(1-\frac{1}{p})} \| \Pi_\tau \psi(t_n) - \psi^n_\tau \|_{L^{2}} \notag \\
		&\leq \| \psi(t_n) \|_{L^{2p}} + \frac{1}{2} + C \tau^{-\frac{d}{4}(1 - \frac{1}{p})} \tau^{\gamma(d, p)} \leq  \| \psi \|_{L^\infty([0, T]; L^{2p})} + 1, 
	\end{align}
	where $\gamma(d, p)$ is the obtained convergence order in \cref{thm:main1,thm:main2}, and we use that 
	\begin{equation*}
		\frac{d}{2}(1-\frac{1}{p}) \leq \frac{d}{2} < 2 - \alpha, \quad \gamma(d, p) > \frac{d}{4}(1 - \frac{1}{p}), \quad p \geq 1, \ p > \frac{d}{2}, \quad 1 \leq d \leq 3. 
	\end{equation*}
	This shows that at each time step, the $L^{2p}$-norm of the numerical solution can be controlled by the exact solution and the obtained error bound with inverse estimates. Then the error estimate can be completed using the standard induction arguments as in \cite{bao2025_singular}. 
	
	\subsection{Extension to other nonlinearity}
	We can also consider the nonlinear Schr\"odinger equation with a general power-type nonlinearity given by
	\begin{equation}\label{NLSE-general}
		i \partial_t \psi(\vx, t) = -\Delta \psi(\vx, t) + V(\vx) \psi(\vx, t) + \beta |\psi(\vx, t)|^{2\sigma}\psi(\vx, t), \quad \vx \in \R^d, \quad \sigma \in \R^+. 
	\end{equation}
	As is discussed in Section 1.2 of \cite{wu2025_singularhd} and Section 1.2 of \cite{wu2025_singular1d}, the NLSE \cref{NLSE-general} with the same singular potential $V$ satisfying \cref{eq:V_assumption} is still well-posed in $H^{2-\alpha}(\R^d)$ if $\sigma > (1-\alpha)/2$. Heuristically, the lower bound of $\alpha$ is required to ensure the nonlinear term $|\psi|^{2\sigma}\psi$ satisfies the basic regularity requirement: $|\psi|^{2\sigma}\psi \in H^{2-\alpha}$ if $\psi \in H^{2-\alpha}$. Then the same space-time estimates as in \cref{eq:regularity} hold for \cref{NLSE-general}, and we can extend \cref{thm:main1,thm:main2} to \cref{NLSE-general}, under additional assumtions on the nonlinearity $\sigma$ as shown below. 
	
	For the error estimate of the EWI applied to \cref{NLSE-general}, we have the following result: \cref{thm:main1,thm:main2} hold for the NLSE \cref{NLSE-general} if
	\begin{equation}\label{eq:sigma}
		\frac{1-\alpha}{2} < \sigma \left\{
		\begin{aligned}
			&<\infty, && d=1, \ p \geq 1 \text{ or } d=2, \  p \geq \frac{4}{3} \text{ or } d= 3, \  p \geq \frac{9}{5}, \\
			&< \frac{3}{4 - 3p}, && d=2, \  1 < p< \frac{4}{3}, \\
			&< \frac{3}{18 - 10p}, && d= 3, \   \frac{3}{2} < p < \frac{9}{5}.  
		\end{aligned}
		\right.
	\end{equation}
	
	In the following, we briefly explain how to obtain this result. Similar to the cubic nonlinearity case, the estimates related to the local truncation errors, i.e., $\mathcal{E}_1$ and $\mathcal{E}_2$ in \cref{eq:error_eq} remain similar when $\sigma > (1-\alpha)/2$, and only the stability estimate for $Z^n$ \cref{eq:Z_def} need to be modified. We consider two cases: 
	\begin{enumerate}[(i)]
		\item  When $d=1$, or $d=2$, $p \geq 4/3$, or $d=3$, $p \geq 9/5$, the convergence order in $L^2$-norm is of order $O(\tau^{\gamma(d, p)})$ where $\gamma(d, p)$ is the corresponding order obtained in \cref{thm:main1,thm:main2} and we verify that $\gamma(d, p)>d/4$. This thus allows the use of inverse estimates to control the $L^\infty$-norm of the numerical solution. In this case, \cref{thm:main1,thm:main2} hold for any $\sigma > (1-\alpha)/2$. 
		
		\item When $d=2$, $1 < p < 4/3$ or $d=3$, $3/2<p<9/5$, we further consider three regimes: 
		\begin{enumerate}[--]
			\item If $\sigma > \frac{1}{p}$, we can have an analog of \cref{eq:stability_cubic,eq:induction} by replacing $2p$ with $2 \sigma p$. Then we need in addition
			\begin{equation*}
				\gamma(d, p) > \frac{d}{4} ( 1 - \frac{1}{\sigma p})  \impliedby \left\{
				\begin{aligned}
					&\sigma < \frac{1}{4-3p}, &&d=2, \\
					&\sigma < \frac{3}{18 - 10p}, && d = 3.
				\end{aligned}
				\right. 
			\end{equation*}
			
			\item If $\frac{1}{d} \leq \sigma \leq \frac{1}{p}$, \cref{eq:stability_cubic} should be adapted by
			\begin{align*}\label{eq:stability_power}
				&\| Z_{\rm nonl} \|_{l^q_\tau([0, n\tau]; L^r)} \\
				&\lesssim (\| \psi \|_{L^\infty([0, (n-1)\tau]; L^{2})}^{2\sigma} + \| \psi_\tau \|_{l^\infty_\tau([0, (n-1)\tau]; L^{2})}^{2\sigma}) \| \Pi_\tau \psi - \psi_\tau \|_{l_\tau^{q_1'}([0, (n-1)\tau]; L^{\tilde r_0})} \\
				&\lesssim |(n-1)\tau|^{1- \frac{1}{q_1} - \frac{1}{\tilde q_0}} (\| \psi \|_{L^\infty([0, (n-1)\tau]; L^{2})}^{2\sigma} + \| \psi_\tau \|_{l^\infty_\tau([0, (n-1)\tau]; L^{2})}^{2\sigma}) \| \Pi_\tau \psi - \psi_\tau \|_{l_\tau^{\tilde q_0}([0, (n-1)\tau]; L^{\tilde r_0})},  
			\end{align*}
			where $(\tilde q_0, \tilde r_0)$ is an admissible pair with $\tilde r_0$ determined by
			\begin{equation*}
				\frac{1}{\tilde r_0} + \frac{1}{r_1} + \sigma = 1. 
			\end{equation*}
			Since $\frac{1}{d} \leq \sigma \leq \frac{1}{p}$, we have $2 < \tilde r_0 \leq r_0$, and, for some $0 < \beta \leq 1$, 
			\begin{equation*}
				\frac{1}{\tilde r_0} = \frac{1 - \beta}{2} + \frac{\beta}{r_0}, \quad \frac{1}{\tilde q_0} = \frac{1-\beta}{\infty} + \frac{\beta}{q_0},  
			\end{equation*}
			which implies, by interpolation, 
			\begin{align}\label{eq:interpolation}
				\| \Pi_\tau \psi - \psi_\tau \|_{l_\tau^{\tilde q_0}([0, (n-1)\tau]; L^{\tilde r_0})} 
				&\leq \| \Pi_\tau \psi - \psi_\tau \|_{l_\tau^{\infty}([0, (n-1)\tau]; L^{2})}^{1-\beta} \| \Pi_\tau \psi - \psi_\tau \|_{l_\tau^{q_0}([0, (n-1)\tau]; L^{r_0})}^\beta \notag \\
				&\leq \| \Pi_\tau \psi - \psi_\tau \|_{X_\tau([0, (n-1)\tau])}.  
			\end{align}
			Hence, it suffices to establish a uniform $L^2$-norm bound of the numerical solution, which is a direct result of the $L^2$-norm error bound using an induction argument similar to \cref{eq:induction}. 
			
			\item If $\frac{1-\alpha}{2} < \sigma < \frac{1}{d}$, we modify \cref{eq:stability_cubic} as follows: choosing $(\tilde q, \tilde r) = (\frac{1}{\sigma}, \frac{2}{1-2\sigma})$ when $d=2$, and $(\tilde q, \tilde r) = (4, 3)$ when $d=3$ in \cref{lem:dS2}, we have , for any $(q, r)$ admissible, 
			\begin{align*}\label{eq:stability_power_2}
				&\| Z_{\rm nonl} \|_{l^q_\tau([0, n\tau]; L^r)} \\
				&\lesssim (\| \psi \|_{L^\infty([0, (n-1)\tau]; L^{2})}^{2\sigma} + \| \psi_\tau \|_{l^\infty_\tau([0, (n-1)\tau]; L^{2})}^{2\sigma}) \| \Pi_\tau \psi - \psi_\tau \|_{l_\tau^{\tilde q'}([0, (n-1)\tau]; L^{\tilde r_0})} \\
				&\lesssim |(n-1)\tau|^{1- \frac{1}{\tilde q} - \frac{1}{\tilde q_0}} (\| \psi \|_{L^\infty([0, (n-1)\tau]; L^{2})}^{2\sigma} + \| \psi_\tau \|_{l^\infty_\tau([0, (n-1)\tau]; L^{2})}^{2\sigma}) \| \Pi_\tau \psi - \psi_\tau \|_{l_\tau^{\tilde q_0}([0, (n-1)\tau]; L^{\tilde r_0})},  
			\end{align*}
			where $(\tilde q_0, \tilde r_0)$ is an admissible pair with $\tilde r_0$ satisfying
			\begin{equation*}
				\frac{1}{\tilde r_0} + \frac{1}{\tilde r} + \sigma = 1. 
			\end{equation*}
			It can be checked that $2 \leq \tilde r_0 \leq r_0$. Using \cref{eq:interpolation}, again, we only need to establish a uniform $L^2$-norm bound of the numerical solution, which can be obtained by the $L^2$-norm error bound and an induction argument as before. 
		\end{enumerate}
	\end{enumerate}
	Then the extension of \cref{thm:main1,thm:main2} to \cref{NLSE-general} under \cref{eq:sigma} is completed. 
	
	\section{Numerical results}\label{sec:numer}
	In this section, we present numerical results to verify the error estimates in \cref{thm:main1,thm:main2} for the EWI \cref{eq:EWI_scheme} applied to NLSE \cref{NLSE} with singular potentials in 1D, 2D, and 3D. 
	
	
	To implement the EWI, we choose a computation domain $\Omega = (-L, L)^d$ with $L$ large enough such that the truncation error is negligible. On the bounded domain $\Omega$, we impose the periodic boundary conditions and use the standard Fourier spectral method for the spatial discretization. We define the following error functions:
	\begin{equation*}
		e_{L^2}(t_n) = \| \psi(t_n) - \psi^n \|_{L^2}, \qquad e_{H^1}(t_n) = \| \psi(t_n) - \psi^n \|_{H^1}, \quad 0 \leq n \leq T/\tau. 
	\end{equation*}
	
	We first consider the 1D case and choose $\Omega = (-16, 16)$ and $\beta = -1$. As the optimal first-order convergence in the $L^2$-norm has already been verified for $L^2$-potentials in \cite{bao2025_singular}, we focus on more singular potentials in $L^p$ with $1 \leq p < 2$. As a typical example, we consider the (periodic) Dirac delta potential in 1D:
	\begin{equation}\label{eq:delta}
		\delta(x) = \frac{1}{2L} \sum_{l \in \mathbb{Z}} e^{i \mu_l x}, \quad \mu_l = \frac{\pi l}{L}, \quad x \in \Omega. 
	\end{equation}
	Although \cref{eq:delta} is not an $L^1$ function, it is the limit of $L^1$-functions, and we (roughly) have $\alpha = \frac{1}{2}$. In the computation, we approximate $-\delta(x)$ by a truncated Fourier series
	\begin{equation}\label{eq:V_delta}
		V(x) = -\frac{1}{2L} \sum_{|l| \leq N_\text{ref}} e^{i \mu_l x}, \quad x \in \Omega,  
	\end{equation}
	where $N_\text{ref} = 2^{16}$ is chosen much larger than the spatial degree of freedom. We use the following stationary solution of the NLSE with Dirac delta potential in $\R$ as the initial data:
	\begin{equation*}
		\psi_0(x) = \sqrt{2}
		\operatorname{sech}\left(|x|+\operatorname{arctanh}\left(\frac{1}{2}\right)\right), \quad x \in \Omega.
	\end{equation*} 
	The exact solution is given by $\psi(x, t) = e^{i t} \psi_0(x)$. Note that $\Omega$ is chosen large enough such that the domain truncation error can be ignored. 
	
	In the numerical simulation, we fix the mesh size $h = 2^{-9}$ and compute up to $T=1$. The errors in $L^2$- and $H^1$-norms are shown in \cref{fig:conv_dt_1D2D} (a). We observe that the $L^2$-norm convergence is of half order, which agrees with the error estimate in \cref{thm:main2} very well, indicating that the error bound is sharp even for highly singular potentials in 1D. This observation also confirms that the dominant error in 1D comes from the spatial discretization (filteration) rather than the temporal discretization. 
	
	\begin{figure}[htbp]
		\centering
		{\includegraphics[width=0.475\textwidth]{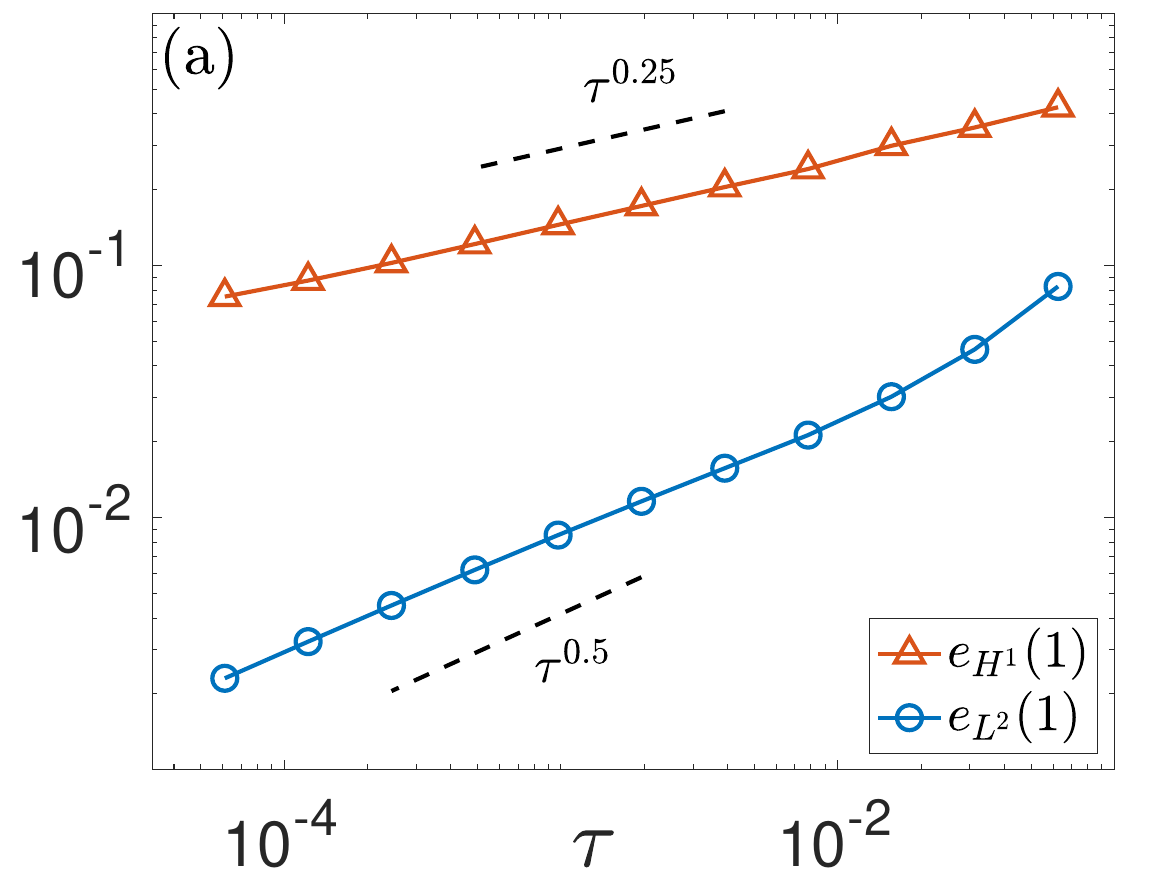}}\hspace{1em}
		{\includegraphics[width=0.475\textwidth]{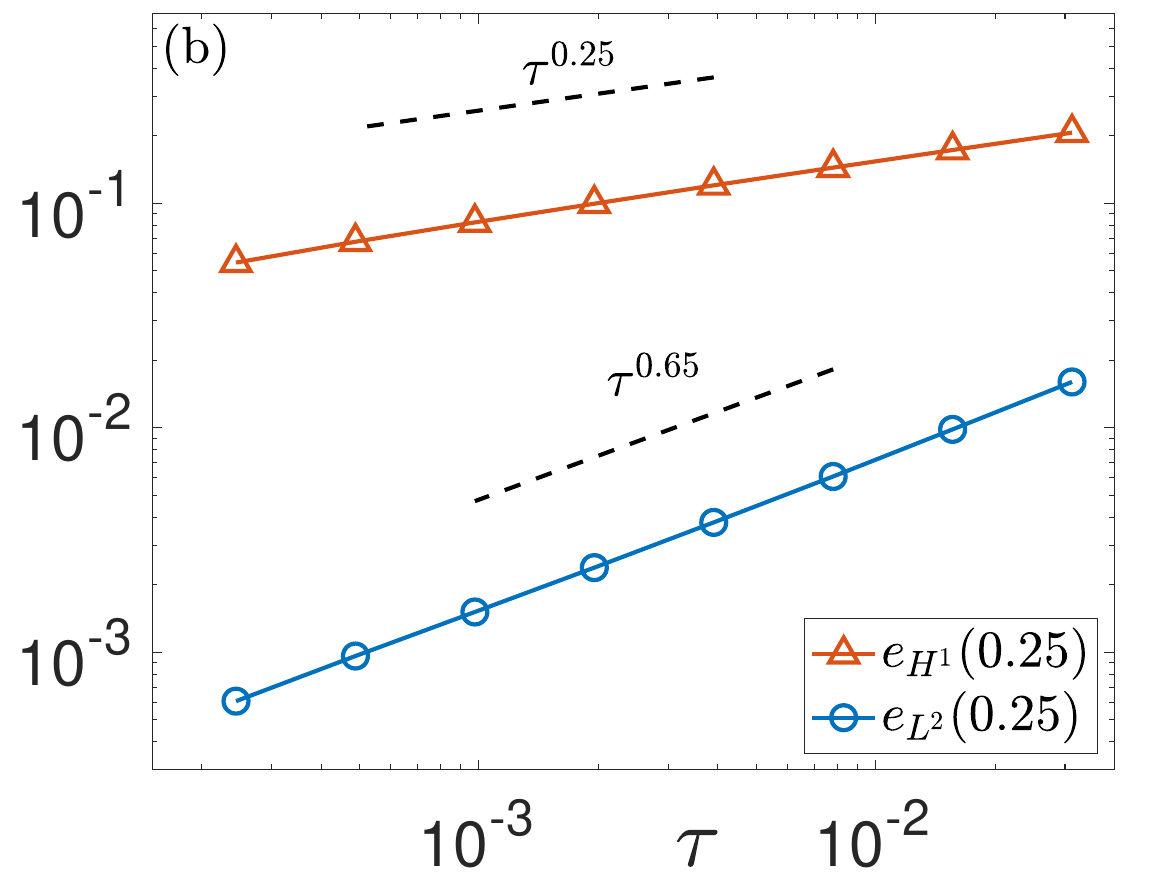}}
		\caption{Errors in $L^2$- and $H^1$-norms of the EWI for the NLSE \cref{NLSE}: (a) 1D case, and (b) 2D case}
		\label{fig:conv_dt_1D2D}
	\end{figure}
	
	Next we consider the 2D case with $\Omega = (-8, 8)^2$ and $\beta = 0$. Again, we focus on highly singular potentials $V \in L^p$ with $1 < p <2$ as the $L^2$ case has been treated in \cite{bao2025_singular}. We define the singular potential in the Fourier space by (with $L=8$)
	\begin{equation}\label{eq:V_2D}
		V(\vx) = -\frac{1}{(2L)^2} \sum_{\mathbf 0 \neq \mathbf l \in \mathbb{Z}^2} |\mu_\textbf{l}|^{-\frac{1}{2}} e^{i \mu_\textbf{l}\cdot \vx}, \quad \mu_{\mathbf l} = \frac{\pi \mathbf l}{L}, \quad \vx \in \Omega.
	\end{equation}
	Recall that the Fourier transform of $|\vx|^{-\frac{3}{2}}$ is given by $C|\vxi|^{-\frac{1}{2}}$. Thus, near the origin, $V(\vx) \sim |\vx|^{-\frac{3}{2}}$, and we have $V \in L^{\frac{4}{3}^-}$ and  $\alpha = \frac{1}{2}^+$. 
	The initial data is chosen as a standard Gaussian with $\psi_0(\vx) = e^{- \frac{|\vx|^2}{2}}$. In the numerical simulation, we choose the final time $T = 0.25$ and the mesh size $h = 2^{-8}$ in each direction. The reference solution is obtained using a time step size $\tau = 10^{-5}$. The numerical result is demonstrated in \cref{fig:conv_dt_1D2D} (b). The observed convergence rate in the $L^2$-norm is about $0.65$ order, which is slightly higher than the half-order convergence proved in \cref{thm:main2}. It remains unclear whether the error bound in \cref{thm:main2} is optimal for highly singular potentials in 2D. On the other hand, the observed $0.65$-order rate is still slower than the temporal convergence rate proved in \cref{prop:E2}, which is of $0.75$ order. This indicates again that the overall error is still dominated by the spatial filteration error (i.e. \cref{eq:E1}). 
	
	Finally, we consider the 3D case with $\Omega = (-8, 8)^3$ and $\beta = 1$. The singular potential is chosen similarly to \cref{eq:V_2D} by (with $L=8$)
	\begin{equation}\label{eq:V_3D}
		V(\vx) = -\frac{1}{(2L)^3} \sum_{\mathbf 0 \neq \mathbf l \in \mathbb{Z}^3} |\mu_\textbf{l}|^{-\gamma} e^{i \mu_\textbf{l}\cdot \vx}, \quad \mu_{\mathbf l} = \frac{\pi \mathbf l}{L}, \quad \vx \in \Omega, \quad \gamma > 0. 
	\end{equation}
	We consider two cases: (i) an $L^{2^-}$-potential with $\gamma = 3/2$ in \cref{eq:V_3D} and $\alpha = 0^+$, and (ii) an $L^{\frac{12}{7}^-}$-potential with $\gamma = \frac{5}{4}$ in \cref{eq:V_3D} and $\alpha = {\frac{1}{4}}^+$. In both cases, we take the standard Gaussian initial datum $\psi_0(\vx) = e^{- \frac{|\vx|^2}{2}}$ and choose the final time $T = 0.125$. The mesh size is fixed as $h = 2^{-5}$ in each direction, and the reference solution is obtained with a time step size $\tau = 10^{-4}$. The numerical results are presented in \cref{fig:conv_dt_3D} (a) and (b) for the two cases, respectively. From \cref{fig:conv_dt_3D} (a), we see that under $L^2$-potential, the $L^2$-norm convergence is first-order, which verifies our error estimates in \cref{thm:main1,thm:main2}. For more singular $L^\frac{12}{7}$-potential, the observed convergence order is (higher than) 0.65 order, which is also higher than the predicted $\frac{5}{8}$ order in \cref{thm:main2}. We remark that the convergence order for highly singular $L^\frac{12}{7}$-potential shown in \cref{fig:conv_dt_3D} (b) does not yet appear fully stable. However, the simulation for smaller time steps also requires refining the spatial mesh (due to the filter $\Pi_\tau$), which thus requires substantial memory cost for this 3D test problem. Therefore, the sharpness of the error bound in \cref{thm:main2} for 3D highly singular potential remains unclear and hard to verify numerically. 
	\begin{figure}[htbp]
		\centering
		{\includegraphics[width=0.475\textwidth]{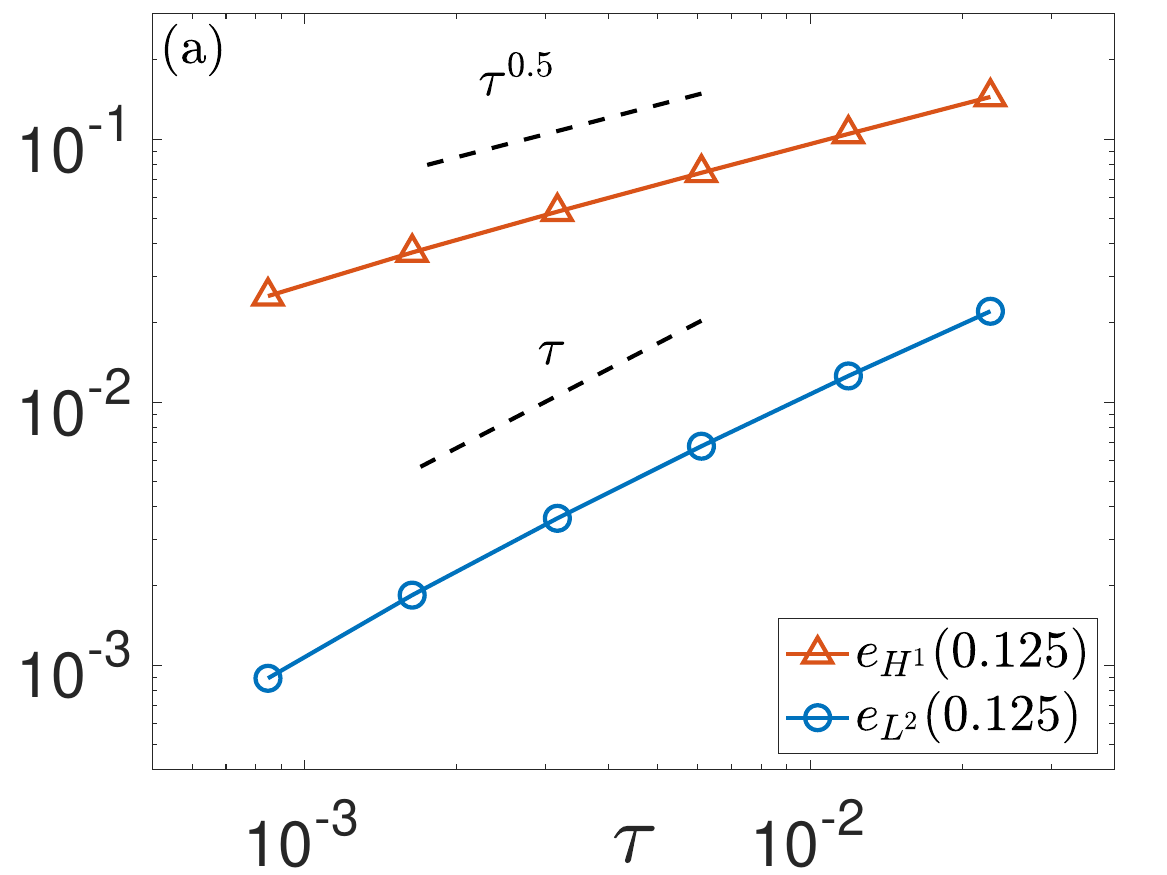}}\hspace{1em}
		{\includegraphics[width=0.475\textwidth]{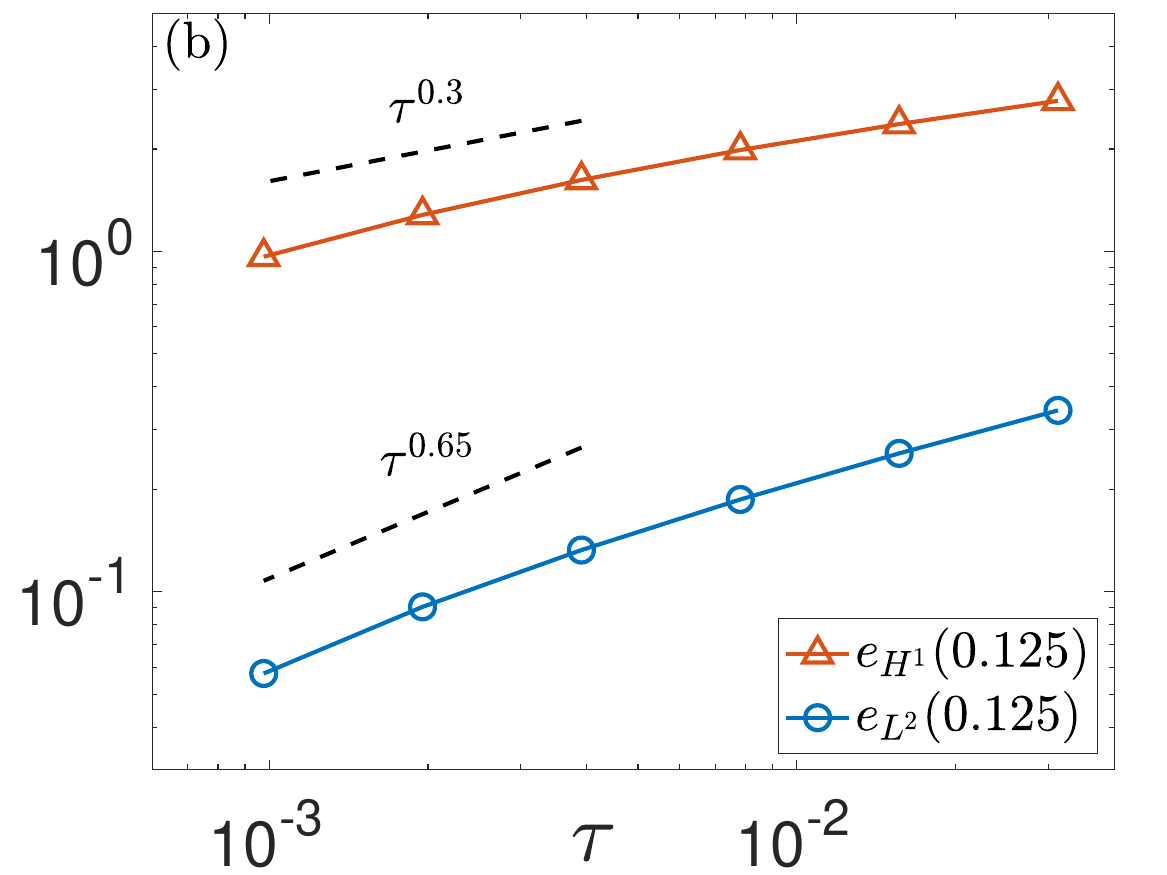}}
		\caption{Errors in $L^2$- and $H^1$-norms of the EWI for the NLSE \cref{NLSE}: (a) $L^{2^-}$-potential, and (b) $L^{\frac{12}{7}^-}$-potential}
		\label{fig:conv_dt_3D}
	\end{figure}
	
	\section{Conclusion}\label{sec:conclusion}
	We established error estimates for a filtered exponential wave integrator (EWI) for the nonlinear Schr\"odinger equation with highly singular potentials in $L^p(\R^d) + L^\infty(R^d)$ with $p > \frac{d}{2}$, $p \geq 1$, with the spatial dimension $d=1, 2, 3$. Our error estimate has pushed the regularity requirement on the potential to the threshold regularity such that the NLSE remains well-posed. For $L^{2^+}_\text{loc}$-potentials, we obtained an optimal first-order $L^2$-norm error bound in all three dimensions $d=1,2,3$. The optimality is two-fold: (i) in the presence of an $L^2$-potential, the first-order $L^2$-norm convergence of the EWI is optimal, and (ii) to achieve first-order $L^2$-norm convergence of the EWI, $L^2$-regularity of the potential is the minimal regularity required. Furthermore, we extended the analysis to more singular potentials with $\frac{d}{2}<p\leq2$ (and $p \geq 1$), and obtain (roughly) $(1 - \alpha)$-order convergence in one and two dimensions, and $(1 - \frac{3}{2}\alpha)$-order convergence in three dimensions, where $\alpha := d(\frac{1}{p} - \frac{1}{2})$ if $p >1$ and $\alpha := \frac{1}{2}^+$ if $p=1, d=1$. Extensive numerical results are reported to verify the error estimates and show the optimality of the error bound.  
	
	\section*{Acknowledgment}
	This work was partially supported by the Ministry of Education of Singapore under its AcRF Tier 1 funding grant A-8003584-00-00 (W. Bao). This work was supported in part by the National Natural Science Foundation of China Grant No. 12494544 (Y. Wu).
	

\end{document}